\documentclass[journal]{IEEEtran}
\usepackage{graphicx}
\usepackage{siunitx}
\usepackage{tikz,tikzscale,tikz-cd,circuitikz}
\usepackage{pgfplots}
\usepackage{setspace}

\usetikzlibrary{shapes,arrows,calc,fit}

\tikzstyle{block} = [draw=none,rectangle,minimum height=1.75em,minimum width=6em]
\tikzstyle{gain} = [draw,rectangle,minimum height=1em,minimum width=1em]
\tikzstyle{sum} = [draw,circle,node distance=0.5cm]
\tikzstyle{signal} = [coordinate]
\tikzstyle{pinstyle} = [pin edge={to-,thin,black}]

\usepackage{pgfplots}
\pgfplotsset{compat=newest,every axis/.append style={
                    label style={font=\footnotesize},
                    tick label style={font=\footnotesize}  
                    }}

\usepgfplotslibrary{fillbetween}

\usepackage{cite}

\ifCLASSINFOpdf
\else
\fi
\ifCLASSOPTIONcompsoc
  \usepackage[caption=false,font=normalsize,labelfont=sf,textfont=sf]{subfig}
\else
  \usepackage[caption=false,font=footnotesize]{subfig}
\fi
\usepackage{url}

\usepackage{amsmath}
\usepackage{amsmath,amssymb,ifthen,color}
\usepackage{rgsMacros,rgsEnvironments}

\usepackage{enumitem}

\usepackage{hyperref}

\newcommand{\interior}{\mathop{\rm int}\nolimits}
\newcommand{\closure}{\mathop{\rm cl}\nolimits}
\newcommand{\Anorm}[1]{\left|#1\right|_{\cal A}}

\renewcommand{\T}{{\cal T}}
\newcommand{\natspplus}{\nats_{>0}}
\newcommand{\xinitial}{x_{\circ}}

\usepackage{soul}

\usepackage{ifthen}
\newboolean{Report}
\newboolean{Journal}
\setboolean{Report}{true}  
\newcommand{\NotForReport}[1]{\ifthenelse{\boolean{Report}}{}{#1}}
\newcommand{\IfReport}[2]{\ifthenelse{\boolean{Report}}{#1}{{\color{black}#2}}}

\setboolean{Journal}{false}  
\newcommand{\NotForJournal}[1]{\ifthenelse{\boolean{Journal}}{}{{\color{black}#1}}}
\newcommand{\IfJournal}[2]{\ifthenelse{\boolean{Journal}}{#1}{#2}}

\newcommand{\ubar}[1]{\bar{#1}}
\usepackage{algorithm,algpseudocode}

\newcommand{\XinitialSet}{X_\circ}
\usepackage[normalem]{ulem}

\begin{document}
%
\title{Model Predictive Control of Hybrid Dynamical Systems}
%
%

\author{Ricardo~G.~Sanfelice,~\IEEEmembership{Fellow,~IEEE}, and~Berk~Alt{\i}n,~\IEEEmembership{Member,~IEEE}
\thanks{R. G. Sanfelice and B. Alt{\i}n are with the Department
of Electrical and Computer Engineering, University of California, Santa Cruz,
CA, 94107 USA e-mail: \texttt{ricardo@ucsc.edu}, \texttt{berkaltin@ucsc.edu}. 
Research funded in part by Research by R. G. Sanfelice partially supported by NSF Grants no. CNS-2039054 and CNS-2111688, by AFOSR Grants nos. FA9550-23-1-0145, FA9550-23-1-0313, and FA9550-23-1-0678, by AFRL Grant nos. FA8651-22-1-0017 and FA8651-23-1-0004, by ARO Grant no. W911NF-20-1-0253, by DoD Grant no. W911NF-23-1-0158, and by UC Alianza MX Grant no. SRPUCMX24-01.}
}

%
%

\markboth{IEEE Transactions on Automatic Control,~Vol.~0, No.~0, July~2026}%
{Alt{\i}n and Sanfelice: Model Predictive Control of Hybrid Dynamical Systems}
%



\maketitle

\begin{abstract}
The problem of controlling hybrid dynamical systems
using model predictive control (MPC) is formulated and sufficient conditions
for asymptotic stability of a set are provided.
Hybrid dynamical systems are modeled in terms of hybrid equations, involving
a differential equation and a difference equation with inputs and constraints.
The proposed hybrid MPC algorithm uses a suitable prediction and control horizon construction inspired by hybrid time domains.
Structural properties of the hybrid optimization problem,
its feasible set, and its value function are provided. 
Checkable conditions to guarantee asymptotic stability of a set are provided. 
These conditions are given in terms of 
properties on the stage cost, terminal cost, and the existence of static state-feedback laws, related through a control Lyapunov function condition.  Examples illustrate the results throughout the paper.
\end{abstract} 


%
\IEEEpeerreviewmaketitle


\section{Introduction}
\label{sec:intro}


\subsection{Background and Motivation}

Model Predictive Control (MPC) has established itself as a versatile and powerful feedback strategy for constrained control of dynamic systems, with extensive application across industries.
Since its conception as a solution to control problems in emerging industry back in the late 1970s/early 1980s \cite{Richalet.ea.76.IFAC,Cutler.Ramaker.80.ACC},
the iterative, optimization-based algorithmic structure of MPC has appealed to practitioners aiming at solving  myriad of control problems.\footnote{Some of the history of MPC is reported in \cite{Morari.Lee.99.CCE} and \cite{Qin.Badgwell.03.CEP}.}
MPC is a receding horizon control scheme that 
optimizes the selection of the control inputs
through the use of a dynamic model of the process to predict the effect of future control actions.
Without attempting to survey such a broad literature,
MPC schemes for linear and nonlinear systems have undergone substantial development, leading to widespread adoption and rigorous theoretical foundations~\cite{Rawlings.Mayne.09,Borrelli.ea.17,GrunePannek2017}. Early continuous-time MPC strategies, such as~\cite{Mayne.ea.90.TAC,Chen.Allgower.98.Automatica}, illustrate how continuous-time dynamics can be incorporated into  MPC, often through discretization or continuous-time optimization formulations.

Within the broader context of MPC, the term ``hybrid MPC'' typically refers to control of systems that feature both continuous and discrete elements, such as logic-based constraints or mixed-valued states and inputs,
or that have a discontinuous right-hand side due to the process dynamics or the control algorithm being nonsmooth~\cite{maynesurvey,camacho, Borrelli.ea.17}. 
A well-known foundational framework for such systems was developed in~\cite{BemporadMorari1999}, where the authors propose mixed logical dynamical (MLD) models and corresponding mixed-integer MPC formulations. 
These methods address discrete modes and logic rules, and have influenced a large body of work on hybrid MPC~\cite{Bemporad2002,Bemporad2006,Borrelli.ea.17,camacho,maynesurvey}.  
However, following \cite{154}, the use of the term ``hybrid'' in this paper aligns better with the definition introduced in the hybrid systems community \cite{vanderSchaftSchumacher00,AubinLygerosQuincampoixSastrySeube02,65,220}. 
These models capture hybrid behavior beyond discrete logic and variables, including impulsive effects, resets, switching, and asynchronous updates, which arise naturally in applications such as networked systems, robotics, and cyber-physical systems.

Several works have addressed model predictive control for classes of systems exhibiting hybrid behaviors, with emphasis on switched and impulsive dynamics. For example, \cite{mhaskar2008robust}  presents a robust MPC formulation for switched systems under uncertain switching schedules, ensuring constraint satisfaction through conservative design techniques. In \cite{sopasakis}, MPC schemes for linear impulsive systems, addressing the discrete-time occurrence of impulses and demonstrating stability under specific assumptions, are developed. The work in \cite{pereira} proposes a general optimization-based control framework for impulsive systems, focusing on trajectory generation and feasibility. In \cite{muller2012model}, the authors  investigate switched nonlinear systems and derive conditions under which MPC guarantees stability assuming an average dwell-time condition. Similarly, \cite{colaneri2007robust} studies robust MPC for discrete-time switched systems, considering uncertainties and constraints. As a difference to settings based on MLD models, mixed-valued states and inputs, or nonsmooth dynamics, this paper employs the framework for hybrid control systems in \cite{220}, in which a hybrid plant is described by the interaction of continuous-time dynamics and discrete-time transitions governed by a unified set-theoretic formalism. 
Specifically, a {\em hybrid plant} is given by
\begin{equation}
	\HS : 
	\left\{
	\begin{aligned}
		\dot{x}		&= f(x,u)		& (x,u)&\in C\\
		x^+				&= g(x,u)		& (x,u)&\in D
	\end{aligned}
	\right.
	\label{eqn:Hp}
\end{equation}
where $x$ is the state, $u$ is the control input, $C$ is the flow set,
$f$ is the flow map,
$D$ is the jump set,
and 
$g$ is the jump map.
This formalism encapsulates numerous models of a hybrid nature already studied in the context of MPC, sample-and-hold control \cite{findeisen2006nonlinear,magni}, event/self-triggered control \cite{Tabuada.07}, and switching systems \cite{mhaskar2005predictive}, which, very importantly, as stated in \cite[Section 2.2.5]{maynesurvey}, ``it is bound to influence further research on hybrid MPC.'' 

The primary objective of this paper is to lay down the theoretical foundations of a MPC framework for hybrid dynamical systems, with the meaning of ``hybrid'' as clarified above. In contrast to the aforementioned works, the present approach directly incorporates hybrid time domains and trajectory evolution into the prediction and constraint structure, allowing for general, continuous-time hybrid dynamics beyond piecewise-affine, impulsive, or switched-only systems.
Although modern MPC developments, such as economic and dissipativity-based MPC, turnpike phenomena, and formulations without terminal ingredients, offer powerful alternatives in continuous settings, their extension to hybrid systems is nontrivial and remains an open research direction. This paper focuses on transferring and formalizing early MPC principles in the hybrid context, providing a foundation for future integration of these newer techniques.

\subsection{Contributions}

Although the concept of applying MPC to hybrid systems has existed in literature for several decades, an MPC framework for hybrid plants of the form \eqref{eqn:Hp} is not yet available. 
This paper provides a comprehensive presentation of  
hybrid MPC for such systems, and formally introduces the structural conditions and theoretical tools needed to ensure:
\begin{enumerate}
\item
Appropriate structure and properties of the hybrid prediction horizon;
\item Forward invariance of the terminal constraint set 
and of the feasible set (Proposition~\ref{prop:aroundA} and Proposition~\ref{prop:T1T2});
\item 
That the value function serves as a Lyapunov function via bounds in terms of the terminal and stage costs (Lemma~\ref{lem:valuecont} and Lemma~\ref{lem:descent});
\item 
Positive
definiteness of the value function (multiple results in Section~\ref{sec:posdef} and Section~\ref{sec:posdef-Sufficient});
\item Asymptotic stability of a closed set via hybrid MPC, including special cases when jumps or flows are persistent (Theorem~\ref{thm:AS-MPC});
\item Two examples showing in detail how to apply the results.
\end{enumerate}
The results are illustrated in two examples throughout the paper, one pertaining to control of a bouncing ball and another one about sample-and-hold control.  More details and an additional example can be found in \cite{HybridMPCTR}. The overall framework was first outlined in the conference paper \cite{179}, which 
informally presents an initial version of the framework presented here but
does not include any mathematical result.
The conference paper \cite{193} presents a preliminary version of the
basic assumptions. These assumptions are relaxed here; in particular, the set to asymptotically stabilize does no longer
need to be compact, expanding the applicability of our results to a broader class of 
stabilization problems (e.g., consensus, tracking, and estimation),
and a condition relating the terminal constraint set and the set to asymptotically
stabilize is removed (see (O4) in \cite{193}).  
A result revealing properties of the value function was stated without proof in \cite[Lemma 5.4 and Lemma 5.5]{193}, but significantly improved here, under weaker assumptions (cf. Lemma~\ref{lem:valuecont} and Lemma~\ref{lem:descent}) and allowing the stage costs and the terminal cost to be not be positive definite. 
The latter results can be found in Section~\ref{sec:posdef} and Section~\ref{sec:posdef-Sufficient},
which further extend the relaxations in the conference paper \cite{228} -- in fact, only a version
of items~\ref{item2-pd}  and \ref{item3-pd} in Theorem~\ref{thm:PDofJ} are considered therein, 
which are further relaxed in those sections.  Note that \cite{193} and \cite{228} 
do not include i) any mathematical proofs, ii) the intermediate results required to establish the properties of the value function and asymptotic stability, and iii) the many illustrations via examples, given by two examples revisited throughout the paper multiple times.

\subsection{Organization of the Paper}

\IfJournal{
Section~\ref{sec:background} introduces notation and basic concepts of
 hybrid systems. 
Section~\ref{sec:overview} presents an overview of hybrid MPC and its main ingredients and Section~\ref{sec:assumptions} introduces required assumptions.
Section~\ref{sec:ocp} reveals the main properties of the 
constrained optimal control problem associated with hybrid MPC,
Section~\ref{sec:posdef} characterizes properties of the value function, and 
Section~\ref{sec:stability} establishes asymptotic stability.
Section~\ref{sec:examples} presents implementations of hybrid MPC in examples.
}
{
Section~\ref{sec:background} introduces notation and basic concepts of
 hybrid systems. 
Section~\ref{sec:overview} presents an overview of hybrid MPC and its main ingredients: 
the cost functional, the prediction horizon, the constrained optimal control problem,
and the recursive implementation of hybrid MPC.
The assumptions required for hybrid MPC to be asymptotically stabilizing
are presented in Section~\ref{sec:assumptions}.
Section~\ref{sec:ocp} reveals the main properties of the 
constrained optimal control problem associated with hybrid MPC.
Section~\ref{sec:posdef} characterizes properties guaranteeing positive definiteness of the value function.
Section~\ref{sec:stability} formalizes the asymptotic stabilization properties of hybrid MPC.
Section~\ref{sec:examples} presents  examples to which the main results
are applied.
Open problems and future work are discussed in Section~\ref{sec:conclusion}.
}

\section{Background on Hybrid Control Systems}
\label{sec:background}

Throughout the paper, we use~$\reals$ to represent real numbers and~$\realsgeq$ its nonnegative subset. The set of nonnegative integers is denoted~$\naturals$. The notation~$S_1\subset S_2$ indicates~$S_1$ is a subset of~$S_2$, not necessarily proper. The 2-norm is denoted~$|\cdot|$. The distance of a vector~$x\in\reals^n$ to a nonempty set~$\A\subset\reals^n$ is denoted~${\Anorm{x}:=\inf_{a\in\A}|x-a|}$. We denote by~$\A+\delta\ball$ the set of all~$x\in\reals^n$ such that $|x-a|\leq\delta$ for some $a\in\A$. The interior and closure of a set~${S\subset\reals^n}$ are denoted~${\interior S}$ and~${\closure S}$, respectively. We denote by~$\Pi:\reals^{n}\times\reals^{m}\to\reals^{n}$ the standard projection onto~$\reals^{n}$ such that~$\Pi(x,y)=x$. A strictly increasing continuous function~$\alpha:\realsgeq\to\realsgeq$ with $\alpha(0)=0$ is said to be a class-$\classK$ function. An unbounded class-$\classK$ function is said to be a class-$\classK_{\infty}$ function.

\subsection{Hybrid Control Systems}

This paper focuses on the control of hybrid plants given as $\HS$
 in Section~\ref{sec:intro},
where~$x\in\reals^n$ is the state and~$u\in\reals^m$ is the control input. The data of the hybrid plant $\HS$ is given by\begin{itemize}
\item the \textit{flow set}~$C\subset\reals^n\times\reals^m$; 
\item the \textit{flow map}~${f:C\to\reals^n}$ describes the continuous evolution of~$x$ when~$(x,u)$ belongs to  $C$;
\item the \textit{jump set}~${D\subset\reals^n\times\reals^m}$;
\item the \textit{jump map}~$g:D\to\reals^n$ describes the discrete evolution of~$x$ when~$(x,u)$ belongs to $D$.
\end{itemize}
At times, the data of $\HS$ is explicitly denoted as
$\HS = (C,f,D,g)$. 
Furthermore, when convenient, the input $u$ is partitioned as $(u_c,u_d)$, where 
$u_c$ collects the components of $u$ that affect the flows and $u_d$ the components that affect jumps.

To simplify the formulation and development of the forthcoming results, the following mild standing assumption on the flow set $C$ is imposed.  
\begin{assumption}[Standing Assumption]
The set~$C$ is closed.
\end{assumption}
In particular, as clarified below, assuming $C$ to be closed simplifies
the notion of solution pairs. For example, with $C$ closed,
solutions are defined so that the state trajectory component of each
solution stays in the projection of $C$ onto $\reals^n$ over intervals of ordinary time,
except potentially
at the initial and end times of each such interval.  It also simplifies the conditions that the pair needs to satisfy at the initial (hybrid) time.

\begin{example}[Bouncing Ball Control]
\label{ex:bouncingball}
Consider the hybrid model of a ball bouncing vertically on a horizontal flat surface with height~$x_1$ and velocity~$x_2$. When~$x_1\geq 0$, the motion of the (point-mass) ball is represented by the second order differential equation
\begin{equation}
	\dot{x}_1=x_2,\qquad  \dot{x}_2=-\gamma
\label{eq:freefall}
\end{equation}
where~$\gamma>0$ is the gravitational constant. Impacts with the surface are assumed to occur instantaneously, namely, 
the compression of the ball at collisions is neglected
and the exchange of kinetic energy between the ball and the ground takes zero time.  This behavior is conveniently captured by the
difference equation
\begin{equation}
	x_1^+=x_1,\qquad  x_2^+=-\lambda x_2+u
\label{eq:bounce}
\end{equation}
where $\lambda\in[0,1]$ is the coefficient of restitution
and 
${u\geq 0}$ is a control input that affects the velocity after impacts.
Such an instantaneous change occurs when~$x_1=0$ and $x_2\leq 0$. 
In the autonomous case with dissipative jumps~(i.e., when~$u=0$ and~$\lambda \in [0,1)$), asymptotic stability of the origin can be certified using the total energy function~
\begin{equation}\label{eqn:BBenergy}
W(x):=\gamma x_1+\frac{x_2^2}{2}\qquad \forall x \in \reals^2
\end{equation}
 Although~$W$ is constant during flows, it decreases at each impact away from the origin. 
The dynamics can be represented in the form of~\eqref{eqn:Hp} by incorporating the constraints therein to~\eqref{eq:freefall}-\eqref{eq:bounce}. 
This system can be written as a hybrid plant $\HS$ in \eqref{eqn:Hp}, as follows:
\begin{equation}\nonumber
	\HS : 
	\left\{
	\begin{aligned}
		\matt{\dot{x}_1\\ \dot{x}_2}		&= f(x,u) := \matt{x_2\\ - \gamma}		& (x,u)&\in C\\
		\matt{x_1^+ \\x_2^+}				&= g(x,u)	:= \matt{0 \\ -\lambda x_2+u}	& (x,u)&\in D
	\end{aligned}
	\right.
\end{equation}
with $C:=\{(x,u) \in \reals^2 \times \realsgeq : x_1\geq 0\}$
and
$D:=\{(x,u) \in  \reals^2 \times \realsgeq : x_1=0, x_2\leq 0, u\geq 0\}$.
Note that the flow map is defined on $C$ and the jump map is defined on $D$. \hfill $\bigtriangleup$
\end{example}

We refer to an input signal~$u$ and the corresponding state trajectory~$x$ collectively as a solution pair~$(x,u)$ to~$\HS$. A solution pair~$(x,u)$ is parametrized by~$(t,j)\in\realsgeq\times\nats$, where~$t$ denotes flow time and~$j$ is the number of jumps. The domain of~$x$, denoted~$\dom x$, is a \textit{hybrid time domain}.
A set $S \subset \realsgeq\times\nats$
is a {compact hybrid time domain} if
$$S = \bigcup_{j=0}^{J} 
\left([t_j,t_{j+1}]\times\{j\}\right)$$
for some $J \in \nats$ and
for some finite sequence 
of times $\{t_j\}_{j=0}^{J+1}$ satisfying $0=t_0\leq t_1 \leq t_2 \leq \ldots \leq t_J \leq t_{J+1}$,
where $t_j$ is the ordinary time of the~$j$-th jump.
Then, $\dom x \subset \realsgeq \times \nats$ is a hybrid time domain if it is the union of a nondecreasing sequence of compact hybrid time domains, namely, $\dom x$ is the
(possibly infinite)
 union of compact hybrid time domains $S_0, S_1, \ldots, S_i, \ldots$ with the property that 
the sequence $\{S_i\}_{i = 0}^{\overline{i}}$, $\overline{i} \in \nats \cup \{\infty\}$, satisfies
$S_0 \subset S_1 \subset S_2 \subset \ldots \subset S_i \subset \ldots$\ .
The domain of~$u$ is denoted~$\dom u$ in a similar fashion.  Since the state and input
pair needs to satisfy the dynamics imposed by the hybrid plant at the hybrid times
for which they are defined, the domain of the input and of the state are assumed to be the same. 
Under this assumption, for simplicity, we write $\dom (x,u)$
to denote the hybrid time domain of the pair $(x,u)$.

\begin{definition}[Solution Pair]
\label{defn:solpair}
Given a pair of functions~$x:\dom x\to\reals^n$ and~$u:\dom u\to\reals^m$,~$(x,u)$ is said to be a solution pair to~\eqref{eqn:Hp} if
$\dom(x,u):=\dom x=\dom u$ is a hybrid time domain and
the following hold:
\begin{itemize}
\item[(S0)]
$(x(0,0),u(0,0))\in C\cup D$,
	\item[(S1)] For each~$j \in \nats$ such that~$I^j:=\{t:(t,j)\in\dom(x,u)\}$ has a nonempty interior, 
	\begin{itemize}
\item[(S1.1)]	the function~${t\mapsto x(t,j)}$ is locally absolutely continuous,
\item[(S1.2)] the function~${t\mapsto u(t,j)}$ is Lebesgue measurable and locally essentially bounded, 
$t\in\interior I^j$, and
\item[(S1.3)] for almost all~$t\in I^j$,
\IfJournal{
	\begin{equation}
\hspace{-0.5in}(x(t,j),u(t,j)) \in C, \quad 
		\dot{x}(t,j)  = f(x(t,j),u(t,j))
	\label{eq:flowdyn}
	\end{equation}
	}{	\begin{equation}
			\begin{aligned}
(x(t,j),u(t,j)) & \in C, \\
		\dot{x}(t,j) & = f(x(t,j),u(t,j))
		\end{aligned}
	\label{eq:flowdyn}
	\end{equation}}
	\end{itemize}
	\item[(S2)]  For each~$(t,j)\in\dom(x,u)$ {such} that~$(t,j+1)\in\dom(x,u)$,
\IfJournal{
	\begin{equation}
		\label{eq:jumpdyn}
\hspace{-0.5in}(x(t,j),u(t,j))\in D, \quad
			x(t,j+1)		= g(x(t,j),u(t,j))
	\end{equation}
}{
	\begin{equation}
		\label{eq:jumpdyn}
		\begin{aligned}
			(x(t,j),u(t,j))&\in D,\\
			x(t,j+1)					&= g(x(t,j),u(t,j))
		\end{aligned}
	\end{equation}
	}
\end{itemize}
\end{definition}
	
\begin{remark}
The condition in (SO) assures that the initial condition for the state and for the initial value of input can lead to evolution forward in hybrid time.
Since $C$ is closed, the condition in (SO) does not involve a closure on $C$. 
When $C$ is not closed, 
it might be possible to have flow when $x(0,0)$ is on the boundary of the projection of $C$ to the state space, as long as $u$ belongs to the interior of the projection of $C$ to the input space after $(t,j) = (0,0)$. 
The condition in (S1.1) guarantees enough regularity of the state trajectory so that a derivative can be calculated during intervals of flow, at least, almost everywhere.
A function $x$ that is defined on a hybrid time domain and satisfies (S1.1)
is called a {\em hybrid arc}.
The condition in (S1.2) on the input $u$ further assures integrability during flows.
A function $u$ that is defined on a hybrid time domain and satisfies (S1.2)
is called a {\em hybrid input}.
Closedness of $C$ also permits simplifying the two properties in (S1.3), by requiring them to both hold for almost all $t$, rather than requiring
that $(x(t,j),u(t,j))\in C$ holds for all $t$'s in the interior of $I^j$.
Note that a solution $(x,u)$ might belong to both $C$ and $D$,
in which case either flow or jump might be possible. 
However, when the input $u$ is given, flow intervals and jump times
are determined by its hybrid time domain $\dom u$.
\end{remark}

Throughout the paper, the set of solution pairs to~$\HS$ starting from the set~$\XinitialSet\subset\reals^n$ is denoted~$\widehat{\sol}_{\HS}(\XinitialSet)$. That is,~$(x,u)\in\widehat{\sol}_{\HS}(\XinitialSet)$ implies~$x(0,0)\in \XinitialSet$. Given a solution pair~$(x,u)$,~$(T,J)\in \dom(x,u)$ is said to be the terminal (hybrid) time of~$(x,u)$ if~$T\geq t$ and~$J\geq j$ for all~$(t,j)\in \dom(x,u)$. 
A truncation of a solution pair~$(x,u)$ up to some $(T,J)\in \dom(x,u)$
is given by a solution $(x',u')$ that is equal to $(x,u)$
with domain $\dom (x',u')$ given by
\begin{equation}\label{eqn:GenJumpTimes}
\dom (x,u) \cap ([0,T]\times\{0,1,\ldots,J\}) =: \bigcup_{i=0}^{J} ([s_i,s_{i+1}]\times\{i\})
\end{equation}
for some nondecreasing sequence $\{s_i\}_{i=0}^{J+1}$, which is said to be the sequence of {\em generalized jump times} associated with the truncation $(x',u')$.
Furthermore, a solution pair~$(x,u)$ is said to be
\IfJournal{
{\em maximal} if it cannot be further extended;
{\em complete} if~$\dom(x,u)$ is unbounded;
and to have a 
{\em compact domain} if $\dom(x,u)$ is a compact subset of $\realsgeq \times \nats$ with a hybrid time domain structure, namely, with a domain $\dom(x,u)$ equal to ${\bigcup}_{j=0}^J\left([t_j,t_{j+1}]\times\{j\}\right)$ for some finite nondecreasing sequence~$\{t_j\}_{j=0}^{J+1}$ with $t_0=0$ and  $J \in \nats$.
}{
\begin{itemize}
\item {\em maximal} if it cannot be further extended;
\item {\em complete} if~$\dom(x,u)$ is unbounded;
\end{itemize}
and to have a 
\begin{itemize}
\item 
{\em compact domain} if $\dom(x,u)$ is a compact subset of $\realsgeq \times \nats$ with a hybrid time domain structure, namely, with a domain $\dom(x,u)$ equal to ${\bigcup}_{j=0}^J\left([t_j,t_{j+1}]\times\{j\}\right)$ for some finite nondecreasing sequence~$\{t_j\}_{j=0}^{J+1}$ with $t_0=0$ and  $J \in \nats$.
\end{itemize}
}
In addition, a complete solution pair~$(x,u)$ is said to have 
\begin{itemize}
\item {\em persistent flows} if $\dom(x,u)$ is unbounded in the~$t$ direction---that is, there exists no~$T\in\realsgeq$ such that~$t\leq T$ for all~$(t,j)\in \dom(x,u)$;
	\item {\em persistent jumps} if $\dom(x,u)$ is unbounded in the~$j$ direction---that is, there exists no~$J\in\nats$ such that~$j\leq J$ for all~$(t,j)\in \dom(x,u)$.
\end{itemize}
For simplicity, at times we use $(x,u)$ to denote a solution pair or points in $\reals^n\times\reals^m$, 
explicitly making it clear what is meant by saying ``solution'' or ``points'' when not clear from context.

\begin{example}[Bouncing Ball Control (revisited)]
\label{ex:bouncingball-revisited}
For the bouncing ball with a control input in Example~\ref{ex:bouncingball}, the amount of ordinary time needed to exhibit an impact (or jump) from any given initial state~$x=(x_1,x_2)$ with~$x_1\geq 0$ is finite.
Hence, every complete solution pair to this hybrid plant has persistent jumps.  
For a complete characterization of its solutions, see 
\cite[Example 2.32]{220} and \cite[Example~2.12]{65}. 
Due to persistence of jumps, the total energy $W$ in \eqref{eqn:BBenergy} serves as a Lyapunov function for the bouncing ball system in Example~\ref{ex:bouncingball} with $u \equiv 0$~\cite[Example~3.15]{65}.
\end{example}

\begin{example}[Sample-and-Hold Control]
\label{ex:sample}
Consider a continuous-time control system with state~$z\in\reals^{n_z}$, control input~$\eta\in\reals^{m_z}$, and dynamics $\dot z = \tilde{f}(z,\eta)$, where $\tilde{f}:\reals^{n_z}\times\reals^{m_z}\to\reals^{n_z}$ is the right-hand side. When the input~$\eta$ of the plant is updated periodically using a zero-order hold mechanism (e.g., a digital-to-analog converter), the resulting control system can be written as in~\eqref{eqn:Hp} by treating~$\eta$ as a state component and by introducing a clock variable~$\tau_s\in\realsgeq$ that updates $\eta$ every $T_s$ seconds. 
Let~$x=(z,\eta,\tau_s) \in \reals^{n_z}\times\reals^{m_z}\times \realsgeq$ be the state 
and $u$ the input
of the system capturing the dynamics of the system being controlled as well as the sample-and-hold mechanism updating $\eta$. 
In contrast with the continuous-time approach in \cite{findeisen2006nonlinear},
we capture the dynamics by a hybrid plant 
$\HS$ as in \eqref{eqn:Hp} with the following data:
\begin{itemize}
\item the flow map is given by $f(x,u):=(\tilde{f}(z,\eta),0,1)$;
\item the flow set is given by $C:=\{(x,u) \in \reals^{n_z}\times\reals^{m_z}\times \realsgeq \times \realsgeq:\tau_s\in[0,T_s]\}$,
\item the jump map is given by $g(x,u):=(z,u,0)$;
\item the jump set is given by $D:=\{(x,u) \in \reals^{n_z}\times\reals^{m_z}\times \realsgeq\times \realsgeq:\tau_s=T_s\}$.
\end{itemize}
This data accomplishes the following. During flows, the definition of the flow map $f$ is such that the timer~$\tau_s$ evolves continuously with a constant rate of one, so as to reproduce ordinary time, $\eta$ remains constant, and~$z$ evolves according to the dynamics of the continuous-time system being controlled by the constant input $\eta$. When~$\tau_s$ reaches~$T_s$, which is captured by the definition of the jump set $D$, the definition of the jump map $g$ indicates that $\tau_s$ gets reset to zero and $\eta$ is updated to the value of the input~$u$ of the hybrid system at the time of the event.  At jumps, the state of the system being controlled remains constant, hence the first component of $g$ is equal to $z$.  Note that the input $u$ to the hybrid system is yet to be assigned.
In the forthcoming Section~\ref{sec:examples}, the input $u$ is assigned by the hybrid MPC algorithm presented in the next section.
If the input $u$ is given by a continuous-time signal defined for each $t \in \realsgeq$, complete solution pairs~$(x,u)$ of this system resulting from putting the continuous-time input and the state trajectory on the same hybrid time domain\footnote{See \cite{52,209} for more details.} have persistent flows and jumps, as the $j$-th jump occurs at ordinary time~$t_j=T_sj-\tau_s(0,0)$, where $\tau_s(0,0)$ is the initial condition for the timer $\tau_s$.
If the input $u$ is given only at jump times, the same construction is possible
by extending the definition of the input over the intervals of flow, for example, as a piecewise constant signal.
 \hfill $\bigtriangleup$
\end{example}

A hybrid system~$\HS$ has solution pairs with unique state trajectories~$(t,j)\mapsto x(t,j)$ if two inputs that are identical, namely, they are equal during jumps and equal almost everywhere (in time) during flows, lead to  the same state trajectory from the same initial condition. To ensure uniqueness for~$\HS$, we impose the following assumption in our forthcoming result establishing asymptotic stability (Theorem~\ref{thm:AS-MPC}).  
\begin{assumption}
\label{assmp:uniqueness}
		Given the hybrid plant $\HS=(C,f,D,g)$ in \eqref{eqn:Hp}, the {constrained} differential equation 
\begin{equation}\label{eqn:CT}
\dot{x}=f(x,u)\quad (x,u)\in C 
\end{equation}
has unique state trajectories $t\mapsto x(t)$; 
that is, for each $\xinitial$, and each pair of inputs $t \mapsto u_1(t)$ and $t \mapsto u_2(t)$ with $\dom u_1 = \dom u_2$ such that $u_1(t) = u_2(t)$ for almost all $t \in \dom u_1$, and $(\xinitial,u_1(0)) \in C$ and $(\xinitial,u_2(0)) \in C$, 
the respective solution pairs $(x_1,u_1)$ and $(x_2,u_2)$ have
identical state trajectories over their common domain of definition.
\end{assumption}

The next result can be obtained using similar steps as in the proof of \cite[Proposition 2.11]{65}.

\begin{proposition}
\label{prop:uniqueness}
The hybrid control system~$\HS$ has unique state trajectories if and only if Assumption~\ref{assmp:uniqueness} holds.
\end{proposition}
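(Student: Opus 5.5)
We prove the two implications separately, following the structure of \cite[Proposition 2.11]{65}. Recall that ``$\HS$ has unique state trajectories'' means: whenever $(x_1,u_1)$ and $(x_2,u_2)$ are solution pairs with $x_1(0,0)=x_2(0,0)$ and identical inputs (same hybrid time domain, equal at jumps, equal almost everywhere during flows), then $x_1=x_2$ on the common domain.

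\emph{Necessity.} Assume $\HS$ has unique state trajectories. Take $\xinitial$ and inputs $u_1,u_2$ as in Assumption~\ref{assmp:uniqueness}, with corresponding solutions $x_1,x_2$ of \eqref{eqn:CT}. Embed each pair $(x_i,u_i)$ as a solution pair of $\HS$ on the hybrid time domain $\dom u_i\times\{0\}$, which involves no jumps.
\begin{itemize}
\item Condition (S0) holds because $(\xinitial,u_i(0))\in C$.
\item Conditions (S1.1)--(S1.3) hold by definition of a solution of \eqref{eqn:CT}.
\item Condition (S2) is vacuous.
\end{itemize}
The two inputs are identical in the above sense, so uniqueness for $\HS$ gives $x_1=x_2$ on the common domain. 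This is exactly Assumption~\ref{assmp:uniqueness}.

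\emph{Sufficiency.} Assume Assumption~\ref{assmp:uniqueness}. Let $(x_1,u_1),(x_2,u_2)$ be solution pairs of $\HS$ with a common domain, identical inputs, and $x_1(0,0)=x_2(0,0)$. Write the domain through its generalized jump times $\{t_j\}$ as in \eqref{eqn:GenJumpTimes}. We show by induction on $j$ that $x_1(\cdot,j)=x_2(\cdot,j)$ on $I^j$.
\begin{itemize}
\item \emph{Base.} Suppose $x_1(t_j,j)=x_2(t_j,j)$. If $I^j$ is a singleton, there is nothing to show. Otherwise, the restrictions $t\mapsto (x_i(t_j+t,j),u_i(t_j+t,j))$ are solutions of \eqref{eqn:CT} from the same initial point, with inputs equal almost everywhere. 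Assumption~\ref{assmp:uniqueness} then gives $x_1(\cdot,j)=x_2(\cdot,j)$ on $I^j$.
\item \emph{Inductive step.} If $(t_{j+1},j+1)$ belongs to the domain, (S2) gives $x_i(t_{j+1},j+1)=g(x_i(t_{j+1},j),u_i(t_{j+1},j))$. The states agree at $(t_{j+1},j)$ by the previous step, and the inputs agree at jumps by hypothesis. Hence the states agree at $(t_{j+1},j+1)$. Since every point of the domain lies in some $I^j$, induction over $j$ gives $x_1=x_2$ everywhere.
\end{itemize}

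\emph{Main obstacle.} The delicate step is applying Assumption~\ref{assmp:uniqueness} on a flow interval. That assumption needs $(\xinitial,u_i(0))\in C$ at the start of the interval. However, (S1.3) only guarantees membership in $C$ for almost all $t$, and the input value exactly at the jump time $t_j$ is tied to $D$, not $C$.

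The fix is to modify $u_i$ on a null set: redefine the initial input value as a limit point, or as a value at a nearby time, where the pair lies in $C$. Two facts make this work:
\begin{itemize}
\item Closedness of $C$ ensures a valid choice exists, using continuity of $x_i$ and a sequence of times $t\downarrow t_j$ along which $(x_i,u_i)\in C$.
\item Local essential boundedness of $u_i$ guarantees that this sequence has a convergent subsequence.
\end{itemize}
The modification changes neither the state trajectory nor the almost-everywhere class of the input, so the argument goes through. If needed, the uniqueness can also be applied on $[t_j+\varepsilon,t_{j+1}]$ for small $\varepsilon>0$, and then $\varepsilon\to0$ using continuity of $x_i$.
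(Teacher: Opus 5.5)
Your proposal is correct and follows the paper's route. The paper declares necessity obvious and obtains sufficiency from uniqueness during flows (Assumption~\ref{assmp:uniqueness}) plus uniqueness at jumps, whose times are fixed by the common domain; this is exactly your embedding argument and your induction over $j$. Your treatment of the left endpoint of each flow interval fills in a point the paper's sketch skips. One small correction to how you motivate it: the fix is needed because (S1.3) holds only almost everywhere and (S0) only gives $C\cup D$. The $D$-constraint is imposed at $(t_j,j-1)$, not at $(t_j,j)$.

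There are two caveats. First, your compactness step needs $u_i(\cdot,j)$ to be essentially bounded on a right neighborhood of $t_j$. That holds under the standard definition of a hybrid input, where local essential boundedness is on the closed interval $I^j$. If (S1.2) is read literally, with boundedness only on $\interior I^j$, the step fails, and so does the statement itself. For example, take $C=\{(x,u)\in\reals^2:|x||u|=1\}$, $f(x,u)=x/\sqrt{|x|}$ and $D=\{0\}\times\reals$. Assumption~\ref{assmp:uniqueness} holds, because it is vacuous at $x=0$ and solutions from $x\neq 0$ are unique. Yet on $[0,1]\times\{0\}$, the input with $u(0,0)=0$ and $u(t,0)=4/t^2$ for $t>0$ is compatible with both $x(t,0)=t^2/4$ and $x(t,0)=-t^2/4$ from $x(0,0)=0$.

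Second, your fallback of applying uniqueness on $[t_j+\varepsilon,t_{j+1}]$ and letting $\varepsilon\to 0$ is circular. It presupposes that $x_1$ and $x_2$ already agree at $t_j+\varepsilon$, and Assumption~\ref{assmp:uniqueness} gives no continuity in the initial condition that would close the gap. Keep the redefinition argument and drop this alternative.
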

\begin{IEEEproof}
Necessity is obvious. For sufficiency, consider two solution pairs~$(x,u)$ and~$(x',u')$ with the same initial condition and domain, where~$u$ and $u'$ are equivalent. Since jumps of~$x$ and~$x'$ occur at the same ordinary time, uniqueness follows from uniqueness during flows (by Assumption~\ref{assmp:uniqueness}) and uniqueness during jumps.
\end{IEEEproof}

For  hybrid dynamical systems with single-valued flow and jump maps and no inputs (i.e., closed hybrid systems), uniqueness of solutions is guaranteed when flows are not possible on the intersection of the flow and jump sets, and the constrained differential equation
\eqref{eqn:CT}
has unique solutions~\cite[Proposition~2.11]{65} and \cite[Proposition~2.34]{220}. In contrast, Assumption~\ref{assmp:uniqueness} states that uniqueness during flows is necessary and sufficient for uniqueness of state trajectories. This property is due to the fact that given any two solution pairs~$(x,u)$ and~$(x',u)$, their jump times are determined by the domain of the input; hence, by Definition~\ref{defn:solpair}, $\dom(x,u)=\dom(x',u)=\dom u$.

\subsection{Hybrid Control Systems under Static State-Feedback}
\label{sec:feedback}

For analysis purposes, we also study the hybrid closed-loop system arising from 
assigning the input $u = (u_c,u_d)$ to static state-feedback laws $\kappa_C:\reals^n\to \reals^m$ and $\kappa_D:\reals^n\to \reals^m$ for the hybrid plant $\HS$ in \eqref{eqn:Hp}. With this input partition, the resulting hybrid closed-loop system, denoted~$\HS_{\kappa}$, is given by 
\begin{equation}
\HS_{\kappa} :
	\left\{
	\begin{aligned}
		\dot{x}		&= f_{\kappa}(x):=f(x,\kappa_C(x))		& x&\in C_{\kappa}\\
		x^+				&= g_{\kappa}(x):=g(x,\kappa_D(x))		& x&\in D_{\kappa}
	\end{aligned}
	\right.
	\label{eq:Hk}
\end{equation}
{where}
\begin{equation*}
\label{eq:CkDk}
	\begin{aligned}
		C_{\kappa}&:=\{x\in\reals^n:(x,\kappa_C(x))\in C\},\\
		D_{\kappa}&:=\{x\in\reals^n:(x,\kappa_D(x))\in D\}
	\end{aligned}
\end{equation*}
Similar to $\HS$ in \eqref{eqn:Hp}, we define solutions to~\eqref{eq:Hk} over hybrid time domains via Definition~\ref{defn:solpair}, as follows:
a hybrid arc-input pair $(x,u)$ is said to be {\em a solution pair to \eqref{eq:Hk} generated by the feedback~$\kappa$} if there exists a solution pair~$(x,u)$ to~$\HS$ that satisfies~\eqref{eq:flowdyn} with~$u(t,j)=\kappa_C(x(t,j))$ for all~$t\in \interior{I^j}$ and~\eqref{eq:jumpdyn} with~$u(t,j)=\kappa_D(x(t,j))$
for all $(t,j)\in \dom (x,u)$ such that $(t,j+1)\in \dom (x,u)$.
At times, we say that the $x$ component of such solution pair is a state trajectory of~\eqref{eq:Hk}.

\section{Hybrid MPC: Key Elements and Algorithm}
\label{sec:overview}

In this section, we introduce the main elements required to formulate
an MPC algorithm to asymptotically stabilize the hybrid plant $\HS$ in \eqref{eqn:Hp}.

\subsection{Overview}
\label{sec:HybridMPCOverview}

In simple words, the algorithm measures the current state of $\HS$ and finds an optimal solution pair to~$\HS$ over a finite horizon in hybrid time.
The optimization problem consists of minimizing a cost functional, subject to constraints on the terminal time and terminal state. 
The cost functional assesses the cost of the evolution of the state and of the possible inputs to be applied, both over intervals of flows and at time instances at which jumps occur.
The cost functional imposes a cost on the terminal state, relative to a desired {\em target set}. 
This optimization is performed over a {\em hybrid prediction horizon}, and the terminal hybrid time is allowed to vary within a set rather than being restricted to a point, as done, for instance, in discrete-time MPC where the optimization involves a fixed prediction horizon.
This optimization approach is reminiscent of free end-time optimal control.
It is instrumental for MPC for hybrid systems as the hybrid time domain of an optimal solution pair is not known a priori.
Each time the state is measured, the algorithm finds an optimal control hybrid input that is applied to~$\HS$ until the next measurement, leading to a receding horizon implementation. The initial optimization occurs at the initial time~$(t,j)=(0,0)$.  We refer to this algorithm as {\em hybrid MPC}.

There are three main differences between conventional continuous-time/discrete-time MPC, in their general formulations, and hybrid MPC:
\begin{itemize}
	\item Since hybrid control systems can have solution pairs from nearby initial conditions with drastically different hybrid time domains, the terminal time has to be allowed to vary within a set.  This set is the hybrid prediction horizon introduced above.
	\item To account for the differences in the hybrid time domains of optimal control hybrid inputs, the optimization times are not assumed to be periodic or known a priori. Instead, with the initial optimization being performed at time~$(0,0)$,  each subsequent optimization hybrid time is selected online and the re-optimization occurs 
before the current hybrid control input is exhausted.
	\item Since hybrid plants typically involve timer states, logic variables, and memory states, as Example~\ref{ex:sample} illustrates, the problem that hybrid MPC has to solve may be a set stabilization problem rather than a set-point stabilization problem. 
	In fact, the stabilization problem associated with  Example~\ref{ex:sample}  consists of asymptotic stabilizing the state $z$ to zero, $\eta$ to some compact set, and $\tau_s$ to the range $[0,T_s]$.
\end{itemize}

It should be noted that the results in this paper do not involve
discretization of the differential equation governing the flow of $\HS$, since the focus is on theoretical properties of MPC for hybrid systems.  \NotForJournal{A discussion about discretization is in the forthcoming Remark~\ref{remark:AboutDiscretization}.}

\NotForJournal{
\medskip
In the remainder of this section, we detail the formulation of the key elements involved in the formulation of the constrained optimization problem solved by hybrid MPC. }

\subsection{Hybrid Prediction Horizon}

To allow for the prediction of state trajectories that may flow or jump
without knowing the behavior a priori,
 we propose a prediction horizon $\T$ that is a subset of $\realsgeq\times\nats$.
This set-based generalization of the usual notion of prediction horizon accounts for hybrid time domains and maximizes the set of initial conditions such that the optimal control problem (OCP) associated to hybrid MPC is feasible. We refer to this construction as \textit{the hybrid prediction horizon}.
\NotForJournal{To motivate it, we revisit Example~\ref{ex:bouncingball}.

\begin{example}[Bouncing Ball Control (revisited)]
\label{ex:bouncingball-revisited2}
For the bouncing ball model in Example~\ref{ex:bouncingball} (see also Example~\ref{ex:bouncingball-revisited}), any open-loop hybrid input that steers the state to the origin necessarily leads to trajectories with the time between consecutive jumps converging to zero.\footnote{Given a complete solution pair~$(x,u)$ to~\eqref{eq:freefall}-\eqref{eq:bounce}, let~$t_{j+1}$ be the~$(j+1)$th jump time, i.e.,~$(t_{j+1},j),(t_{j+1},j+1)\in\dom(x,u)$ for all~$j\in\nats$. Then,~$t_{j+1}-t_j=\left(x_2(t_j,j)+\sqrt{(x_2(t_j,j))^2+2\gamma x_1(t_j,j)}\right)/\gamma$ for all~$j\in\nats$~\cite[Example 2.12]{65}, where~$t_0:=0$, as the flow map does not depend on the input. Consequently,~$\lim_{j\to\infty}t_{j+1}-t_j=0$ when~$\lim_{j\to\infty}|x(t_j,j)|=0$.} The implication of this property is that it would be impossible to asymptotically stabilize the origin of the bouncing ball by hybrid MPC when the associated OCP is restricted to a fixed prediction horizon of the form~$(T,J)$, no matter how~$T \in \realsgeq$ and~$J\in\nats$ are chosen.  In fact:
\begin{enumerate}
\item Since flows are not possible from the origin, given any solution pair~$(x,u)$ with compact domain and terminal time~$(T,J)$, since $T>0, $there exists~$(t,j)\in\dom(x,u)$ such that~$x(t,j)\neq(0,0)$. 
As a consequence, prediction horizons of the form $(T,J)$ with $T > 0$ are not suitable for asymptotic stabilization of the origin.
\item If the prediction horizon were to be fixed at~$(0,J)$ for some~$J\in\natspplus$ (that is, $T=0$) the OCP associated to hybrid MPC would only allow jumps and, therefore, be unsolvable from any initial condition with nonzero position or with positive velocity---namely, for every initial condition from the set~$C\backslash D$, where~$C$ and~$D$ are given in Example~\ref{ex:bouncingball}.  Again, this choice of prediction horizon would prohibit asymptotic stabilization of the origin.
\end{enumerate} 
\hfill $\triangle$
\end{example}

As highlighted in Example~\ref{ex:bouncingball-revisited}, the notion of a prediction horizon given by a singleton~$\{(T,J)\}$ can be overly restrictive and prevent a reasonable formulation of MPC for hybrid systems in the form~\eqref{eqn:Hp}.}\IfJournal{\newline}{} To maximize feasibility of the OCP, an appropriate selection of the prediction horizon~$\T$ should ensure that it intersects with unbounded hybrid time domains.
To this end, we define the hybrid prediction horizon as a set with  a specific geometry that overcomes this issue and maximizes feasibility.

\begin{definition}[Hybrid Prediction Horizon]
\label{def:Tstructure}
A set 
$\T \subset \realsgeq\times\nats$ is a hybrid prediction horizon if 
there exists {a finite nonincreasing sequence~$\{t_j\}_{j=0}^{J+1}$ such that $t_0>0$, $J\geq0$,~$t_{J+1}=0$}, and
\begin{equation*}
\label{eq:htdstructure}
	\T:=\bigcup_{j=0}^{J} \left([t_{j+1},t_{j}]\times\{j\}\right)
\end{equation*}
\end{definition}

This prediction structure guarantees that, if a solution pair $(x,u)$ ``lasts long enough,''\footnote{By the ordering induced by the structure of hybrid time domains, the length of elapsed hybrid time $(t,j)$ is succinctly captured by $t+j$.} then its hybrid time ``reaches''~$\T$---meaning that there exists $(t,j) \in \dom (x,u)$ such that $(t,j) \in \T$. This property is 
used to prove recursive feasibility in the forthcoming Section~\ref{sec:FIofX}.
\NotForJournal{
For instance, the choice $J= 1$ with
$t_0 = 2$, $t_1 = 1$, and $t_2 = 0$ leads to the hybrid prediction horizon
${\cal T} = ([0,1]\times\{1\}) \cup ([1,2]\times\{0\})$, which assures that predictions have either
at least $1$ second of flow or one jump.  This hybrid prediction horizon is depicted in Figure~\ref{fig:HybridPredictionHorizon}.

\begin{figure}[!ht]
	\centering
\hspace{-0.3in}	\includegraphics[scale=.7]{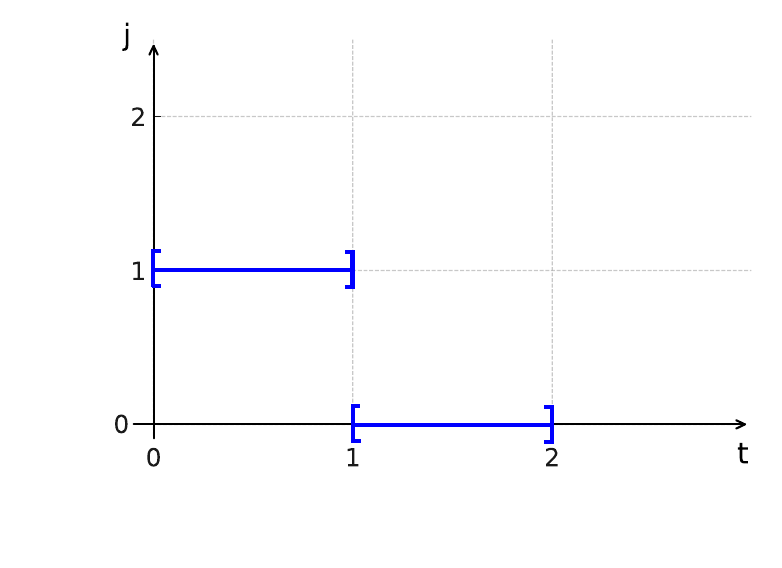}
	\vspace{-0.5in}
\caption{Hybrid prediction horizon for the case of~$J= 1$, $t_0 = 2$, $t_1 = 1$, and $t_2 = 0$.}
	\label{fig:HybridPredictionHorizon}
\end{figure}
}

\begin{remark}[On Particular Constructions of $\T$]
A natural construction of $\T$ is one that independently limits the amount of flow and the number of jumps.
Similar to discrete time MPC, the parameter $N \in \{1,2,\ldots\}$ 
denotes the maximum number of jumps allowed in the prediction.
To limit the amount of flow allowed, let $\delta > 0$ and 
define $\delta N$ as the flow time bound for prediction.
The construction of $\T$ given by
\begin{equation}
	\T:=\{(T,J)\in\realsgeq\times\nats:\max\{T/\delta,J\}=N\}
\label{eq:genericT}
\end{equation}
Note that the set $\T$ collects pairs $(T,J)$ in $\realsgeq\times\nats$ with $T$ no larger than $\delta N$ seconds and $J$ no larger than $N$ jumps, 
in this way, defining more than one possible hybrid time for prediction.
Another sample construction is given by
\begin{equation}
	\T:=\{(T,J)\in\realsgeq\times\nats:T+J\in[\mu,\mu+1]\}
\label{eq:muT}
\end{equation}
which, by properly choosing the parameter $\mu >0$, compactly captures prediction horizons allowing for arbitrary (but bounded) amount of flow or number of jumps.
\NotForJournal{In fact, given $\mu > 0$, the construction of $\cal T$ in \eqref{eq:muT} captures all pairs $(T,J)$ satisfying
        $$\mu \leq T + J \leq \mu +1$$
where $T \in \realsgeq$ and $J \in \nats$.
        Then, the set $\cal T$ in \eqref{eq:muT} can be interpreted as the region within $\realsgeq \times \nats$ satisfying the inequalities. For instance, for $J=0$, the set $\cal T$ collects the points $(T,0)$ with $T$ such that $\mu \leq T \leq \mu +1$, and so on.}
\end{remark}

\subsection{Terminal Constraint Set and Hybrid Cost Functional} 

The \textit{terminal constraint set}~$X$ specifies conditions that the state has to satisfy at the end of the hybrid prediction horizon. 
The set $X$ is assumed to be a subset of $\Pi(C \cup D)$, which 
captures the state values from which flow or jump might be possible.  Given a solution pair~$(x,u)$ of~$\HS$ with compact domain and terminal time~$(T,J)$, let~$\{t_j\}_{j=0}^{J+1}$ be the sequence such that $$\dom(x,u)=\bigcup_{j=0}^{J}([t_j,t_{j+1}]\times\{j\})$$ where~$t_{J+1}=T$. If~$x(T,J)\in X$, then the cost of the solution pair~$(x,u)$ is characterized by the
\textit{hybrid cost functional}~$\J$ defined as
\begin{equation}
\begin{aligned}
\label{eq:cost}
	\J(x,u)&:=\left(\sum_{j=0}^{J}\int_{t_j}^{t_{j+1}}L_{C}(x(t,j),u(t,j))\,dt\right) \\
	 &+\left(\sum_{j=0}^{J-1}L_{D}(x(t_{j+1},j),u(t_{j+1},j))\right)+ V(x(T,J))
\end{aligned}
\end{equation}
where $L_C:C\to\realsgeq$ is called the \textit{flow cost}, $L_D:D\to\realsgeq$ is called the \textit{jump cost}, and ${V:X\to\realsgeq}$ is called the \textit{terminal cost}.  Note that $\J$ is a functional as it depends on the solution pair $(x,u)$.

\begin{example}[Bouncing Ball Control (revisited)]
\label{ex:bouncingball-revisited-2}
Consider the hybrid model of the bouncing ball with a control input in
 Example~\ref{ex:bouncingball}, revisited in Example~\ref{ex:bouncingball-revisited}.
Since the input affects only the jumps, 
the flow cost $L_{C}$ can only depend on the state $x$. 
Suppose that the control goal is to asymptotically stabilize the bouncing ball to a
given energy level $c^* \geq 0$, where the total energy is given by $W$ in \eqref{eqn:BBenergy}. 
For this case, the flow cost and the terminal cost can be defined using the energy error captured by $W(x) - c^*$.
The jump cost $L_{D}$ can be chosen to depend on both the state and the
input. Explicit constructions of these functions are given in Example~\ref{ex:BBfinal} in Section~\ref{sec:examples}.
\end{example}

\subsection{Constrained OCP}

With the terminal constraint set~$X$ and the hybrid prediction horizon~$\T$ defined,  the minimization is performed over solution pairs of~$\HS$ with initial condition~$\xinitial$, terminal condition belonging to~$X$, and terminal hybrid time belonging to~$\T$.

\begin{problem}
\label{prob:optimum}
Given an initial condition~$\xinitial\in \reals^n$,
	\begin{equation}
	\label{eq:optimum}
		\begin{aligned}
			& \underset{(x,u)\in\widehat{\sol}_{\HS}(\xinitial)}{\text{minimize}}	& & \J(x,u)							\\
			& \text{subject to}																				& & (T,J)\in \T	\\
			& 																												& & x(T,J)\in X						\\
		\end{aligned}
	\end{equation}
\end{problem}

We say that a solution pair~$(x,u)$ is \textit{feasible} if it satisfies the constraints of~\eqref{eq:optimum} with~$x(0,0)=\xinitial$. A feasible solution may neither be maximal nor complete.  If, in addition, $(x,u)$ minimizes~$\J$, then it is said to be \textit{optimal}. The {feasible} set~$\X$ is the set of all~$\xinitial$'s with a feasible~$(x,u)\in\widehat{\sol}_{\HS}(\xinitial)$. The \textit{value function}~$\J^*:\X\to\realsgeq$ is defined as the infimum over all feasible solution pairs at a given initial condition~$\xinitial$, i.e.,
\begin{equation}
	\J^*(\xinitial):=\inf_{\substack{(x,u)\in\widehat{\sol}_{\HS}(\xinitial)\\ (T,J)\in \T\\ x(T,J)\in X}}\J(x,u) \quad \forall \xinitial\in \X
\label{eq:value}
\end{equation}
Recall that~$(T,J)$ is the terminal time of~$(x,u)$ and that $\widehat{\sol}_{\HS}(\xinitial)$ is the set of solution pairs to~$\HS$ in \eqref{eqn:Hp}  from $\xinitial$.

\begin{remark}[On Explicit Constraints]
Note that while the flow set~$C$ and jump set~$D$ implicitly define mixed state-input constraints, any additional explicit state and input constraints can be embedded in the problem by modifying $C$ and $D$ of the hybrid plant $\HS$. 
For example, the bouncing ball control system in Example~\ref{ex:bouncingball} with a maximum height constraint can be captured by redefining the flow set~$C$ therein, namely, 
$\{(x,u) \in \reals^2 \times \realsgeq : x_1\geq 0\}$,
as $C : = \{(x,u) \in \reals^2 \times \realsgeq : x_1\geq 0\} \cap \{(x,u):x_1 \leq h_{\max}\}$ for a given maximum height~$h_{\max}\geq 0$.
In fact, additional constraints to be satisfied by the closed-loop system solutions can be incorporated
by intersecting the flow set and jump set by sets capturing the new constraints.
Similarly, if state and input constraints on the continuous-time control system in Example~\ref{ex:sample} must to be satisfied, namely,
$z \in Z$ and $\eta \in U$
for some sets $Z \subset \reals^{n_p}$ and $U \subset \reals^{m_z}$,
then the flow and jump sets given in Example~\ref{ex:sample} can be simply intersected by $Z \times U$, and the resulting hybrid control system should be used in the optimization. In fact, solutions obtained with hybrid MPC, in order to exist, need to satisfy these constraints for all hybrid times---and if they are complete, these constraints are always satisfied and viable.
On the other hand, as already introduced, constraints on the terminal time and on the terminal state are specified explicitly, respectively, by a terminal constraint set and a hybrid prediction horizon set.
\end{remark}

\medskip
Summarizing, given the hybrid plant data $(C,f,D,g)$, Problem~\ref{prob:optimum}
depends on the following data:
\begin{itemize}
\item The hybrid prediction horizon $\T$;
\item The flow cost $L_{C}$, jump cost $L_{D}$, and terminal cost $V$;
\item The terminal constraint set $X$.
\end{itemize}
We refer to these as the data of the hybrid MPC problem
and denote it as $(\T,L_{C},L_{D},V,X)$.

\subsection{Receding Horizon Implementation}
\label{sec:Implementation}

We have defined all of the ingredients to formulate a receding horizon implementation of hybrid MPC.
At each iteration $i \in \nats$, the implementation performs the following operations: 
\begin{enumerate}[label={Step \arabic*)},leftmargin=*]
\item Measure the current state of the plant $\HS$;
\item Solve Problem~\ref{prob:optimum} to obtain an optimal solution pair $(x_i,u_i)$ from the current state;
\item Apply the optimal input $u_i$ to the plant $\HS$ to generate the state trajectory $x_i$ up to hybrid time $(T_{i+1},J_{i+1}) \in \dom (x_i,u_i)$;
\item Go to Step 1.
\end{enumerate}
The iteration starts from the initial condition $\xinitial$, 
generating the optimal solution pair $(x_0,u_0)$, which is applied up to $(T_1,J_1) \in \dom (x_0,u_0)$, 
and from where a new optimization problem is solved from $x_0(T_1,J_1)$, and so on.
Note that the hybrid time instances $(T_i,J_i)$ also indicate when the $i$-th optimization solving Problem~\ref{prob:optimum} is performed.
A {\em hybrid control horizon}, denoted $\T_c$, with the same structure as the hybrid prediction horizon $\T$ can be employed to regulate these time instances, in this way controlling the length of the input applied to the plant, within its domain of definition, which can be controlled by the hybrid prediction horizon.

This implementation generates a solution pair, extending the one in Definition~\ref{defn:solpair}.
\begin{definition}[Hybrid MPC Solution Pair]
\label{def:solgenMPC}
A solution pair~$(x,u)$ is said to be generated by the hybrid MPC algorithm if there exists a sequence~$\{(T_i,J_i)\}_{i=0}^{\infty}\in\dom(x,u)$ with~$(T_0,J_0)=(0,0)$ such that the following hold:
\begin{itemize}
	\item The sequence~$\{T_i+J_i\}_{i=0}^{\infty}$ is strictly increasing and unbounded.
	\item For every~$i\in\nats$, there exists an optimal solution pair~$(x_i,u_i)$, in the sense of Definition~\ref{defn:solpair}, such that for every~$(t,j)\in\dom(x,u)$ satisfying~$t+j\in[T_i+J_i,T_{i+1}+J_{i+1})$,
\IfJournal{
$$x(t,j)= x_i(t-T_i,j-J_i), \ u(t,j)= u_i(t-T_i,j-J_i)$$
	}{
		\begin{equation*}
		\begin{aligned}
			x(t,j)&= x_i(t-T_i,j-J_i)\\
			u(t,j)&= u_i(t-T_i,j-J_i)
		\end{aligned}
	\end{equation*}
}
\end{itemize}
\end{definition}

To illustrate the implementation above, consider the 
hybrid prediction horizon is given in \eqref{eq:genericT},
with parameters $N$ and $\delta$
and a hybrid control horizon with the same structure as the hybrid prediction horizon, parametrized by~$N_{c}\in\{1,2,\dots,N\}$ and~$\delta_c\in(0,\delta]$.
Algorithm~\ref{alg:HybridMPC} below 
describes, in concrete steps, the solution pair generated by hybrid MPC,
as formalized in Definition~\ref{def:solgenMPC}, for this particular case.\footnote{When the jump set is empty, Algorithm~\ref{alg:HybridMPC} simplifies to the standard periodic implementation of continuous-time MPC. Similar observations can be made when the flow set is empty, with~$N_{c}=1$ recovering the standard ``one-step ahead'' implementation in discrete-time MPC.}

\begin{algorithm}
\caption{Hybrid MPC Implementation}
	\begin{algorithmic}[1]
		\State Set $i = 0$.
		\State Set initial optimization time~$(T_0,J_0) =(0,0)$.
		\State Set the initial condition of the state of $\HS$ to $\xinitial$.
		\While{true}
			\State Solve Problem~\ref{prob:optimum} yielding optimal solution $(x_i,u_i)$.
			\While{$\max\left\{\frac{t-T_{i}}{\delta_c},j-J_{i}\right\}\leq N_c$}
				\State Apply $u_i$ to $\HS$ to generate the trajectory $x$.
			\EndWhile
			\State $i = i+1$.
			\State $(T_i,J_i) = (t,j)$, the end time of {\bf while} in line 6.
			\State $x_{\circ} = x(T_{i},J_{i})$.
		\EndWhile
	\end{algorithmic}\label{alg:HybridMPC}
\end{algorithm}
The receding horizon implementation of hybrid MPC is encoded by the inner while loop (Lines 6-8).
The initial optimization occurs at time~$(0,0)$ from the initial state $x_{\circ}$ (Lines 1 to 3 set the initial optimization time and the initial condition), and the initial optimal control input~$u_0$ is applied until~$\delta_c N_c$ units of ordinary time elapse or $N_c$ jumps occur,
 whichever happens first (Line 6). Upon any of these conditions, Problem~\ref{prob:optimum} is solved again to find the new input~$u_1$, which is again applied for~$\delta_c N_c$ units of ordinary time or until 
$N_c$ jumps occur.
 This process continues indefinitely, for each element in the input sequence.  For the $i$-th entry in this sequence, the portion of the state trajectory~$x$ from~$(T_i,J_i)$ to~$(T_{i+1},J_{i+1})$
 corresponds to the optimal state trajectory~$x_i$ computed at time~$(T_i,J_i)\in\dom(x,u)$ using input $u_i$. 
Note that due to the condition in Line~6,~$(T_{i+1}-T_i,J_{i+1}-J_i)$ is not necessarily constant as a function of $i$.

\NotForJournal{
A solution to Problem~\ref{prob:optimum} obtained using the implementation in Algorithm~\ref{alg:HybridMPC} is depicted in Figure~\ref{fig2}.
The parameters chosen are $N=4$,~$\delta=1$, $N_{c}=2$, and~$\delta_c=1$. 
The optimal control input~$u_0$ is applied until~$(T_1,J_1)=(2,1)$, which is two seconds after~$(T_0,J_0)=(0,0)$ since the hybrid time domain up to that time includes only one jump. 
The domain of $u_0$ is shown by the dash-dotted purple line,
which extends for four seconds of flow time since $N = 4$ and only two jumps occur up to that time.
At $(T_1,J_1)$, Problem~\ref{prob:optimum} is solved again to find the new optimal control input~$u_1$. The resulting optimal control input $u_1$ has a hybrid time domain that, after being shifted forward in hybrid time by~$(T_1,J_1)$, is indicated by solid blue. 
Since the hybrid time domain of $u_1$ exhibits two jumps without flow in between,
the next ``optimization event'' occurs at~$(T_2,J_2)=(2,3)$, which is two jumps after the prior optimization time~$(T_1,J_1)$. The resulting optimal control input~$u_2$ is then applied until~$(T_3,J_3)=(4,4)$, exactly two seconds after~$(T_2,J_2)$.
The domain of~$u_2$, shifted by~$(T_2,J_2)$, is depicted in red line in Figure~\ref{fig2}. Combining this sequence of optimal control inputs, the hybrid time domain of the resulting optimal state trajectory $(t,j) \mapsto x(t,j)$ is shown in Figure~\ref{fig3} from $(T_0,J_0) = (0,0)$ to~$(T_3,J_3)$.

\begin{figure}[!ht]
	\centering
	\includegraphics[scale=.8]{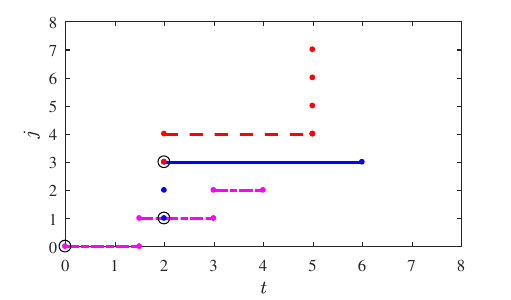}
\caption{Hybrid time domains associated with the hybrid MPC algorithm for the case of~$N=4$ and~$N_{c}=2$. 
The hybrid time instances $(T_1,J_1)$,~$(T_2,J_2)$ and~$(T_3,J_3)$ at which Problem~\ref{prob:optimum} is solved are circled.}
	\label{fig2}
\end{figure}

\begin{figure}[!ht]
	\centering
	\includegraphics[scale=.8]{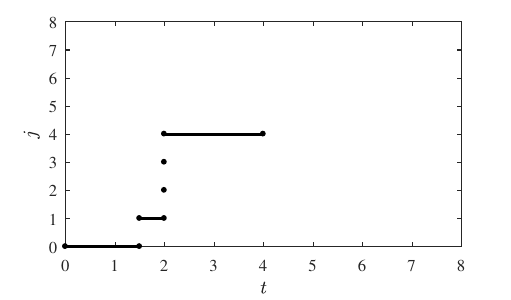}
	\caption{Hybrid time domain of the optimal trajectory associated to the hybrid time domains in Figure~\ref{fig2}.}
	\label{fig3}
\end{figure}

Under Definition~\ref{def:solgenMPC}, an appropriate notion of asymptotic stability is given in Section~\ref{sec:stability}. In general, asymptotic stability can be certified by using the value function~$\J^*$, without any assumptions on persistence of jumps or flows~\cite[Theorem~6.3]{193}, provided the cost functions defining~$\J$ have basic positive definiteness properties. With persistence of jumps or flows, these requirements can be relaxed, as detailed in Section~\ref{sec:stability}.
}

\begin{remark}[About Discretization]
\label{remark:AboutDiscretization}
Problem~\ref{prob:optimum} is formulated for the hybrid plant $\HS$ in \eqref{eqn:Hp}. Note that the flows of $\HS$ are not explicitly discretized in the implementation given in Algorithm~\ref{alg:HybridMPC}.
Such a discretization can be made explicit by choosing a discretization method for $\dot x = f(x,u)$ and defining solutions to the resulting system on discretized hybrid time domains---indeed, they are defined over hybrid time domains. Discretization for hybrid systems for the purposed of hybrid MPC are presented in \cite{205}, along with a method to solve  Problem~\ref{prob:optimum} with discretized flows; see also \cite{nodozi2025solving}. 
\end{remark}

\begin{remark}[Computing a Solution to Problem~\ref{prob:optimum}]
Computation of an optimal solution to Problem~\ref{prob:optimum} is a nontrivial task. In certain cases, Problem~\ref{prob:optimum} can be solved by converting it into a finite-dimensional nonlinear program. The development of general numerical methods to solve Problem~\ref{prob:optimum} is the current object of research, and can take the form of the maximum principle---see~\cite{pakniyat} and the references therein.
The approaches in \cite{205} and \cite{nodozi2025solving}
solve Problem~\ref{prob:optimum} with discretized flows. They algorithms therein use adaptations of
off-the-shelf optimization solvers for mixed-integer nonlinear programs, 
with potentially large computational times.
Note that \cite{286} provide conditions under
which the optimal cost can be continuously approximated numerically.
\end{remark}

\section{Main Assumptions}
\label{sec:assumptions}

We list the basic assumptions imposed on Problem~\ref{prob:optimum} to ensure feasibility and certify asymptotic stability of a given \textit{closed set}~$\A\subset X$ of interest. The required assumptions resemble those typically assumed in continuous-time and discrete-time MPC, when generalized to the set stabilization problem.\NotForJournal{\footnote{As mentioned in Section~\ref{sec:HybridMPCOverview}, hybrid systems typically have state variables that do not necessarily converge to equilibria (e.g., timers and logic variables). As such, stability theory for hybrid systems considers sets rather than singletons~\cite[Ch.~3]{65}.}} Combined with the hybrid prediction horizon defined in Definition~\ref{def:Tstructure}, the forthcoming assumptions guarantee most of the properties required to establish asymptotic stability, with the exception of positive definiteness of the value function, which is studied in depth in the forthcoming Sections~\ref{sec:posdef} and~\ref{sec:posdef-Sufficient}, as it can be achieved in multiple ways.

The following assumptions assert basic positive definiteness properties and bounds on the cost functions.
These properties are used in establishing positive definiteness of the 
value function and asymptotic stability of the closed-loop system resulting
from using hybrid MPC.

\begin{assumption}
\label{assmp:OCP}
Given a hybrid plant $\HS = (C,f,D,g)$, a closed set $\A \subset \reals^n$, flow cost $L_C$,
 jump cost $L_D$, terminal cost $V$, and terminal constraint set $X$,
	\begin{enumerate}[label={(W\arabic*)},leftmargin=*]
			\item \label{item:LC} There exists a class-$\mathcal{K}$
			function~${\alpha_C}$ such that $L_C(x,u)\geq {\alpha_C}(\Anorm{x})$ for each~$(x,u)\in C$.
		\item \label{item:LD} There exists a class-$\mathcal{K}$
		function~${\alpha_D}$ such that $L_D(x,u)\geq {\alpha_D}(\Anorm{x})$ for each~$(x,u)\in D$.
		\item \label{item:V} There exists a class-$\mathcal{K}$ function~${\alpha_V}$ such that $V(x)\geq {\alpha_V}(\Anorm{x})$ for each~$x \in X$.
	\end{enumerate}
\end{assumption}

\begin{assumption}
\label{assmp:OCP2}
Given a closet set $\A \subset \reals^n$, a terminal constraint set $X$, and a terminal cost function $V$, 
there exists~$\varepsilon>0$ and a class-$\mathcal{K}$ function~$\overline\alpha$ such that $V(x)\leq \overline\alpha(\Anorm{x})$ for each $x\in X{\cap(\A+\varepsilon\ball)}$.
\end{assumption}

The key stabilizing ingredient of our hybrid MPC algorithm is given by the following familiar CLF-like assumption
requiring the existence of a (static) state-feedback pair $\kappa = (\kappa_C,\kappa_D)$.
Below, forward pre-invariance of $X$ means that every maximal closed-loop solution with initial state in $X$ is complete and stays in~$X$ \cite[Definition 3.1]{185},
and forward completeness of $X$ means that for every point $X$ there exists a complete solution. 
A weaker notion of invariance is \textit{weak forward invariance}, which is used in the next section and simply requires that from each point in~$X$, there exists a maximal and complete solution that stays in~$X$.

\begin{assumption}
\label{assmp:CLF}
Given a hybrid plant $\HS = (C,f,D,g)$, a closet set $\A \subset \reals^n$, flow cost $L_C$,
jump cost $L_D$, terminal constraint set $X$, and terminal cost $V$,
there exists a  state-feedback law~$\kappa =(\kappa_C,\kappa_D)$ such that the terminal constraint set~$X$ is forward pre-invariant and forward complete for the hybrid system~$\HS_{\kappa}$ in~\eqref{eq:Hk}. Moreover, the terminal cost~$V$ is {differentiable} on an open set containing~${\closure(X\cap C_{\kappa})}$, and
		\begin{equation}
			\begin{aligned}
				\langle\nabla V(x),f_{\kappa}(x)\rangle	&\leq-L_C(x,\kappa_C(x)) &\forall x\in &X\cap C_{\kappa},\\
				V(g_{\kappa}(x))-V(x)										&\leq-L_D(x,\kappa_D(x)) &\forall x\in &X\cap D_{\kappa}
			\end{aligned}
		\label{eq:CLF}
		\end{equation}
\end{assumption}

The condition during flow  in \eqref{eq:CLF} resembles the one the continuous-time MPC literature. 
Therefore, it is possible to satisfy this condition, at least locally, using standard techniques relying on linearization in the special case when the flow cost and terminal cost are quadratic. The same is possible for the condition at jumps in \eqref{eq:CLF}. When~$L_C \circ \kappa_C$ (respectively,~$L_D\circ\kappa_D)$) is allowed to be zero, the terminal cost~$V$ can be viewed as a Lyapunov function with the feedback~$\kappa$, provided the trajectories of the hybrid closed-loop system~$\HS_{\kappa}$ have persistent jumps (respectively, flows). 

\begin{remark}[About Existence of Solutions]
\label{remark:Existence}
\NotForJournal{Any MPC-type algorithm would only be able to issue a control input when the optimization problem at the current state has a solution.}  Existence of a solution to Problem~\ref{prob:optimum} from a feasible initial condition $\xinitial$ 
follows directly from \cite{286} under standard regularity conditions,\footnote{Though the results in this paper assume that optimal (hence, feasible) solutions exist, for the integral 
in the hybrid cost functional~$\J$ in \eqref{eq:cost} to exist, the composition $L_C \circ (x,u)$ needs to be (Lebesgue) measurable.} or from \cite{goebel2019existence}.
In particular, the approach in \cite{286}  rewrites the problem into the Mayer form
	\begin{equation}
	\label{eq:optimumMayer}
		\begin{aligned}
			& \underset{z\in\widehat{\sol}_{\HS_M}}{\text{minimize}}	& & \J(z)							\\
			& \text{subject to}																				& &\!\!\!\! (z(0,0),(T,J),z(T,J))\!\in\! \{(\xinitial,0)\} \times \T \times (X \times \reals)
		\end{aligned}
	\end{equation}
where $z = (x,\ell)$ with $\ell$ representing the running cost and $\HS_M$ is an autonomous system
arising from the conversion to the Mayer form,
and studying its existence of solutions---see \cite[Section 6]{286} for examples constructing the Mayer form system $\HS_M$.
More precisely, \cite[Theorem 8]{286} ensures existence of solutions to Problem~\ref{prob:optimum} from the feasible set $\X$ when $\cal J$ is lower semicontinuous and  $X$ is closed.
\end{remark}

\section{Properties of the Optimal Control Problem}
\label{sec:ocp}

This section presents the properties pertinent to Problem~\ref{prob:optimum} that are used to show asymptotic stability of~$\A$. 

\subsection{Forward Invariance of the Feasible Set}
\label{sec:FIofX}

The initial results are in the same spirit as those in~\cite{193}, stated in more generality.

\begin{proposition}
\label{prop:aroundA}
If there exists a state-feedback law~$\kappa$ such that the terminal constraint set~$X$ is weakly forward invariant for the hybrid system~$\HS_{\kappa}$ in~\eqref{eq:Hk}, then~$X\subset \X$.
\end{proposition}
\begin{IEEEproof}
Since for each point $\xinitial$ in $X$ there exists a maximal and complete solution,
each trajectory to~$\HS_{\kappa}$ starting from~$X$ stays in~$X$.
The domain of each such trajectory intersects with the prediction horizon~$\T$, due to the structure of the hybrid prediction horizon in Definition~\ref{def:Tstructure}.
Hence, $\xinitial$ belongs to $\X$.
\end{IEEEproof}

\begin{proposition}
\label{prop:T1T2}
{Suppose that there exists a state-feedback law~$\kappa$ such that the terminal constraint set~$X$ is forward pre-invariant and forward complete for the hybrid system~$\HS_{\kappa}$ in~\eqref{eq:Hk}.} 
Then, each feasible solution pair $(x,u)$ to $\HS$ satisfies $x(t,j) \in \X$ for each~$(t,j)\in\dom(x,u)$.
\end{proposition}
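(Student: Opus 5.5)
The plan is to show that the state at any intermediate hybrid time of a feasible solution pair can serve as the initial condition of a new feasible pair. The new pair will be built by concatenating the remaining part of the original solution with a closed-loop solution of $\HS_{\kappa}$ that starts at the terminal state and stays in $X$.

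Fix a feasible pair $(x,u)$ from $\xinitial$ with terminal time $(T,J)\in\T$ and $x(T,J)\in X$, and fix $(t,j)\in\dom(x,u)$. Define the tail pair $(x',u')$ by
\[
x'(s,k):=x(s+t,k+j),\qquad u'(s,k):=u(s+t,k+j),
\]
on the shifted domain, which is a compact hybrid time domain with terminal time $(T-t,J-j)$. One must check that $(x',u')$ is a solution pair to $\HS$ per Definition~\ref{defn:solpair}. Conditions (S1) and (S2) are inherited directly. For (S0), one needs $(x(t,j),u(t,j))\in C\cup D$. If a jump follows, this holds by (S2). If flow follows, it holds because $C$ is closed and $(x(s,j),u(s,j))\in C$ for almost every $s$ nearby. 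If $(t,j)=(T,J)$ is the terminal time, then $x(T,J)\in X\subset\Pi(C\cup D)$, and the input value at that single point can be redefined so that (S0) holds. The tail ends at $x'(T-t,J-j)=x(T,J)\in X$. However, its terminal time $(T-t,J-j)$ need not lie in $\T$, since it is typically ``too short.''

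To reach $\T$, use Assumption-type hypotheses: $X$ is forward pre-invariant and forward complete for $\HS_{\kappa}$. From $x(T,J)\in X$, forward completeness gives a complete closed-loop solution $(\tilde x,\tilde u)$. Because $X$ is forward pre-invariant, every maximal solution from $X$ is complete and stays in $X$, so $\tilde x(s,k)\in X$ for all $(s,k)$ in its domain. Concatenate $(x',u')$ with $(\tilde x,\tilde u)$ shifted by $(T-t,J-j)$, giving a solution pair $(\hat x,\hat u)$ to $\HS$ from $x(t,j)$. This is legitimate because both pieces share the point $x(T,J)$ at the junction, and the input at the junction can be taken from the second piece, which satisfies the flow or jump condition required there.

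Next, truncate $(\hat x,\hat u)$ at a hybrid time in $\T$. Since $(T-t)+(J-j)\le T+J\le \max_{(s,k)\in\T}(s+k)$, and since the concatenated domain is unbounded, the structure of $\T$ in Definition~\ref{def:Tstructure} ensures the domain meets $\T$ at some $(\hat T,\hat J)$ with $(\hat T,\hat J)\succeq(T-t,J-j)$. The staircase $\T$ separates the bounded region $\{(s,k): s<t_k\}$ from its complement, and an unbounded hybrid time domain starting at the origin must cross it. One can also argue, as in Proposition~\ref{prop:aroundA}, using the ordering by $s+k$.

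This crossing point lies in the closed-loop part of the concatenation, or at the junction, so $\hat x(\hat T,\hat J)\in X$. The one exception is when the tail's own domain already meets $\T$. That case also works, provided the point lies at the end of the tail, or else we simply truncate there if the state is in $X$. To avoid this subtlety, I will instead truncate at the first point of $\T$ at or after $(T-t,J-j)$ along the concatenated domain. Such a point exists because $(T-t,J-j)$ lies in the region on or below $\T$: the original $(T,J)\in\T$ dominates it componentwise-in-order, and $\T$ is ``monotone''.

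The truncation at $(\hat T,\hat J)$ is then feasible, so $x(t,j)\in\X$. I expect the main obstacle to be this geometric step: rigorously showing that, from $(T-t,J-j)$, the continuation's hybrid time domain reaches $\T$ at or after the junction. This requires carefully using the nonincreasing staircase $t_0\ge t_1\ge\dots\ge t_{J+1}=0$. I would argue by the $k$-index: the domain continues until either $s\ge t_k$ at some jump count $k\le J$, or $k$ reaches $J$ with $t_{J+1}=0$.
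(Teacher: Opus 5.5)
Your proposal is correct and follows the paper's proof essentially step for step. You extend $(x,u)$ beyond its terminal time $(T,J)$ by a complete $\kappa$-generated solution that remains in $X$, restart the concatenation at $(t,j)$, and truncate at a point of $\mathcal{T}$ reached at or after the junction $(T-t,J-j)$, where the state lies in $X$. Your staircase argument supplies detail the paper only asserts: $(T-t,J-j)$ lies on or below $\mathcal{T}$ because $T-t\le T\le t_J\le t_{J-j}$, so the unbounded continuation must meet $\mathcal{T}$ level by level. Note, however, that at level $k$ the hitting condition is $s\ge t_{k+1}$, not $s\ge t_k$. Also, (S0) for the tail generally requires resetting the input value at the single instant $(t,j)$, which closedness of $C$ alone does not provide; the paper passes over this technicality as well.
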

\begin{IEEEproof}
The proof relies on the extension of the solution pair~$(x,u)$ from the terminal point by aid of the feedback~$\kappa$. That is, we take a solution pair~$(x',u')$ generated by the feedback~$\kappa$ starting from~$x(T,J)\in X$, where~$(T,J)$ is the terminal time of~$(x,u)$. That is, we let~$(z(s,i),v(s,i)):=(x(s,i),u(s,i))$ for all~$(s,i)$ with~$s+i<T+J$, and~$(z(s+T,i+J),v(s+T,i+J)):=(x'(s,i),u'(s,i))$, and observe that~$(z,v)$ is a solution pair to~$\HS$. Then, we consider the solution pair~$(z,v)$ from~$(t,j)\in\dom(z,v)$ onwards, which is itself a solution pair, say~$(z',v')$. We notice that from~$(T-t,J-j)\in\dom(z',v')$ onwards,~$z$ stays in the terminal constraint set~$X$. Finally, we note that there must exist~$(s,i)\in\dom(z',v')$ after time~$(T-t,J-j)\in\dom(z',v')$ that belongs to the prediction horizon~$\T$. Hence, the truncation of~$(z',v')$ at time~$(s,i)$ is a feasible solution pair to $\HS$, since~$(s,i)\in\T$ and~$z(s,i)\in X$.
\end{IEEEproof}

Proposition~\ref{prop:aroundA} and \ref{prop:T1T2} show that the terminal constraint set is contained in the feasible set, and recursive feasibility is maintained with the moving horizon implementation, respectively. The conditions in Proposition~\ref{prop:aroundA} and the inequalities in Assumption~\ref{assmp:CLF} ensure an upper bound on the value function~$\J^*$ over the terminal constraint set~$X$, in terms of the terminal cost~$V$. 

\subsection{Continuity of the Value Function}

Combining the property of $X$ 
established in Proposition~\ref{prop:aroundA}
with the bounds obtained in Assumption~\ref{assmp:CLF}
is the first step towards establishing~$\J^*$ as a Lyapunov function for the hybrid MPC algorithm.
 The terminal cost upper bounds not only the value function on the terminal constraint set, but also the cost of feasible solutions generated by $\kappa$.
To establish this result, we exploit similar techniques to those in the standard MPC literature, see, e.g.,
\cite{Mayne.ea.00.Automatica} and
\cite[Section 5.3]{GrunePannek2017}.

\begin{lemma}
\label{lem:valuecont}
Suppose that Assumption~\ref{assmp:CLF} holds.  Then,
\begin{enumerate}
\item \label{item:VboundOnValueFunction}
$\J^*$ and $V$ satisfy
\begin{equation}\label{eqn:VboundOnValueFunction}
\J^*(\xinitial)\leq V(\xinitial)\qquad 
\forall \xinitial\in X
\end{equation}
\item \label{item:JboundOnValueFunction}
For each $\xinitial\in X$ and each feasible solution pair $(x,u)$ starting from~$\xinitial$ generated by the state-feedback law~$\kappa$, $\J^*$ and $\J$ satisfy
\begin{equation}\label{eqn:VboundOnValueFunction2}
\J^*(\xinitial)\leq \J(x,u) \leq V(\xinitial)
\end{equation}
\end{enumerate}
\end{lemma}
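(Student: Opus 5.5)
The plan is to prove item~\ref{item:JboundOnValueFunction} first and then deduce item~\ref{item:VboundOnValueFunction} from it.

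\emph{Existence of a feasible pair.} Fix $\xinitial\in X$. By Assumption~\ref{assmp:CLF}, $X$ is forward pre-invariant and forward complete for $\HS_{\kappa}$. Hence there is a maximal, complete solution pair generated by $\kappa$ from $\xinitial$, and it stays in $X$. As in the proof of Proposition~\ref{prop:aroundA}, its unbounded hybrid time domain must intersect $\T$, by the structure in Definition~\ref{def:Tstructure}. Truncating at such a point $(T,J)\in\T$ yields a feasible pair generated by $\kappa$ with $x(T,J)\in X$. This shows that such feasible pairs exist, so item~\ref{item:VboundOnValueFunction} will follow from item~\ref{item:JboundOnValueFunction}.

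\emph{Proof of item~\ref{item:JboundOnValueFunction}.} Let $(x,u)$ be any feasible pair from $\xinitial\in X$ generated by $\kappa$. Denote its terminal time by $(T,J)$ and its jump times by $\{t_j\}$. The inequality $\J^*(\xinitial)\le \J(x,u)$ holds by the definition of the infimum in \eqref{eq:value}. The claim $\J(x,u)\le V(\xinitial)$ will come from a telescoping argument on $V$ along the trajectory. Because $X$ is forward pre-invariant, $x(t,j)\in X$ for all $(t,j)\in\dom(x,u)$. On each flow interval $I^j$ with nonempty interior, (S1.3) gives $(x(t,j),\kappa_C(x(t,j)))\in C$ for almost all $t$, so $x(t,j)\in X\cap C_\kappa$ almost everywhere. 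The function $t\mapsto V(x(t,j))$ is absolutely continuous: $V$ is differentiable, hence locally Lipschitz-like along the arc, on an open set containing $\closure(X\cap C_\kappa)$, and $x$ is absolutely continuous with values in that closure. The chain rule and the flow inequality in \eqref{eq:CLF} then give
\begin{equation*}
V(x(t_{j+1},j))-V(x(t_j,j))\le -\int_{t_j}^{t_{j+1}} L_C(x(t,j),u(t,j))\,dt .
\end{equation*}
At each jump, $x(t_{j+1},j)\in X\cap D_\kappa$, and the jump inequality in \eqref{eq:CLF} gives
\begin{equation*}
V(x(t_{j+1},j+1))-V(x(t_{j+1},j))\le -L_D(x(t_{j+1},j),u(t_{j+1},j)).
\end{equation*}
Summing these over all flow intervals and all jumps telescopes to $V(x(T,J))-V(\xinitial)\le -(\text{flow costs}+\text{jump costs})$. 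Rearranging gives $\J(x,u)\le V(\xinitial)$ by \eqref{eq:cost}.

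\emph{Main obstacle.} The delicate step is the flow step. It requires justifying absolute continuity of $t\mapsto V(x(t,j))$ and the a.e.\ chain rule when $V$ is merely differentiable, not $C^1$. My first attempt will use the fact that the arc lies in the closure where $V$ is differentiable and that $V$ is differentiable on an open neighborhood of the arc. For an absolutely continuous arc, a differentiable $V$ does yield an a.e.\ chain rule, but monotonicity of $V\circ x$ needs more than an a.e.\ nonpositive derivative. I would handle this with a standard Dini-derivative argument: $V\circ x$ is differentiable at every $t$ where $x$ is differentiable and satisfies the ODE, and I would combine this with a comparison lemma for functions whose derivative is bounded by an integrable function. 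If that route becomes cumbersome, I would instead assume local Lipschitzness of $V$ (or $C^1$), as is implicitly done in the MPC literature, to get absolute continuity directly. The remaining steps are routine bookkeeping with generalized jump times.
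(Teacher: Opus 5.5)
Your architecture matches the paper's proof. A feasible $\kappa$-generated pair exists from each $\xinitial\in X$ by forward completeness and the staircase structure of $\T$, as in Proposition~\ref{prop:aroundA}. Along that pair, forward pre-invariance of $X$ and the two inequalities in \eqref{eq:CLF} telescope to $\J(x,u)\le V(\xinitial)$, and the infimum in \eqref{eq:value} gives the rest.

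The step that fails as you wrote it is the flow step. The paper also skips it, by simply writing $\int\frac{dV}{dt}\,dt$ as a difference of values of $V$.
\begin{itemize}
\item Your claim that differentiability of $V$ makes $V\circ x$ absolutely continuous (``locally Lipschitz-like'') is false. $V(y)=y^2\sin(1/y^2)$ is differentiable everywhere, yet with $x(t)=t$ the composition has unbounded variation on $[0,1]$.
\item Your Dini/comparison fallback does not repair this. Such comparison lemmas need the derivative bound off a countable set, or off a set whose image is null. You only have it off the null set where $x$ fails to be differentiable or to satisfy the ODE, and that set may be uncountable. The Cantor function shows an a.e.\ bound alone proves nothing.
\item Retreating to locally Lipschitz or $C^1$ $V$ proves the lemma only under a stronger hypothesis than Assumption~\ref{assmp:CLF}.
\end{itemize}

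The missing ingredient is Lusin's property (N) for $F(t):=V(x(t,j))$. It uses that $V$ is differentiable at \emph{every} point of an open set $O\supset\closure(X\cap C_\kappa)$, which contains the arc.
\begin{itemize}
\item Write $O$ as the union over $k$ of the sets of $z$ with $|V(y)-V(z)|\le k|y-z|$ whenever $|y-z|<1/k$. Split each into countably many pieces of diameter less than $1/k$; $V$ is Lipschitz on each piece.
\item Hence $V$ maps $\mathcal{H}^1$-null sets to null sets. An absolutely continuous $x$ maps null sets to $\mathcal{H}^1$-null sets. So $F$ has property (N).
\item $F$ is continuous with $F'\le -L_C\le 0$ a.e. Suppose $F(a)<F(b)$ for some $a<b$. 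For $y\in(F(a),F(b))$ let $c_y:=\max\{t\in[a,b]:F(t)\le y\}$. Then $F(c_y)=y$ and the right difference quotients at $c_y$ are positive.
\item So $F'(c_y)=0$ for every $c_y$ in the full-measure set where $F'$ exists and is nonpositive. The image of these $c_y$ is null because the derivative vanishes there. The image of the remaining $c_y$ is null by (N). This contradicts the fact that the images fill $(F(a),F(b))$.
\item Therefore $F$ is nonincreasing, hence of bounded variation. By the Banach--Zarecki theorem (continuous, bounded variation, and (N) imply absolutely continuous), $F$ is absolutely continuous.
\item Now $F(t_{j+1})-F(t_j)=\int F'\le-\int L_C$ is justified.
\end{itemize}
With this inserted, the jump step, telescoping, infimum, and existence argument go through as in the paper.
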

\begin{IEEEproof}
Since Assumption~\ref{assmp:CLF} holds, Proposition~\ref{prop:aroundA} implies that $X \subset \X$.
Let $(x,u)$ be a feasible solution to Problem~\ref{prob:optimum} with $\xinitial \in X$
and, without loss of generality, let~$(T,J)$ be the terminal time of~$(x,u)$ and $\{s_j\}_{j=0}^{J}$ be the associated sequence of generalized jump times. Then, using \eqref{eq:cost},
\IfJournal{$\J(x,u)=\left(\sum_{j=0}^{J}\int_{s_j}^{s_{j+1}}L_{C}(x(t,j),\kappa_C(x(t,j)))\,dt\right) 
	+\left(\sum_{j=0}^{J-1}L_{D}(x(s_{j+1},j), \kappa_D(x(s_{j+1},j)))\right)+V(x(T,J))$.
Since~$X$ is forward pre-invariant for~$\HS_{\kappa}$, 
$x$ remains in $X$ and, by~\eqref{eq:CLF}, $\J(x,u)\leq-\left(\sum_{j=0}^{J}\int_{s_j}^{s_{j+1}}\frac{dV}{dt}(x(t,j))\,dt\right. 
	+\left.\sum_{j=0}^{J-1} \left(V(x(s_{j+1},j+1))-V(x(s_{j+1},j))\right)\right)+V(x(T,J))
	= -(V(x(T,J))-V(x(0,0)))+V(x(T,J)) 
	=V(x(0,0)) = V(\xinitial)$.}{
\begin{multline*}
	\J(x,u)=\left(\sum_{j=0}^{J}\int_{s_j}^{s_{j+1}}L_{C}(x(t,j),\kappa_C(x(t,j)))\,dt\right) \\
	+\left(\sum_{j=0}^{J-1}L_{D}(x(s_{j+1},j), \kappa_D(x(s_{j+1},j)))\right)+V(x(T,J))
\end{multline*}
Since~$X$ is forward pre-invariant for~$\HS_{\kappa}$, $x$ remains in $X$ and, by~\eqref{eq:CLF},
\begin{multline*}
	\J(x,u)\leq-\left(\sum_{j=0}^{J}\int_{s_j}^{s_{j+1}}\frac{dV}{dt}(x(t,j))\,dt\right. \\
	+\left.\sum_{j=0}^{J-1} \left(V(x(s_{j+1},j+1))-V(x(s_{j+1},j))\right)\right)+V(x(T,J))\\
	= -(V(x(T,J))-V(x(0,0)))+V(x(T,J)) \\
	=V(x(0,0)) = V(\xinitial)
\end{multline*}}
Item~2 follows from the definition of $\J^*$ in \eqref{eq:value} since the value of $\J^*$ 
is the result of minimizing the cost functional over all such feasible solution pairs.
Item~1 follows from the fact that the set $X$ is 
forward complete
 and that the cost
obtained by any optimal solution cannot be larger than the cost for the solution generated 
using $\kappa$. 
\end{IEEEproof}

The following continuity property of $\J^*$ is a direct consequence of item~\ref{item:VboundOnValueFunction} in Lemma~\ref{lem:valuecont}.
\begin{proposition}
\label{proposition:continuityValueFunction}
Under Assumption~\ref{assmp:CLF},
for each~$\epsilon>0$, there exists~$\delta>0$ such that~${\J^*(x)\leq\epsilon}$ for each~${x\in\X}$ satisfying~$\Anorm{x}\leq\delta$.
\end{proposition}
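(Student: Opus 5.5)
The plan is to get the bound from item~\ref{item:VboundOnValueFunction} of Lemma~\ref{lem:valuecont} combined with an upper bound on $V$ near $\A$. I will read the statement as also using Assumption~\ref{assmp:OCP2}, since Assumption~\ref{assmp:CLF} alone gives no smallness of $V$ near $\A$. The argument then splits $\X\cap(\A+\delta\ball)$ into two pieces: its intersection with $X$, and the points of $\X\setminus X$.

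\textbf{Step 1 (points in $X$).} Fix $\epsilon>0$ and let $\varepsilon>0$ and $\overline\alpha\in\classK$ be as in Assumption~\ref{assmp:OCP2}. Choose $\delta_1\in(0,\varepsilon]$ with $\overline\alpha(\delta_1)\leq\epsilon$, which is possible because $\overline\alpha$ is continuous with $\overline\alpha(0)=0$. Proposition~\ref{prop:aroundA} applies under Assumption~\ref{assmp:CLF}, since forward pre-invariance together with forward completeness implies weak forward invariance; hence $X\subset\X$ and $\J^*$ is defined on $X$. Then for every $x\in X$ with $\Anorm{x}\leq\delta_1$, \eqref{eqn:VboundOnValueFunction} and Assumption~\ref{assmp:OCP2} give
\[
\J^*(x)\leq V(x)\leq\overline\alpha(\Anorm{x})\leq\epsilon .
\]

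\textbf{Step 2 (points of $\X\setminus X$ near $\A$).} This is the step I expect to be the main obstacle: Lemma~\ref{lem:valuecont} says nothing at points outside $X$, and a merely feasible pair from such a point need not have small cost. My first attempt is to show that, after shrinking $\delta$, the set $\X\cap(\A+\delta\ball)$ is contained in $X$, so that Step~1 covers everything. The natural route is a contradiction argument. Take $x_k\in\X\setminus X$ with $\Anorm{x_k}\to0$, together with feasible pairs from $x_k$, and use $\A\subset X$ with $\A$ closed to force $x_k\in X$ for large $k$.

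If that containment fails, the fallback is to build, from each $x\in\X$ near $\A$, an explicit feasible pair that reaches $X\cap(\A+\varepsilon\ball)$ after a short hybrid time with small accumulated stage cost, and then continues with $\kappa$. Proposition~\ref{prop:T1T2}, via concatenation with $\kappa$-generated solutions, would supply the continuation. Item~\ref{item:JboundOnValueFunction} of Lemma~\ref{lem:valuecont} would bound the tail of this pair by $V$ of the entry point, which Step~1 makes small. The initial segment would be controlled by local boundedness of $L_C$ and $L_D$ near $\A$. Whether this segment can be made short enough is exactly the regularity issue I expect to be delicate.

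\textbf{Step 3 (conclusion).} Taking $\delta$ as the minimum of $\delta_1$ and the radius obtained in Step~2 yields $\J^*(x)\leq\epsilon$ for all $x\in\X$ with $\Anorm{x}\leq\delta$.
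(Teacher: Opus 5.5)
\textbf{Where your proposal matches the paper.} Your Step~1 is exactly the paper's argument. The paper justifies the proposition in one line, as a direct consequence of item~\ref{item:VboundOnValueFunction} of Lemma~\ref{lem:valuecont}, i.e., $\J^*\leq V$ on $X$. As you say, it tacitly needs Assumption~\ref{assmp:OCP2}: with $V\equiv 1$ and $L_C=L_D\equiv 0$, Assumption~\ref{assmp:CLF} still holds whenever $X$ is forward pre-invariant and complete, yet $\J^*\equiv 1$.

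\textbf{The gap.} That argument only gives the bound for $x\in X$. Your Step~2 is meant to cover $\X\setminus X$, and it cannot be closed from the stated hypotheses.
\begin{itemize}
\item Your first route is invalid. Knowing $\A\subset X$ with $\A$ closed gives no reason for points near $\A$ to lie in $X$, since $X$ may have empty interior (e.g., $X=\A$). So a sequence $x_k\in\X\setminus X$ with $\Anorm{x_k}\to 0$ produces no contradiction.
\item Your fallback assumes that $X\cap(\A+\varepsilon\ball)$ can be entered from every nearby feasible point with small accumulated cost. That is a reachability property. Neither Assumption~\ref{assmp:CLF}, nor Assumption~\ref{assmp:OCP2}, nor local boundedness of $L_C,L_D$ provides it.
\end{itemize}

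\textbf{Counterexample on $\X\setminus X$.} Take
\begin{itemize}
\item $C=D=\reals^2\times\reals$, $f\equiv 0$, $g(x,u)=(x_1,0)$;
\item $\A=\{0\}$, $X=\reals\times\{0\}$, $V\equiv 0$, $L_C\equiv 0$;
\item $L_D(x,u)=0$ if $x_2=0$ and $L_D(x,u)=1$ otherwise;
\item any $\kappa$, and any $\T$ with $J\geq 1$.
\end{itemize}
Every point of $X$ is fixed by both the flows and the jumps of $\HS_\kappa$. Hence $X$ is forward pre-invariant and forward complete, both inequalities in \eqref{eq:CLF} read $0\leq 0$, and Assumption~\ref{assmp:OCP2} holds trivially.

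Now start from $(0,s)$ with $s\neq 0$. Flows do not move the state, so every feasible pair must jump, and its first jump occurs from the state $(0,s)$ at cost $1$. Jumping once and then flowing until $\T$ is reached is feasible. Hence $(0,s)\in\X$ and $\J^*(0,s)=1$, while $\Anorm{(0,s)}=|s|\to 0$.

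The discontinuity of $L_D$ is not essential. Let the state move counterclockwise with speed $u\in[0,1]$ on the circle $|x-(1,0)|=1$, with $D=\emptyset$, $\A=X=\{0\}$, $V\equiv 0$, $\kappa_C\equiv 0$, $L_C(x,u)=|x|^2$, and $t_0\geq 2\pi$. Points just past the origin must travel around the whole circle, including the arc of length $4\pi/3$ where $|x|\geq 1$. So $\J^*\geq 4\pi/3$ there.

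\textbf{What can actually be proved.} Your Step~1, like the paper's one-line argument, proves the bound on $X\cap(\A+\delta\ball)$. To get it on all of $\X$ you must add a hypothesis. For example, assume $\X\cap(\A+\eta\ball)\subset X$ for some $\eta>0$; then Step~1 alone finishes the proof. Alternatively, state your fallback's small-cost reachability of $X$ as an assumption rather than something to be derived.
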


\subsection{Decreasing Properties of the Value Function}

Next, we show that~$\J^*$ is upper bounded by a nonincreasing function along optimal trajectories, which decreases during flows (respectively, at jumps) if $L_C$ (respectively, $L_D$) satisfies the lower bound in~\ref{item:LC} (respectively, in~\ref{item:LD}).  
The proof follows the steps used in the standard MPC literature,
see, e.g., 
\cite[Section 3.4]{Mayne.ea.00.Automatica} and
\cite[Lemma 5.4]{GrunePannek2017}.

\begin{lemma}
\label{lem:descent}
Suppose 
Assumption~\ref{assmp:CLF} holds.
Then, for each optimal solution~$(x,u)$ and each~$(t,j)\in\dom(x,u)$,
\begin{multline*}
\hspace{-0.15in}	\J^*(x(t,j))\leq\J^*(x(0,0)){-\sum_{i=0}^{j}\int_{s_i}^{s_{i+1}}L_C(x(s,i),u(s,i))\,ds}\\
									{-\sum_{i=0}^{j-1}L_D(x(s_{i+1},i),u(s_{i+1},i))}
\end{multline*}
where~$\{s_i\}_{i=0}^{j+1}$ is the sequence of generalized jump times of the truncation of~$(x,u)$ up to time~$(t,j)$.
\end{lemma}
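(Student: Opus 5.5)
The argument is the standard MPC ``tail plus terminal extension'' construction, adapted to hybrid time domains. Fix an optimal solution pair $(x,u)$ from $\xinitial = x(0,0)$, with terminal time $(T,J)\in\T$ and $x(T,J)\in X$, and fix $(t,j)\in\dom(x,u)$. Write $\J(x,u)$ as the sum of three pieces:
\begin{itemize}
\item the running cost accumulated up to $(t,j)$, namely the two sums in the statement, taken over the generalized jump times $\{s_i\}$ of the truncation;
\item the running cost of the tail from $(t,j)$ to $(T,J)$;
\item the terminal cost $V(x(T,J))$.
\end{itemize}
The first piece is exactly what appears subtracted in the claimed bound, and $\J(x,u)=\J^*(x(0,0))$ by optimality. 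It therefore suffices to exhibit a feasible solution pair from $x(t,j)$ whose cost is at most the tail running cost plus $V(x(T,J))$.

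\textbf{Construction of the candidate.} Define the shifted tail $(x',u')$ by $x'(s,i)=x(s+t,i+j)$ and $u'(s,i)=u(s+t,i+j)$ on the shifted domain. This is a solution pair from $x(t,j)$. Its terminal time is $(T-t,J-j)$, and its endpoint $x(T,J)$ lies in $X$.

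Typically $(T-t,J-j)\notin\T$, so the tail is extended. As in the proof of Proposition~\ref{prop:T1T2}, Assumption~\ref{assmp:CLF} gives forward completeness and forward pre-invariance of $X$ for $\HS_\kappa$. So there is a complete solution $(x'',u'')$ of $\HS_\kappa$ from $x(T,J)$ that stays in $X$. Concatenate it after the tail. The concatenated domain is unbounded, so by the geometry of $\T$ in Definition~\ref{def:Tstructure} some hybrid time of the concatenation beyond $(T-t,J-j)$ lies in $\T$.

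The point to check is that such a time exists \emph{after} the tail ends. This holds because $\T$ is a ``staircase'' separating bounded hybrid times from unbounded ones. The tail's terminal time satisfies $(T-t)+(J-j)\le T+J$, and $(T,J)\in\T$, so the tail ends on or before the staircase. The unbounded extension must then cross $\T$. I would make this precise by a short argument on the monotone sequence $\{t_j\}$ defining $\T$: for each $j'\le J$, the interval $[t_{j'+1},t_{j'}]$ is reached from below at level $j'$, or the domain passes to level $j'+1$ at an earlier time. Truncating there gives a feasible pair $(z,v)$ from $x(t,j)$.

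\textbf{Cost bound.} $\J(z,v)$ equals the tail running cost, plus the running cost of the $\kappa$-extension up to the truncation time $(\sigma,k)$, plus $V(z(\sigma,k))$. The telescoping computation in the proof of Lemma~\ref{lem:valuecont}, which uses \eqref{eq:CLF} along the extension in $X$, shows that the last two terms together are at most $V(x(T,J))$; this is item~\ref{item:JboundOnValueFunction} applied from $x(T,J)$. Hence
\[
\J^*(x(t,j))\le \J(z,v)\le \J(x,u)-(\text{running cost up to }(t,j)),
\]
which is the claim.

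The main obstacle is the hybrid-horizon step: guaranteeing that the concatenated domain meets $\T$ at a time not earlier than the end of the tail. A secondary, routine matter is the bookkeeping of generalized jump times when $(t,j)$ sits at a jump or $t=s_{j+1}$; there the jump cost at $(s_{j+1},j)$ must not be counted, which matches the sum up to $j-1$ in the statement.
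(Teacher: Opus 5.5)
Your proposal is correct and is essentially the paper's proof. The paper reuses the pair $(z,v)$ from the proof of Proposition~\ref{prop:T1T2}: the shifted tail from $x(t,j)$, followed by a $\kappa$-generated extension $(x',u')$ from $x(T,J)$, truncated at a time in $\T$. It writes $\J(z,v)=\J(x,u)-(\text{running cost up to }(t,j))-V(x(T,J))+\J(x',u')$, and invokes the telescoping bound behind Lemma~\ref{lem:valuecont} to get $\J(x',u')-V(x(T,J))\le 0$. For the horizon step you flag, note that the sum comparison $(T-t)+(J-j)\le T+J$ does not by itself place the tail's endpoint below the staircase. What does is the componentwise bound $T-t\le T\le\tau_J\le\tau_{J-j}$, where $\T=\bigcup_{k}[\tau_{k+1},\tau_k]\times\{k\}$ with $\{\tau_k\}$ nonincreasing; from there your level-by-level argument goes through.
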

\begin{IEEEproof}
Given the optimal solution $(x,u)$, we recall the feasible solution pair derived in the proof of Proposition~\ref{prop:T1T2} that starts from~$x(t,j)$. Call this solution pair~$(z,v)$, and note that after time~$(T-t,J-j)\in\dom(z,v)$,~$(z,v)$ corresponds to a solution generated by the feedback~$\kappa$, say~$(x',u')$, where~$(T,J)$ is the terminal time of~$(x,u)$. Then, observe that~$\J^*(x(t,j))\leq \J(z,v)$ as~$(z,v)$ is not optimal. 
The following holds:
\IfJournal{$\J^*(x(t,j))\leq\J(z,v)=\J(x,u)
-\sum_{i=0}^{j}\int_{s_i}^{s_{i+1}}L_C(x(s,i),u(s,i))\,ds
-\sum_{i=0}^{j-1}L_D(x(s_{i+1},i),u(s_{i+1},i))
-V(x(t,j))+\J(x',u')$.}{
\begin{multline*}
\J^*(x(t,j))\leq\J(z,v)=\J(x,u)\\
-\sum_{i=0}^{j}\int_{s_i}^{s_{i+1}}L_C(x(s,i),u(s,i))\,ds\\
-\sum_{i=0}^{j-1}L_D(x(s_{i+1},i),u(s_{i+1},i))\\
-V(x(t,j))+\J(x',u')
\end{multline*}}
The bound $\J(z,v)$ follows from the fact that $(z,v)$ starts from $x(t,j)$ and that $(z,v)$ is not optimal.
The equality captures the following fact: 
the cost of $(x,u)$, which is up to $(T,J)$, plus the cost of $(x',u')$ is equal to the cost of $(x,u)$ up to $(t,j)$ plus the cost of $(z,v)$.  By solving for the cost of $(z,v)$ from this relationship we obtain the equality above.
Finally, since~$x(t,j)=x'(0,0)$, the term~$-V(x(t,j))+\J(x',u')$ is nonpositive by Lemma~\ref{lem:valuecont}, leading to the desired result as~$\J(x,u)=\J^*(x(0,0))$ since $(x,u)$ is optimal.
\end{IEEEproof}

\subsection{Solution-dependent Conditions for Positive Definiteness of the Value Function}
\label{sec:posdef}

To ensure that the value function is positive definite and bounded from below by a positive definite function,
we start by assuming the existence 
of a positive definite function~$\alpha$ such that an appropriate combination of the following conditions holds for every optimal solution pair~$(x,u)$:
\begin{enumerate}[label={P\arabic*)},leftmargin=*]
	\item  \label{item:bestbound} 
there exist $(t,j)$, $(t',j')\in\dom(x,u)$ with~$t+j\leq t'+j'$ such that $t'-t+j'-j\geq \alpha(\Anorm{x(0,0)})$ and
\[
	\Anorm{x(s,i)}\geq\alpha(\Anorm{x(0,0)}) 
\]
for all $(s,i)\in\dom (x,u)$
satisfying $t+j\leq s+i\leq t'+j'$;
	\item \label{item:flobound}
there exist $(t,j), (t',j')\in\dom(x,u)$ with~$t+j\leq t'+j'$ such that $t'-t\geq \alpha(\Anorm{x(0,0)})$ and
\[
	\Anorm{x(s,i)}\geq\alpha(\Anorm{x(0,0)}) 
\]
for all $(s,i)\in\dom (x,u)$
satisfying $t+j\leq s+i\leq t'+j'$;
\item \label{item:jumbound} 
$\Anorm{x(t,j)}\geq\alpha(\Anorm{x(0,0)})$ for some $(t,j)\in\dom(x,u)$ such that~$(t,j+1)\in\dom(x,u)$;
\item \label{item:termbound} 
$\Anorm{x(T,J)}\geq {\alpha}(\Anorm{x(0,0)})$, where~$(T,J)$ is the terminal time of~$(x,u)$;
\item \label{item:flowjumpbound}
There exist an open neighborhood~$\U$ of $\A$ and 
a continuous function~$\sigma:\realsgeq\to\realsgeq$ such that
$|f(x,u)|\leq \sigma\left(\Anorm{x}\right)$ for all $(x,u)\in C$ such that $x\in \U$.
\end{enumerate}
Section~\ref{sec:posdef-Sufficient} provides sufficient conditions for these properties.
Note that \ref{item:flobound} implies \ref{item:bestbound}, and that \ref{item:jumbound} implies \ref{item:bestbound}.  

\begin{theorem}(Positive Definiteness of the Value Function)
\label{thm:PDofJ}
Given a hybrid plant $\HS = (C,f,D,g)$ as in \eqref{eqn:Hp},
the data $(\T,L_{C},L_{D},V,X)$ defining Problem~\ref{prob:optimum},
and a closed set $\A \subset \reals^n$,
if there exists a 
positive definite function $\alpha:\realsgeq\to\realsgeq$ such that 
one of the following conditions holds:\footnote{These conditions allow for zero stage cost $L_C$ or $L_D$, in particular, when the flows or jumps are persistent. They also prevent fast convergence to $\mathcal{A}$ during flow, which may prevent guaranteeing positive definiteness of the value function. In fact, while, in discrete time, positive definiteness of the value function follows from positive definiteness of the stage cost, when flows are not discretized like in our setting, optimal solutions might reach the target set $\mathcal{A}$ in finite time, possibly making the integral cost zero almost everywhere. For example, for the scalar continuous-time system $\dot x = u$ with $L_C(x,u) = |x|$ and $\mathcal{A} = \{0\}$, the control $u(t) = -\frac{x_{\circ}}{r}$ for each $t \in [0,r)$ and zero otherwise, where $x_{\circ} \not \in \mathcal{A}$ is the initial condition and $r >0$ is arbitrary, leads to a cost equal to $\frac{x_{\circ}r}{2}$, which vanishes to zero as $r$ approaches zero, making the value function to be zero at $x_{\circ}$. }
\begin{enumerate}
    \item\label{item1-pd} 
    Every optimal solution pair~$(x,u)$ satisfies \ref{item:bestbound}, and 
    $L_C:C\to\realsgeq$ and $L_D:D\to\realsgeq$ 
     satisfy Assumption~\ref{assmp:OCP};
    \item\label{item2-pd}  
    Every optimal solution pair~$(x,u)$ satisfies \ref{item:flobound} and 
    $L_C:C\to\realsgeq$  satisfies  Assumption~\ref{assmp:OCP};
    \item\label{item3-pd}  
    Every optimal solution pair~$(x,u)$ satisfies \ref{item:jumbound} and $L_D:D\to\realsgeq$ satisfies Assumption~\ref{assmp:OCP};
    \item\label{item4-pd}  
    Every optimal solution pair~$(x,u)$ satisfies \ref{item:termbound} and 
    $V:X\to\realsgeq$ satisfies  Assumption~\ref{assmp:OCP};
    \item\label{item5-pd}  
    Every optimal solution pair~$(x,u)$ satisfies \ref{item:flobound}
    or \ref{item:termbound}, and 
    $L_C:C\to\realsgeq$ and $V:X\to\realsgeq$ satisfy  Assumption~\ref{assmp:OCP};
    \item\label{item6-pd}  
    Every optimal solution pair~$(x,u)$ satisfies \ref{item:jumbound} or \ref{item:termbound}, and 
    $L_D:D\to\realsgeq$ and $V:X\to\realsgeq$ satisfy Assumption~\ref{assmp:OCP};
    \item\label{item7-pd} 
        Every optimal solution pair~$(x,u)$ satisfies one among \ref{item:bestbound}-\ref{item:flowjumpbound},
and $L_C:C\to\realsgeq$, $L_D:D\to\realsgeq$, and $V:X\to\realsgeq$ satisfy Assumption~\ref{assmp:OCP};
\end{enumerate}
then there exists a positive definite function 
$\alpha^*:\realsgeq\to\realsgeq$ such that 
the value function satisfies
\begin{equation}\label{eqn:ValueFunctionLowerBound}
\J^*(\xinitial)\geq \alpha^*(\Anorm{\xinitial}) 
\end{equation}
for each $\xinitial\in \X$ such that 
an optimal solution pair~$(x,u)\in\widehat{\sol}_{\HS}(\xinitial)$ exists.
Furthermore, the function
 $\alpha^*$ is class-$\mathcal{K}_{\infty}$
in items~\ref{item1-pd}-\ref{item6-pd} if the bounds in Assumption~\ref{assmp:OCP} hold with class-$\mathcal{K}_\infty$ functions
and $\alpha$ is class-$\mathcal{K}_\infty$.
\end{theorem}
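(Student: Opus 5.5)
Fix $\xinitial\in\X$ with an optimal pair $(x,u)$, so $\J^*(\xinitial)=\J(x,u)$. Since $L_C,L_D,V$ are nonnegative, $\J(x,u)$ is bounded below by each of its three parts separately: the flow integral, the sum of jump costs, and the terminal cost. The plan is to handle each item by showing that the property P$k$ it assumes forces one of these parts to be at least a positive definite function of $r:=\Anorm{\xinitial}$. We then take $\alpha^*$ as the minimum of the bounds obtained. A minimum of finitely many positive definite (resp. class-$\mathcal{K}_\infty$) functions is again positive definite (resp. class-$\mathcal{K}_\infty$), which also settles the ``or'' items~\ref{item5-pd}, \ref{item6-pd}, \ref{item7-pd}.

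The individual cases run as follows.

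\begin{itemize}
\item P2 with (W1). On the hybrid-time window $t+j\le s+i\le t'+j'$ we have $\Anorm{x(s,i)}\ge\alpha(r)$, and the total flow time in the window is $t'-t\ge\alpha(r)$. Hence the flow-cost part is at least $\alpha(r)\,\alpha_C(\alpha(r))$.
\item P3 with (W2). The jump cost at that jump gives $\J\ge\alpha_D(\alpha(r))$.
\item P4 with (W3). The terminal cost gives $\J\ge\alpha_V(\alpha(r))$.
\item P1 with (W1),(W2). The window has $(t'-t)+(j'-j)\ge\alpha(r)$. Either $t'-t\ge\alpha(r)/2$, which yields the P2-type bound with $\alpha/2$, or $j'-j\ge\alpha(r)/2>0$. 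In the latter case at least one jump occurs inside the window from a point with $\Anorm{\cdot}\ge\alpha(r)$, giving $\alpha_D(\alpha(r))$. So we take $\min\{\tfrac{\alpha(r)}{2}\alpha_C(\alpha(r)),\alpha_D(\alpha(r))\}$.
\end{itemize}

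The class-$\mathcal{K}_\infty$ claim follows because compositions and products of class-$\mathcal{K}_\infty$ functions are class-$\mathcal{K}_\infty$.

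The main obstacle is P5 in item~\ref{item7-pd}, since it is a bound on $f$ rather than on the trajectory. Here we argue that the solution cannot approach $\A$ quickly during flow. If P3 or P4 holds we are done. Otherwise, let $\U\supset\A+\rho\ball$ locally, and note that the terminal state satisfies $\Anorm{x(T,J)}<\alpha(r)$, which is small. If there is a jump, we use (W2) applied at the first jump: either it occurs from a point at distance $\ge\alpha(r)$ (a P3-type bound), or the solution reached distance $<\alpha(r)$ by flow alone. The same dichotomy applies when there is no jump, with the terminal state in place of the jump point.

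When the distance decreased from $\min\{r,\rho\}$ to below $\beta(r):=\min\{\alpha(r),r,\rho\}/2$ purely by flow inside $\U$, Lipschitzness of $\Anorm{\cdot}$ and $|\dot x|\le\sigma(\Anorm{x})\le\bar\sigma:=\max_{[0,\rho]}\sigma$ give a flow time of at least $(\min\{r,\rho\}-\beta(r))/\bar\sigma$ spent at distance $\ge\beta(r)$. Then (W1) yields a positive definite lower bound. If $\bar\sigma=0$ we replace it by $1$. Some care is needed with $\sigma$ only continuous and with $r\ge\rho$, where we first flow into $\U$. No $\mathcal{K}_\infty$ claim is needed for this item.
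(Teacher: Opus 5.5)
Your argument is correct. For items 1--6 it follows the paper's route: bound $\J(x,u)$ from below by one nonnegative part of the cost (flow integral, jump sum, or terminal cost) on the window that P1--P4 provide. Your treatment of P1 splits into $t'-t\ge\alpha(r)/2$ or $j'-j\ge 1$ and checks that the jump lies inside the window. This is a cleaner version of the paper's combined estimate $(t'-t+j'-j)\min\{\alpha_C(\alpha(r)),\alpha_D(\alpha(r))\}$.

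For P5 in item~7 you take a genuinely different route.

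\emph{The paper's route.} It uses the structure of the horizon. Set $T:=\min\{t:(t,0)\in\T\}>0$ and $t'(r):=\min\{r/(2\varphi(r)),T\}$, where $\varphi(r)=\max_{r'\in[r/2,r]}\sigma(r')$ is shown continuous via Berge's theorem. Every feasible pair then either jumps before $t'(r)$ from a point at distance $>r/2$, or, because its terminal time lies in $\T$, flows over all of $[0,t'(r)]$ at distance $>r/2$. This yields the bound $\min\{t'(r)\alpha_C(r/2),\alpha_D(r/2)\}$, and $V$ is never used.

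\emph{Your route.} You make a dichotomy at the first jump point, or at the terminal point if there is no jump. If that point is far from $\A$, you use (W2) or (W3). Otherwise you use a crude uniform speed bound $\bar\sigma$ together with (W1).

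\emph{Trade-offs.} The paper's version would survive without (W3). However, it needs $(0,0)\notin\T$, i.e.\ $t_1>0$ in Definition~\ref{def:Tstructure}; this fails, for instance, when $J=0$. Your version uses (W3), which item~7 assumes anyway. In exchange it is independent of the structure of $\T$ and avoids the continuity argument for $\varphi$.

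Two small repairs are needed in your P5 paragraph.

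First, do not route the dichotomy through P3/P4 with the given $\alpha$. When $\alpha(r)\ge r$, the branch ``the solution reached distance $<\alpha(r)$ by flow alone'' carries no information. Also, a first jump or terminal point at distance in $[\beta(r),\alpha(r))$ is not covered as written. Use $\beta(r)$ itself as the threshold:
\begin{itemize}
\item In the far case you get $\alpha_D(\beta(r))$ or $\alpha_V(\beta(r))$.
\item In the near case, take the last time the distance is $\ge\min\{r,\rho\}$ before it first reaches $\beta(r)$. This gives flow time at least $(\min\{r,\rho\}-\beta(r))/\bar\sigma$ spent at distance $\ge\beta(r)$ inside the tube.
\end{itemize}

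Second, $\rho$ must be a single constant with $\A+\rho\ball\subset\U$, not a local one. Otherwise $\alpha^*$ would depend on where $\xinitial$ sits along $\A$. For a closed unbounded $\A$ this does not follow from $\U$ being an open neighborhood. The paper makes the same implicit assumption through its choice of $\epsilon$.
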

\begin{IEEEproof}
Pick $\xinitial \in \X$ and suppose that there exists an optimal solution pair $(x,u)$ starting from $\xinitial$.
Let $\{s_j\}_{j=0}^{J+1}$ be the sequence of generalized jump times of $(x,u)$, namely, it satisfies $\dom (x,u) = \cup_{j=0}^J ([s_j,s_{j+1}]\times\{j\})$.
Therefore,  we have
\IfJournal{$
\J^*(\xinitial) = \J(x,u) = \left(\sum_{j=0}^{J}\int_{s_j}^{s_{j+1}}L_{C}(x(t,j),u(t,j))\,dt\right) 
	+\left(\sum_{j=0}^{J-1}L_{D}(x(s_{j+1},j), u(s_{j+1},j))\right)+V(x(T,J))$.}{
\begin{multline*}
\J^*(\xinitial) = \J(x,u) = \left(\sum_{j=0}^{J}\int_{s_j}^{s_{j+1}}L_{C}(x(t,j),u(t,j))\,dt\right) \\
	+\left(\sum_{j=0}^{J-1}L_{D}(x(s_{j+1},j), u(s_{j+1},j))\right)+V(x(T,J))
\end{multline*}}
Next, we consider the cases in items~\ref{item1-pd}-\ref{item7-pd}.

Now, suppose that item~\ref{item1-pd} holds.
Using the fact that $L_C$ and $L_D$ are positive definite with respect to $\A$, 
$V$ takes on nonnegative values,
the properties of the solution $(x,u)$ for hybrid times between
$(t,j)$ and  $(t',j')$ coming from item~\ref{item1-pd} with positive definite function $\alpha$ obtained from \ref{item:bestbound}, 
there exists a positive definite function
$\widetilde{\alpha}:\realsgeq \to \realsgeq$ such that\footnote{We use $s$ instead of $t$ as integration variable to simplify notation.}
\begin{align*}
\J^*(\xinitial) & \geq \int_{t}^{t'}{\widetilde{\alpha}}(\Anorm{x(0,0)})\,d s  +\sum_{j}^{j'}{\widetilde{\alpha}}(\Anorm{x(0,0)}) \\
&\geq  {\alpha}(\Anorm{\xinitial})  \widetilde{\alpha}(\Anorm{\xinitial}) =: \alpha^*(\Anorm{\xinitial}) 
\end{align*}
When item~\ref{item2-pd} holds,
similarly,
using the fact that $L_C$ is positive definite with respect to $\A$,
the properties in item~\ref{item2-pd},
and the fact that $V$ is nonnegative,
it follows that there exists a positive definite function
$\widetilde{\alpha}:\realsgeq \to \realsgeq$ such that
\IfJournal{$\J^*(\xinitial)  \geq \int_{t}^{t'}{\widetilde{\alpha}}(\Anorm{x(0,0)})\,ds 
\geq {\alpha}(\Anorm{\xinitial}) \widetilde{\alpha}(\Anorm{\xinitial}) =: \alpha^*(\Anorm{\xinitial})$,}{
\begin{align*}
\J^*(\xinitial) & \geq\int_{t}^{t'}{\widetilde{\alpha}}(\Anorm{x(0,0)})\,d s \\
&\geq (t'-t) \underline{\alpha}(\Anorm{\xinitial})^{\frac{1}{2}} \\
&\geq {\alpha}(\Anorm{\xinitial}) \widetilde{\alpha}(\Anorm{\xinitial}) =: \alpha^*(\Anorm{\xinitial})
\end{align*}}
The case when item~\ref{item3-pd} holds follows similarly.
The case when item~\ref{item4-pd} holds is immediate due to $V$ being positive definite, from where there exists 
a positive definite function
${\alpha}^*:\realsgeq \to \realsgeq$
satisfying $V(x(T,J)) \geq {\alpha}^*(\Anorm{x(T,J)}) \geq {\alpha}^*(\Anorm{\xinitial})$, which is a lower bound for $\J^*(\xinitial)$.
The cases in items~\ref{item5-pd}, \ref{item6-pd}, and \ref{item7-pd} under conditions  
\ref{item:bestbound}-\ref{item:termbound}
follow directly from combinations of the steps above.
The case in item~\ref{item7-pd} under condition \ref{item:flowjumpbound} is shown next.

Let~$T:=\min\{t\in\realsgeq: (t,0)\in \T\}$, which is positive from the construction in Definition~\ref{def:Tstructure}. With $\U$ and $\sigma$~(assumed to be p.d. without loss of generality) 
coming from \ref{item:flowjumpbound},
fix~$\epsilon>0$ such that~$\Anorm{x}\leq\epsilon$ implies~$x\in \U$. Define the function~$\varphi:[0,\epsilon]\to\realsgeq$ as $\varphi(r):=\max_{r'\in[r/2,r]}\sigma(r')$ for all  $r \in[0,\epsilon]$.
Note that~$\varphi$ is continuous. \NotForJournal{This property can be shown via Berge's maximum theorem~\cite[Theorem~1.4.16]{aubin}, similar to the proof of Lemma~\ref{lem:cont}, by showing that the set-valued mapping associating the interval~$[r/2,r]$ with each~$r \in[0,\epsilon]$ is upper and lower semicontinuous.\footnote{A set-valued mapping~$F:\reals^n\rightrightarrows\reals^m$ is upper semicontinuous at a point~$x\in\dom F$ with~$F(x)$ compact if for any~$\epsilon>0$, there exists~$\delta>0$ such that~$F(x+\delta\ball)\subset F(x)+\epsilon\ball$~\cite[Definition~1.4.1]{aubin}. It is said to be lower semicontinuous at a point~$x\in\dom F$ if for any~$y\in F(x)$ and for any sequence~$\{x_i\}_{i\in\nats}\in\dom F$ converging to~$x$, there exists a sequence $\{y_i\}_{i\in\nats}$ with~$y_i\in F(x_i)$ for all~$i\in\nats$, which converges to~$y$~\cite[Definition~1.4.1]{aubin}. These notions should not be confused with upper and lower semicontinuity of (extended) real-valued functions.} 
}
Take any~$r\in(0,\epsilon]$. Let~$(x,u)\in\sol_{\HS}$ be a feasible solution pair from $\xinitial$ such that~$\Anorm{\xinitial}=r$. Define the scalar~$t'(r):= \min\{r/(2\varphi(r)),T\}>0$. Observe that for all~$t< t'(r)$ satisfying~$(t,0)\in\dom(x,u)$, we have~$\Anorm{x(t,0)}>r/2$; the converse would imply~$t'(r)\varphi(r)>r/2$. Therefore, 
from the p.d. property of $L_C$, there exists a positive definite function $\widetilde{\alpha}_C$ such that
\begin{multline}
	\int_{0}^{t}L_C(x(s,0),u(s,0))\,ds \NotForJournal{\geq\int_{0}^{t}\widetilde{\alpha}_C(\Anorm{x(s,0)})\,ds} \\
	\qquad \qquad \geq t\widetilde{\alpha}_C(r/2)\\ \quad \forall (t,0)\in\dom(x,u)\cap[0,t'(r)]\times\{0\},
\label{eq:costf}
\end{multline}
and, if there exists~$t\leq t'(r)$ so that~$(t,1)\in\dom(x,u)$, then
\begin{equation}
	L_D(x(t,0),u(t,0))\geq\widetilde{\alpha}_D(\Anorm{x(t,0)})\geq\widetilde{\alpha}_D(r/2)
\label{eq:costj}
\end{equation}
for some p.d. function $\widetilde{\alpha}_D$,
from the p.d. property of $L_D$.

Suppose there exists no~$t\leq t'(r)$ that satisfies~$(t,1)\in\dom(x,u)$. Then, because~$t'(r)\leq T$, it must be that~$(t'(r),0)\in\dom(x,u)$, and hence by~\eqref{eq:costf}, we have
from \eqref{eq:cost}
 that~$\J(x,u)\geq t'(r)\widetilde{\alpha}_C(r/2)$. On the other hand, if~$t\leq t'(r)$ exists so~$(t,1)\in\dom(x,u)$, then~\eqref{eq:costj} implies~$\J(x,u)\geq \widetilde{\alpha}_D(r/2)$. Thus, it follows that
\begin{equation}
	\J(x,u)\geq 
	\min\{t'(r)\widetilde{\alpha}_C(r/2),\widetilde{\alpha}_D(r/2)\} =:\widetilde{\alpha}(r) >0
\label{eq:lobound}
\end{equation}
for all feasible~$(x,u)\in\sol_{\HS}$ such that~$\Anorm{\xinitial}=r$. Note that $\widetilde{\alpha}(r)$ depends continuously on~$r$ on~$(0,\delta]$ since~$t'(r)$ depends continuously on~$r$ on~$(0,\delta]$. Moreover, $\lim_{r\to 0}\widetilde{\alpha}(r)=0$.
Now, let~$(x,u)\in\sol_{\HS}$ be a feasible solution pair from $\xinitial$ such that~$\Anorm{\xinitial}\geq\epsilon$. Since~$x$ can reach~$\A+(\epsilon/2)\ball$ no sooner than~$(t'(\epsilon),0)$ by flowing, a similar analysis shows that \eqref{eq:lobound} holds with $\epsilon$ instead of $r$.
Combining these bounds
with the fact that~$\widetilde{\alpha}(r)$ depends continuously on~$r$ on~$[0,\epsilon]$ and approaches zero as~$r$ tends to zero, it follows that~$\J(x,u)\geq \alpha^*(\Anorm{x(0,0)})$ for every feasible solution pair~$(x,u)\in\sol_{\HS}$, where the function~$\alpha^*:\realsgeq\to\realsgeq$ is defined as $\alpha^*(\epsilon) := 0$ if $\epsilon =0$ and $\alpha^*(\epsilon) := \widetilde{\alpha}(\epsilon)	$ if $\epsilon > 0$.
Consequently,~$\J^*(\xinitial)\geq \alpha^*(\Anorm{\xinitial})$ for all~$\xinitial\in \X$.
\end{IEEEproof}

\begin{remark}
The solution-dependent conditions in Theorem~\ref{thm:PDofJ} guarantees required properties to
guarantee positive definiteness of the value function.
Conditions~\ref{item:bestbound}-\ref{item:termbound} therein relax positive definiteness properties of the stage costs and terminal cost, in particular, to allow for the stage cost for flows or jumps to be zero, which permits the user to not associate a cost to flows or jumps, respectively.  
For instance, for solution pairs with persistent jumps, item~\ref{item3-pd} relaxes positive definiteness of the flow cost $L_C$ and terminal cost $V$ by assuming a property on optimal solution pairs $(x, u)$ originating away from $\A$ -- see condition \ref{item:jumbound}. This property guarantees that $x$ is away from $\A$ at a jump time, which ensures that the cost of $(x, u)$ is nonzero.
A similar relaxation is possible for the jump cost $L_D$, via item~\ref{item2-pd}, through the use of \ref{item:flobound}. 
Note that the conditions in item~\ref{item2-pd} relax those in item~\ref{item1-pd}
by requiring the flow cost to be positive definite when flows are persistent, as captured by \ref{item:flobound}.
A similar comment applies to item~\ref{item3-pd}. Item~\ref{item7-pd} exploits condition \ref{item:flowjumpbound} to establish the lower bound, similar to results in the standard MPC literature, at the price of requiring positive definiteness of $L_C$, $L_D$, and $V$.
Note that \ref{item:flowjumpbound} implies the existence of a uniform upper bound on the magnitude of the velocity of the state trajectories from nearby $\A$, and that is satisfied when $f$ is continuous, $\A$ is compact, and $C = C' \times U$ for some closed set $C' \subset \reals^n$ and some compact set $U \subset \reals^m$,  which corresponds to the typical assumptions on state and input constraints in the MPC literature.
\end{remark}

Before stating solution-independent sufficient conditions for \ref{item:bestbound}-\ref{item:termbound}, we revisit the sample-and-hold control example.

\begin{example}[Sample-and-Hold Control (revisited)]
\label{ex:samplejump} 
Consider the digital control system in Example~\ref{ex:sample} and suppose that the function~$\widetilde{f}$ is linear, i.e.,~$\widetilde{f}(z,\eta)=Az+B\eta$ for some matrices~$A$ and~$B$. Let~$\A=\{0\}\times\{0\}\times[0,T_s]$. Along the first period of flow, a solution pair~$(x,u)$ is given by~$\tau_s(t,0)=t+\tau_s(0,0)$ and
\[
	\begin{bmatrix}
	z(t,0)\\ \eta(t,0)
	\end{bmatrix}
=\exp(\widetilde{A}t)
	\begin{bmatrix}
	z(0,0)\\ \eta(0,0)
	\end{bmatrix}, \quad \widetilde{A}:=	\begin{bmatrix}
	A& B\\ 0 & 0
	\end{bmatrix}
\]
Note that the function~$t\mapsto\exp(\widetilde{A}t)$ is continuous and $\exp(\widetilde{A}t)$ is nonsingular
for each~$t\geq 0$. Moreover, regardless of the choice of the horizon~$\T$, due to the timer dynamics, 
the sequence~$\{t_j\}_{j=0}^{J+1}$ defining~$\dom(x,u)$ is such that~$t_1\leq T_s-\tau_s(0,0)\leq T_s$. Since singular values vary continuously in the matrix entries, there exists~$c>0$ such that
\begin{multline*}
	\Anorm{x(t,0)}=|(z(t,0),\eta(t,0))|\\
\qquad	\geq c|(z(0,0),\eta(0,0))|=\Anorm{x(0,0)}
\end{multline*}
for all $(t,0)\in\dom(z,u)$.
Hence,~$\Anorm{x(t_1,0)}\geq c\Anorm{x(0,0)}$, and \ref{item:jumbound} holds with $\alpha(s) := c s$ for each $s \geq 0$.
\end{example}

\subsection{Sufficient Conditions for P1-P4}
\label{sec:posdef-Sufficient}

P1-P4 can be verified via infinitesimal conditions if the flows of $\HS$ satisfy a Lyapunov-like inequality that limit the rate of convergence to~$\A$ and prohibit finite-time convergence, or if appropriate local boundedness conditions hold.

\begin{proposition}
\label{prop:mess}
Given a hybrid plant $\HS = (C,f,D,g)$ as in \eqref{eqn:Hp},
the data $(\T,L_{C},L_{D},V,X)$ defining Problem~\ref{prob:optimum},
and a closed set $\A \subset \reals^n$, 
suppose that one of the following conditions holds:
\begin{enumerate}
\item\label{item:mess1} Given a function $\widetilde{V}:\reals^n\to\realsgeq$ that is continuously differentiable\footnote{This condition could as well be formulated for a locally Lipschitz function $\widetilde{V}$ through the use of the generalized Clarke derivative.} on an open set containing~$\closure(\Pi(C))$, suppose that there exist 
class-${\cal K}$
 functions~$\widetilde{\alpha}_1$ and $\widetilde{\alpha}_2$, and constants $\lambda\in\reals$ and~$\varepsilon>0$ such that
\begin{equation}
	{\widetilde{\alpha}_1}(\Anorm{x})\leq \widetilde{V}(x)\leq {\widetilde{\alpha}_2}(\Anorm{x}) \quad \forall x\in \Pi(C): \Anorm{x}\leq\varepsilon,
\label{eq:candidate}
\end{equation}
\begin{equation}
	\langle\nabla \widetilde{V}(x),f(x,u)\rangle	\geq \lambda \widetilde{V}(x) \quad\forall (x,u)\in C: \Anorm{x}\leq\varepsilon
\label{eq:lodescent}
\end{equation}
\item\label{item:mess2} Suppose that there exist a continuous function~$\sigma:(0,\infty)\to[0,\infty)$ and~$\varepsilon>0$ satisfying
\begin{equation}
		|f(x,u)|\leq \sigma\left(\Anorm{x}\right) \quad \forall (x,u)\in C: 0<\Anorm{x}\leq\varepsilon
\label{eq:finiteact}
\end{equation}
\end{enumerate}
Then, there exists a class-$\classK$ function~$\alpha$ such that every feasible solution pair~$(x,u)$ to $\HS$ satisfies \ref{item:flobound}, \ref{item:jumbound}, or \ref{item:termbound} 
with $J=0$. 
Furthermore, 
if, in addition, 
\begin{enumerate}[label={\alph*)},leftmargin=*]
\item 
\label{item:P1suff} $\T$ is such that
\begin{enumerate}[label={\roman*)},leftmargin=*]
\item
\label{item:P1suff-1}
 there exists $c>0$ s.t. $T+J\geq c$ for all $(T,J)\in{\cal T}$,
\end{enumerate}
then there exists a class-$\classK$ function~$\alpha$ such that every feasible solution pair to $\HS$ satisfies~\ref{item:bestbound};
\item 
\label{item:P3suff}
$\T$ is such that item~\ref{item:P1suff}.\ref{item:P3suff-1} holds,
\begin{enumerate}[label={\roman*)},leftmargin=*]
\item
\label{item:P3suff-1}
there exists a positive definite function $\widetilde{\alpha}_D$ such that
\begin{multline}\label{eqn:P3lowerBound}
	\Anorm{g(x,u)}\geq \widetilde{\alpha}_D(\Anorm{x}) \quad \forall (x,u)\in D
\end{multline}
and
\item
\label{item:P3suff-2}
there exists $\epsilon > 0$ such that each feasible pair flows for at least $\epsilon$ seconds,
\end{enumerate}
then \ref{item:flobound} or \ref{item:termbound} holds for each feasible 
solution pair to $\HS$ if 
$\widetilde{\alpha}_D(r) \geq r$ for each $r \geq 0$,
or there exists $\overline{J} \in \nats$
such that $J \leq \overline{J}$ for each $(T,J) \in {\cal T}$.
\end{enumerate}

\end{proposition}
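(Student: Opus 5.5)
The plan is to show that, near $\A$, the flow cannot bring $\Anorm{x}$ down from $r:=\Anorm{x(0,0)}$ to a fixed fraction of $r$ faster than some positive time $\tau(r)$. Then the first interval of flow of any feasible pair $(x,u)$, namely $[0,s_1]\times\{0\}$, ends in one of three ways, each matching one of \ref{item:flobound}, \ref{item:jumbound}, \ref{item:termbound}.

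\textbf{Lower bound under item~\ref{item:mess1}.} For $\Anorm{x}\leq\varepsilon$ I would integrate \eqref{eq:lodescent} along the absolutely continuous function $t\mapsto\widetilde V(x(t,0))$. The chain rule applies since $\widetilde V$ is $C^1$ near $\closure\Pi(C)$. A comparison argument gives $\widetilde V(x(t,0))\geq e^{\lambda t}\widetilde V(x(0,0))$ as long as the trajectory stays in the $\varepsilon$-neighborhood. With \eqref{eq:candidate} this yields
\[
\Anorm{x(t,0)}\geq \widetilde\alpha_1^{-1}\bigl(e^{-|\lambda| t}\widetilde\alpha_2(r)\bigr).
\]
In particular, on $t\in[0,\bar t\,]$ for a fixed $\bar t>0$ we get $\Anorm{x(t,0)}\geq\alpha_0(r):=\widetilde\alpha_1^{-1}(e^{-|\lambda|\bar t}\widetilde\alpha_2(r))$, possibly after replacing $\widetilde\alpha_2$ by $\max\{\widetilde\alpha_1,\widetilde\alpha_2\}$ so that the inverse is defined. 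The case $\widetilde\alpha_1(r)\leq\widetilde\alpha_2(r)$ makes this well defined.

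\textbf{Lower bound under item~\ref{item:mess2}.} I would reuse the argument from the proof of Theorem~\ref{thm:PDofJ} under \ref{item:flowjumpbound}. Set $\varphi(r)=\max_{[r/2,r]}\sigma$ and $t'(r)=r/(2\varphi(r))$. Then $\Anorm{x(t,0)}>r/2$ for $t<t'(r)$, since $|\dot x|\leq\varphi(r)$ while $\Anorm{x}\in[r/2,r]$. Starts with $\Anorm{x(0,0)}>\varepsilon$ are handled by applying the same estimate at level $\varepsilon$; trajectories must first enter the $\varepsilon$-band before going deeper. This gives a positive definite bound, which I would minorize by a class-$\classK$ function $\alpha$. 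Making that minorization valid, given that $t'$ and $\alpha_0$ are only positive definite, is the main technical obstacle.

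\textbf{Case split and item a).} With $\tau(r)>0$ and $\alpha(r)\leq\min\{\alpha_0(r),\tau(r)\}$, take the first flow interval of $(x,u)$:
\begin{itemize}
\item if $s_1\geq\tau(r)$, take $(t,j)=(0,0)$ and $(t',j')=(\tau(r),0)$, which gives \ref{item:flobound};
\item if $s_1<\tau(r)$ and a jump occurs at $s_1$, then $\Anorm{x(s_1,0)}\geq\alpha(r)$, which gives \ref{item:jumbound};
\item otherwise $(s_1,0)$ is terminal, so $J=0$, which gives \ref{item:termbound}.
\end{itemize}
For item a), if the first two cases fail then the domain is $[0,s_1]\times\{0\}$ with $s_1\geq c$, since $T+J\geq c$ forces $T\geq c$. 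Using $\min\{\alpha,c\}$ in place of $\alpha$ returns us to the flow case. A jump gives $j'-j=1$, so \ref{item:bestbound} holds with $\min\{\alpha,1\}$.

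\textbf{Item b).} I would iterate over jumps. Each jump maps $\Anorm{x}$ to at least $\widetilde\alpha_D(\Anorm{x})$, and each flow piece lasting less than $\tau$ preserves the bound $\alpha_0$. The assumption of at least $\epsilon$ seconds of flow, together with the bound $\overline J$ on the number of jumps, lets me compose finitely many such maps into a single positive definite bound. This holds until either $\epsilon$ seconds of flow are accumulated within a window, giving \ref{item:flobound}, or the terminal time is reached, giving \ref{item:termbound}. If instead $\widetilde\alpha_D(r)\geq r$, jumps never decrease the distance, so no composition over $\overline J$ is needed.
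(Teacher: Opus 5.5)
Your route is the paper's: a lower bound on $\Anorm{x}$ over an initial stretch of flow, then the same three-way split on the first flow interval, then for b) a composition of bounds over at most $\overline J$ jumps. The lower bound comes from comparison with $\widetilde V$ under item 1 and from the speed bound $\varphi$ under item 2, as in Lemmas~\ref{prop:Lyaplobound} and~\ref{prop:nice}. The split gives P2, P3, or P4 with $J=0$.

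\textbf{The gap: the jump case of a).} P1 requires $\Anorm{x(s,i)}\geq\alpha(r)$ at every $(s,i)\in\dom(x,u)$ with $t+j\leq s+i\leq t'+j'$. Your window from $(s_1,0)$ to $(s_1,1)$ therefore contains the post-jump point $x(s_1,1)=g(x(s_1,0),u(s_1,0))$. Neither item 1 nor item 2 constrains this point, since both involve only $f$. The alternative flow window $[0,s_1]\times\{0\}$ may have length zero.

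This cannot be repaired under the stated hypotheses. Take:
\begin{itemize}
\item $\A=\{0\}\subset\reals$, $C=D=\reals\times\reals$, $f\equiv 0$, $g(x,u)=u$, and $0\in X$;
\item $\T=([1,2]\times\{0\})\cup([0,1]\times\{1\})$.
\end{itemize}
Item 1 holds with $\widetilde V(x)=x^2$ and $\lambda=0$, item 2 holds with $\sigma\equiv 0$, and a.i holds with $c=1$. From $\xinitial=1$, the pair that jumps at $(0,0)$ with $u(0,0)=0$ and stops at $(0,1)\in\T$ is feasible. Since $x$ is off $\A$ only at $(0,0)$, every window of positive hybrid length meets $\A$. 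So P1 fails for every positive definite $\alpha$.

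Hence a) needs either an extra lower bound on $\Anorm{g}$, such as \eqref{eqn:P3lowerBound}, or a reading of P1 on the half-open window $t+j\leq s+i<t'+j'$, which drops the post-jump endpoint. Under that reading your jump case is correct. It is also the reading implicit in the paper's remark that P3 implies P1, and it is all that the cost estimate in the proof of Theorem~\ref{thm:PDofJ} uses. The paper's one-sentence argument for a) does not address this point either.

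\textbf{Smaller points.} First, your item-1 estimate has $\widetilde\alpha_1$ and $\widetilde\alpha_2$ swapped. The comparison gives $\widetilde\alpha_2(\Anorm{x(t,0)})\geq\widetilde V(x(t,0))\geq e^{\lambda t}\widetilde V(x(0,0))\geq e^{-|\lambda|t}\widetilde\alpha_1(r)$, hence $\Anorm{x(t,0)}\geq\widetilde\alpha_2^{-1}(e^{-|\lambda|t}\widetilde\alpha_1(r))$. At $t=0$ your formula would assert $\widetilde\alpha_1(r)\geq\widetilde\alpha_2(r)$, which is false unless \eqref{eq:candidate} is tight. With the corrected formula, your replacement of $\widetilde\alpha_2$ by $\max\{\widetilde\alpha_1,\widetilde\alpha_2\}$ is exactly what makes the inverse well defined.

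Second, you still have to discharge the proviso ``as long as the trajectory stays in the $\varepsilon$-neighborhood''. Restart the comparison at the left endpoint of each maximal subinterval of $[0,\bar t]$ on which $\Anorm{x(\cdot,0)}\leq\varepsilon$; there $\Anorm{x}$ equals $r$ or $\varepsilon$. This gives $\min\{\varepsilon,\widetilde\alpha_2^{-1}(e^{-|\lambda|\bar t}\widetilde\alpha_1(\min\{r,\varepsilon\}))\}$ on $[0,\bar t]$, which is the role of the intervals $[t^1,t^2]$ in the proof of Lemma~\ref{prop:Lyaplobound}.

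Third, the class-$\classK$ minorization you call the main obstacle is routine. Your bound $p$ is continuous and positive on $(0,\varepsilon]$ and constant beyond $\varepsilon$, so $r\mapsto\frac{r}{1+r}\min_{s\in[\min\{r,\varepsilon\},\varepsilon]}p(s)$ works.

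Fourth, in b) your shortcut that no composition is needed when $\widetilde\alpha_D(r)\geq r$ relies on item 2. There the decrease of $\Anorm{x}$ is bounded by $\varphi$ times the total flow time and jumps cannot decrease it; this is the case the paper's proof treats. Under item 1, each restart of the comparison after a jump loses a map of the form $\widetilde\alpha_2^{-1}(e^{-|\lambda|\bar t}\widetilde\alpha_1(\cdot))$, so you must still compose. That is always possible, because by Definition~\ref{def:Tstructure} every hybrid prediction horizon bounds the number of jumps.
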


The proof of Proposition~\ref{prop:mess} uses the following lemmas.  
These lemmas involve the data $(C,f,D,g)$ of $\HS$ and the closed set $\A$. Their proofs are in \IfJournal{\cite[Appendix A]{HybridMPCTR}.}{Appendix~\ref{sec:proofs}.}

\begin{lemma}
\label{prop:Lyaplobound}
Suppose that item~\ref{item:mess1} 
in Proposition~\ref{prop:mess} 
holds.
Then, there exists a 
class-$\classK$ 
function~$\alpha$ 
such that every solution pair~$(x,u)$ to $\HS$ is such that,
for each $T$ satisfying $(T,0) \in \dom (x,u)$, $\Anorm{x(t,0)}\geq\alpha(\Anorm{x(0,0)})$  for all~ $(t,0)\in\dom (x,u) \cap ([0,T]\times\{0\})$.
\end{lemma}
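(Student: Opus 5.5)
The plan is a sublevel-set (barrier) argument with $\widetilde V$ from item~\ref{item:mess1} of Proposition~\ref{prop:mess}, used only inside the tube $\{x:\Anorm{x}\le\varepsilon\}$, while outside the tube the distance is already at least $\varepsilon$. I will build $\alpha$ from $\widetilde\alpha_1$, $\widetilde\alpha_2$ and $\varepsilon$ alone, with no reference to the flow length, so that the bound is uniform in every $T$ with $(T,0)\in\dom(x,u)$. Since $\widetilde\alpha_2$ need not be onto, I set $\widehat\alpha(r):=\sup\{s\in[0,\varepsilon]:\widetilde\alpha_2(s)\le\widetilde\alpha_1(r)\}$, which is a class-$\classK$ minorant satisfying $\widetilde\alpha_2(\widehat\alpha(r))\le\widetilde\alpha_1(r)$. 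The candidate is
\[
\alpha(r):=\tfrac12\min\bigl\{\widehat\alpha(\min\{r,\varepsilon\}),\,\varepsilon\bigr\},
\]
replaced if necessary by a class-$\classK$ function below it.

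\emph{Step 1: monotonicity of $\widetilde V$ along flows in the tube.} Fix a solution pair $(x,u)$ and the first flow interval $I^0$. On any subinterval where $\Anorm{x(t,0)}\le\varepsilon$, the map $t\mapsto \widetilde V(x(t,0))$ is absolutely continuous, because $\widetilde V$ is $C^1$ near $\closure(\Pi(C))$ and $x$ is locally absolutely continuous. By \eqref{eq:lodescent} and (S1.3), its derivative is a.e.\ at least $\lambda\widetilde V$. When $\lambda\ge 0$ this gives $\frac{d}{dt}\widetilde V(x(t,0))\ge 0$, so $\widetilde V$ is nondecreasing along flow for as long as the solution stays in the tube. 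For general $\lambda$, Gr\"onwall only yields $\widetilde V(x(t,0))\ge e^{\lambda t}\widetilde V(x(0,0))$.

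\emph{Step 2: the first-exit/first-entry argument.} Put $r:=\Anorm{x(0,0)}$. If $r\ge\varepsilon$, let $\tau$ be the first time the trajectory enters the closed tube; otherwise take $\tau=0$. On $[0,\tau]$ the distance is at least $\varepsilon\ge\alpha(r)$. From $\tau$ on, Step~1 applies on each maximal sub-interval inside the tube. Each such sub-interval begins either at $t=0$, where $\widetilde V\ge\widetilde\alpha_1(r)$, or at a re-entry point, where $\Anorm{x}=\varepsilon$ and $\widetilde V\ge\widetilde\alpha_1(\varepsilon)$. In both cases $\widetilde V(x(t,0))\ge\widetilde\alpha_1(\min\{r,\varepsilon\})$ throughout the sub-interval, and the upper bound in \eqref{eq:candidate} then gives $\widetilde\alpha_2(\Anorm{x(t,0)})\ge\widetilde\alpha_1(\min\{r,\varepsilon\})$, that is, $\Anorm{x(t,0)}\ge\widehat\alpha(\min\{r,\varepsilon\})\ge\alpha(r)$. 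Continuity of $t\mapsto\Anorm{x(t,0)}$ handles the boundary times. Because none of these constants depends on $T$, the estimate holds simultaneously for every $T$ with $(T,0)\in\dom(x,u)$ and every $t\le T$.

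\emph{Main obstacle.} The difficulty is the sign of $\lambda$. For $\lambda\ge0$ the argument above closes directly. For $\lambda<0$, Step~1 only gives exponential decay, and a purely infinitesimal argument cannot exclude $\widetilde V$, and hence $\Anorm{x}$, approaching zero over arbitrarily long flows. I would first check whether \eqref{eq:lodescent} combined with \eqref{eq:candidate} forces additional structure, for instance by passing to $W:=e^{-\lambda\,\cdot}$-weighted versions or by replacing $\widetilde V$ with a modified function. If no such reduction works, I would state the lemma for flow lengths bounded by $t_0$ from Definition~\ref{def:Tstructure}, which is all that is used in Proposition~\ref{prop:mess} for feasible pairs with $J=0$, taking $\alpha(r)\,e^{\lambda t_0/2}$-type factors via the Gr\"onwall bound. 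In that case I would flag that the uniform-in-$T$ reading requires $\lambda\ge0$.
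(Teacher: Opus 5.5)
Your proposal is correct and follows essentially the paper's route: a Gr\"onwall/comparison estimate for $\widetilde V$ on each maximal sub-interval of the first flow interval lying in a tube around $\A$, started either at $t=0$ or at an entry point where the distance equals the tube radius, with the trivial bound outside the tube. Your worry about $\lambda<0$ is well founded, and the paper resolves it the way your fallback does: its bound is $\min\{\widetilde{\alpha}_2^{-1}(\min\{1,\exp(\lambda T)\}\widetilde{\alpha}_1(\Anorm{x(0,0)})),\delta\}$, so $\alpha$ depends on $T$ when $\lambda<0$ (the $T$-uniform reading does fail then, e.g., $\dot x=-x$ with $\widetilde V(x)=x^2$ and $\lambda=-2$), and this $T$-dependent version suffices for Proposition~\ref{prop:mess} because feasible pairs flow for at most $t_0$.
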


\begin{lemma}
\label{prop:nice}
Suppose that item~\ref{item:mess2} in Proposition~\ref{prop:mess} holds.
Then, there exists a class-$\classK$ function~$\alpha$ such that every solution pair~$(x,u)$ to $\HS$ satisfies $\Anorm{x(t,0)}\geq\alpha(\Anorm{x(0,0)})$ 
for all~$t \in [0, \alpha(\Anorm{x(0,0)})]$ such that $(t,0)\in\dom (x,u)$.
\end{lemma}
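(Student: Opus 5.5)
The statement to prove is the last one, Lemma~\ref{prop:nice}. The idea is a speed-bound argument: while the state is near $\A$ but not too close, $|f|$ is uniformly bounded, so $\Anorm{x}$ cannot drop quickly during flow. We then pick $\alpha$ small enough that the state cannot get within $\alpha(r)$ of $\A$ before time $\alpha(r)$.

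\textbf{Step 1: a monotone speed bound.} For $r\in(0,\varepsilon]$ define
\[
M(r):=1+\max_{s\in[r/2,\varepsilon]}\sigma(s).
\]
This is finite because $\sigma$ is continuous on the compact interval $[r/2,\varepsilon]\subset(0,\infty)$. It is also nonincreasing and continuous in $r$, since it is a maximum of a continuous function over an interval whose endpoint moves continuously. For $r>\varepsilon$, set $M(r):=M(\varepsilon)$.

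\textbf{Step 2: definition of $\alpha$.} Take a class-$\classK$ function $\alpha$ with
\[
\alpha(r)\le \min\Bigl\{\frac{r}{4},\ \frac{\min\{r,\varepsilon\}}{4M(r)}\Bigr\}.
\]
The right-hand side is continuous, positive for $r>0$, and tends to $0$ as $r\to0$. It may not be increasing, so we replace it by a class-$\classK$ minorant, for instance $\alpha(r):=\inf_{s\ge r}$ of the bound times $r/(1+r)$, or the standard construction of a $\classK$ lower bound for a continuous positive definite function.

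\textbf{Step 3: the flow estimate.} Fix a solution pair with $r:=\Anorm{x(0,0)}>0$ (the case $r=0$ is trivial since $\alpha(0)=0$), and let $\rho:=\min\{r,\varepsilon\}$. Suppose, for contradiction, that some $t\le\alpha(r)$ with $(t,0)\in\dom(x,u)$ has $\Anorm{x(t,0)}<\alpha(r)\le\rho/4$. Let $t_*$ be the last time in $[0,t]$ at which $\Anorm{x(\cdot,0)}\ge\rho/2$; it exists by continuity of $x$ and of the distance, and $\Anorm{x(0,0)}=r\ge\rho$ with $\rho\geq\rho/2$. On $(t_*,t]$ we have $\Anorm{x}\in(0,\rho/2)\subset(0,\varepsilon]$. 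Positivity holds because the distance stays at least $\alpha(r)>0$ up to the first entry below that level, so we run the argument up to that first entry time instead. Hence, by (S1.3), for almost all $s$ in that interval we have $(x,u)\in C$, and \eqref{eq:finiteact} gives
\[
|\dot x(s,0)|\le\sigma(\Anorm{x(s,0)})\le M(r).
\]
The distance function is $1$-Lipschitz, so over an interval of length at most $\alpha(r)$ the decrease of $\Anorm{x(\cdot,0)}$ is at most $M(r)\alpha(r)\le\rho/4$. Thus $\Anorm{x}\ge\rho/2-\rho/4=\rho/4\ge\alpha(r)$, which is a contradiction.

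The main obstacle is the case $r>\varepsilon$, where $\sigma$ gives no control of $f$ far from $\A$. This is handled as above: the state must first cross the level $\rho/2\le\varepsilon/2$, and from the last crossing on, the bound applies. The remaining care is in defining $t_*$ as the first time the distance drops to $\alpha(r)$, so the state stays in the region where \eqref{eq:finiteact} holds.
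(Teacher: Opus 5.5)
Step~3 applies the speed bound on the wrong range of distances. In both cases $r\le\varepsilon$ and $r>\varepsilon$, your constant satisfies $M(r)-1=\max_{s\in[\rho/2,\varepsilon]}\sigma(s)$. So it controls $|f|$ only where $\Anorm{x}\in[\rho/2,\varepsilon]$. You take $t_*$ to be the last time the distance is $\ge\rho/2$, so on $(t_*,t]$ the distance lies in $(0,\rho/2)$, which is exactly the range $M(r)$ says nothing about. The inequality $\sigma(\Anorm{x(s,0)})\le M(r)$ is therefore unjustified there, and it is false in general: $\sigma$ is only assumed continuous on $(0,\infty)$ and may blow up at $0$. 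For example, with $\sigma(s)=1/s$ and $r<\min\{2,\varepsilon\}$, you get $M(r)=1+2/r$, and $r/8\in(\alpha(r),\rho/2)$, but $\sigma(r/8)=8/r>M(r)$. Your remark that ``from the last crossing on, the bound applies'' is precisely the step that fails. Stopping at the first entry below $\alpha(r)$ does not rescue it, since $[\alpha(r),\rho/2)$ is still outside $[\rho/2,\varepsilon]$.

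The repair is to run the crossing argument one level higher, as the paper does.
\begin{itemize}
\item Let $t_*$ be the last time in $[0,t]$ with $\Anorm{x(t_*,0)}\ge\rho$. The distance equals $\rho$ there, because $\Anorm{x(t,0)}<\alpha(r)<\rho$.
\item Let $t_2\in(t_*,t]$ be the first later time with $\Anorm{x(t_2,0)}=\rho/2$.
\item On $[t_*,t_2]$ the distance lies in $[\rho/2,\rho]\subset[\rho/2,\varepsilon]$, so positivity is automatic and \eqref{eq:finiteact} gives $|\dot x|\le M(r)-1$ a.e.
\item Hence $\rho/2\le (M(r)-1)(t_2-t_*)<M(r)\,t_2$, so $t\ge t_2>\rho/(2M(r))\ge 2\alpha(r)$. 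This contradicts $t\le\alpha(r)$.
\end{itemize}
With this change, your $\alpha$ from Step~2 works unchanged.

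The corrected argument is essentially the paper's proof. The paper sets $\varphi(r)=\max_{[r/2,r]}\sigma$ and uses $\tau(r)=r/(2\varphi(r))$ as a lower bound on the flow time from the $r$-level to the $r/2$-level. It composes with $\rho=\min\{r,\varepsilon\}$ and takes $\alpha$ to be a class-$\mathcal{K}$ lower bound of $\rho$ and $\tau\circ\rho$. Your window $[r/2,\varepsilon]$ instead of $[r/2,r]$ makes $M$ monotone. Continuity is then elementary, with no appeal to Berge's maximum theorem, and the class-$\mathcal{K}$ minorant in Step~2 is easy to build. That is a mild simplification once Step~3 is fixed.
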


We are now ready to present the proof of Proposition~\ref{prop:mess}.

\medskip
\begin{IEEEproof}
If either item~\ref{item:mess1} or item~\ref{item:mess2} holds, an application of  Lemma~\ref{prop:Lyaplobound} or Lemma~\ref{prop:nice} implies that there exists a class-$\classK$ function~$\alpha'$ such that every solution pair~$(x,u)$ to $\HS$ satisfies the following: for each $T$ such that $(T,0)\in\dom (x,u)$, we have $\Anorm{x(t,0)}\geq\alpha'(\Anorm{x(0,0)})$ for all~$t\in [0, \min\{T,\alpha'(\Anorm{x(0,0)})\}]$ such that~$(t,0)\in\dom (x,u)$. Take a feasible solution pair $(x,u)$. 
We have the following cases:
\begin{enumerate}[label=\Roman*),leftmargin=*]
\item
\label{item:prop:messProof-I}
If the solution jumps at $(t_1,0)$ for some~$t_1\leq\alpha'(\Anorm{x(0,0)})$, then~$(t_1,1)\in \dom (x,u)$ and \ref{item:jumbound} holds with~$\alpha=\alpha'$. 
\item
\label{item:prop:messProof-II}
Otherwise, either~$(\alpha'(\Anorm{x(0,0)}),0)\in\dom(x,u)$, in which case the solution evolves continuously beyond $x(t_1,0)$ and satisfies~\ref{item:flobound} with $j=j'=0$, or 
\item
\label{item:prop:messProof-III}
it terminates at time~$(T,0)$ with~$T\leq \alpha'(\Anorm{x(0,0)})$ and, hence, satisfies~\ref{item:termbound} with $J=0$, both with $\alpha=\alpha'$. 
\end{enumerate}

In the case of the time horizon~${\cal T}$ being bounded away from the origin considered in item \ref{item:P1suff}.\ref{item:P1suff-1}, \ref{item:bestbound} holds with $\alpha=\alpha'$ since each feasible solution pair $(x,u)$ is such that there exists $(t',j') \in \dom (x,u)$ such that $t'+j' \geq c > 0$, which, in turn, implies that there exists $(t,j) \in \dom (x,u)$ such that $t+j \leq t' + j'$ and $t' -t + j - j' \geq \alpha'(\Anorm{x(0,0)})$.

Finally, \ref{item:flobound} or \ref{item:termbound} hold under the conditions in
\ref{item:P3suff}. 
Without loss of generality, suppose that $\sigma$ in \eqref{eq:finiteact} is such that there exists~$c>0$ such that~$\sigma(r)\geq c$ for all~$r>0$. With $\varepsilon$ coming from \eqref{eq:finiteact}, let
\begin{equation}\label{eqn:varphi}
	\varphi(r):=\max_{r'\in[r/2,r]}\sigma(r') \quad \forall r \in(0,\varepsilon]
\end{equation}
which satisfies $\varphi(r)\geq c$ for all $r \in(0,\varepsilon]$.
Continuity of $\varphi$ follows via Berge's maximum theorem~\cite[Theorem~1.4.16]{aubin} by showing 
upper and lower semicontinuity of the set-valued map mapping each $r \in[0,\varepsilon]$ to
the compact interval~$[r/2,r]$.
Indeed, lower semicontinuity of the mapping can be verified directly, while upper semicontinuity follows from the fact that the mapping is outer semicontinuous (since its graph is closed relative to~$(0,\varepsilon]\times\reals$~\cite[Lemma~5.10]{65}) and locally bounded, due to~\cite[Lemma~5.15]{65}. Let~$\tau(0):=0$ and
\IfJournal{$\tau(r):= \frac{r}{2\varphi(r)} \text{if }r\in(0,\varepsilon]$, $\tau(r):= \frac{\varepsilon}{2\varphi(\varepsilon)} \text{if }r\in(\varepsilon,\infty)$}
{\[
	\tau(r):=
	\begin{cases}
	\frac{r}{2\varphi(r)} 										& \text{if }r\in(0,\varepsilon]\\
	\frac{\varepsilon}{2\varphi(\varepsilon)} & \text{if }r\in(\varepsilon,\infty)
	\end{cases}
\]}
and similarly, for every~$r\geq 0$, let~$\rho(r):=\min\{r,\varepsilon\}$. Note that~$\tau$ is continuous,
 positive definite,  upper bounded by $\frac{\varepsilon}{2\varphi(\varepsilon)}$, and lower bounded by zero. Hence, there exists a continuous function $\widetilde{\tau}:\realsgeq\to\realsgeq$ that is positive definite, increasing, and such that~$\widetilde{\tau}(r)\leq\tau(r)$ for all~$r\geq 0$.  It follows that, for any continuous solution pair~$(x,u)$ to $\HS$ with~$\Anorm{x(0,0)}> 0$, $t< \widetilde{\tau}(\rho(\Anorm{x(0,0)}))$ satisfying~$(t,0)\in\dom(x,u)$ implies~$\Anorm{x(t,0)}\geq\frac{\rho(\Anorm{x(0,0)})}{2}$.
To show this implication, first note that, by definition of $\widetilde{\tau}$, 
$\widetilde{\tau}(\rho(\Anorm{x(0,0)}))$ is upper bounded by $\tau(\rho(\Anorm{x(0,0)}))$ and that, since $\rho$ is upper bounded by $\varepsilon$, $\widetilde{\tau}(\rho(\Anorm{x(0,0)}))\varphi(\rho(\Anorm{x(0,0)})) \leq \frac{\rho(\Anorm{x(0,0)})}{2}$.
Using \eqref{eq:finiteact} and the construction of $\varphi$ above, we have $\Anorm{x(t,0)} \geq \Anorm{x(0,0)} - \varphi(\Anorm{x(0,0)})t$ for all $(t,0) \in \dom(x,u)$ such that $0<\Anorm{x(t,0)} \leq \varepsilon$.
Then, the lower bound on $\Anorm{x(t,0)}$ can be obtained by evaluating the latter inequality at $t = \widetilde{\tau}(\rho(\Anorm{x(0,0)}))$, leading to $\Anorm{x(t,0)}\geq\frac{\rho(\Anorm{x(0,0)})}{2}$.
In other words,~$\widetilde{\tau}\circ\rho(r)$ is a lower bound on the ordinary time to reach the~$(r/2)$-disc around~$\A$ from the~$r$-disc around~$\A$ via flows, for each $r \geq 0$. Finally, for every~$r\geq 0$, let~$\hat{\alpha}_D(r):=\widetilde{\alpha}_D(\rho(r)/2)$, and note that~$\hat{\alpha}_D$ and~$\rho$ are continuous, nondecreasing, and positive definite.

Let~$(x,u)$ be an optimal solution pair and~$\{t_j\}_{j=0}^{J+1}$ be a sequence such that~$\dom(x,u)=\cup_{j=0}^{J}([t_j,t_{j+1}]\times\{j\})$, with~$\Anorm{x(0,0)}>0$. First, suppose~$\widetilde{\alpha}_D(r)\geq r$ for all~$r\geq 0$. Then, following the same arguments as before, one can deduce that for any~$t< \widetilde{\tau}(\rho(\Anorm{x(0,0)}))$ and~$j\in\nats$ satisfying~$(t,j)\in\dom(x,u)$, we have~$\Anorm{x(t,j)}\geq \frac{\rho(\Anorm{x(0,0)})}{2}$, due to the fact that the distance of~$x$ to~$\A$ does not decrease during jumps, in light of \eqref{eqn:P3lowerBound}. Therefore, either there exists~$(s,i)\in\dom(x,u)$ such that~$s=\widetilde{\tau}(\rho(\Anorm{x(0,0)}))$ and every~$(t,j)\in\dom x$ with $t+j\leq s+i$ satisfies~$\Anorm{x(t,j)}\geq\frac{\rho(\Anorm{x(0,0)})}{2}$, or letting~$T:=t_{J+1}$,~$x(T,J)\geq \frac{\rho(\Anorm{x(0,0)})}{2}$. In other words, \ref{item:flobound} or \ref{item:termbound} holds~$\alpha=\alpha_0$, where
\begin{equation}
	\alpha_0(r):=\min\left\{\widetilde{\tau}(\rho(r)),\frac{\rho(r)}{2}\right\} \quad\forall r\geq 0
\label{eq:alpha0}
\end{equation}

Next, suppose that there exists~$J_{\max}\in\nats$ such that for every~$(T,J)\in\T$,~$J\leq J_{\max}$, but $\widetilde{\alpha}_D(r)\geq r$ does not necessarily hold for all~$r\geq 0$. Let~$T:=t_{J+1}$. If there exists~$j\in\{0,1,\dots, J\}$ such that~$t_{j+1}-t_j\geq \widetilde{\tau}(\rho(\Anorm{x(t_{j},j)}))$, let~$i$ be the smallest such~$j$, otherwise, let~$i:=J+1$. Then,
\IfJournal{$\Anorm{x(t_j,j)} \geq \widetilde{\alpha}_D(\Anorm{x(t_j,j-1)})
\geq \widetilde{\alpha}_D\left(\rho\left(\frac{\Anorm{x(t_{j-1},j-1)}}{2}\right)\right)=\hat{\alpha}_D(\Anorm{x(t_{j-1},j-1)})$}{
\begin{eqnarray*}
	\Anorm{x(t_j,j)}& &\hspace{-0.2in}\geq \widetilde{\alpha}_D(\Anorm{x(t_j,j-1)})\\
		& &\hspace{-0.7in} \geq \widetilde{\alpha}_D\left(\rho\left(\frac{\Anorm{x(t_{j-1},j-1)}}{2}\right)\right)=\hat{\alpha}_D(\Anorm{x(t_{j-1},j-1)})
\end{eqnarray*}}
for all~$j\in\{1,2,\dots,\min\{i,J\}\}$, and consequently,
\begin{equation}
\Anorm{x(t_j,j)}\geq \hat{\alpha}_D^{j}(\Anorm{x(0,0)})
 \quad \forall j\in\{0,1,\dots,\min\{i,J\}\}
\label{eq:badflow}
\end{equation}
where~$\hat{\alpha}_D^{j}$ denotes the~$j$-times composition of~$\hat{\alpha}_D$, e.g.,~$\hat{\alpha}_D^{0}$ is the identity function and~$\hat{\alpha}_D^{2}=\hat{\alpha}_D\circ\hat{\alpha}_D$. In addition, if~$i\leq J$, since~$\rho$ is nondecreasing, by~\eqref{eq:badflow},
\IfJournal{$\Anorm{x(t,i)}\geq \frac{\rho(\Anorm{x(t_i,i)})}{2}\geq \frac{\rho(\hat{\alpha}_D^{i}(\Anorm{x(0,0)})}{2}$}{
\[
	\Anorm{x(t,i)}\geq \frac{\rho(\Anorm{x(t_i,i)})}{2}\geq \frac{\rho(\hat{\alpha}_D^{i}(\Anorm{x(0,0)})}{2}
\]}
for all $t\in[t_i,t_i+\widetilde{\tau}(\rho(\Anorm{x(t_{i},i)}))]$,
which, as~$\widetilde{\tau}$ is increasing, implies that, 
for all  $t\in[t_i,t_i+\widetilde{\tau}(\rho(\hat{\alpha}_D^{i}(\Anorm{x(0,0)})))]$,
\begin{equation}
	\Anorm{x(t,i)}\geq \frac{\rho(\hat{\alpha}_D^{i}(\Anorm{x(0,0)})}{2}
\label{eq:niceflow}
\end{equation}
On the other hand, if~$i=J+1$, then again by~\eqref{eq:badflow},
\begin{eqnarray}\nonumber
	\Anorm{x(T,J)}=\Anorm{x(t_{J+1},J)}&\geq& \frac{\rho(\Anorm{x(t_J,J)})}{2}\\
	&\geq& \frac{\rho({\hat{\alpha}_D^{J}(\Anorm{x(0,0)})}}{2}
\label{eq:endpoint}
\end{eqnarray} 
\IfJournal{Given~$r\geq 0$, let
$\alpha_1(r):=\min_{j\in\{0,1,\dots,J_{\max}\}}\frac{\rho(\hat{\alpha}_D^{j}(r))}{2}$,
$\alpha_2(r):=\min_{j\in\{0,1,\dots,J_{\max}\}}\widetilde{\tau}(\rho(\hat{\alpha}_D^{j}(r)))$.
}{Given~$r\geq 0$, let
\begin{equation*}
	\begin{aligned}
	\alpha_1(r)		&:=\min_{j\in\{0,1,\dots,J_{\max}\}}\frac{\rho(\hat{\alpha}_D^{j}(r))}{2},\\
	\alpha_2(r)		&:=\min_{j\in\{0,1,\dots,J_{\max}\}}\widetilde{\tau}(\rho(\hat{\alpha}_D^{j}(r)))
	\end{aligned}
\end{equation*}
}
Then, if~$i\leq J$, by~\eqref{eq:niceflow},~$\Anorm{x(t,i)}\geq \alpha_1(\Anorm{x(0,0)})$ for every~$t\in[t_i,t_i+\alpha_2(\Anorm{x(0,0)})]$ satisfying~$(t,i)\in\dom(x,u)$. Otherwise, by~\eqref{eq:endpoint},~$\Anorm{x(T,J)}\geq \alpha_2(\Anorm{x(0,0)})$.  Combining this bound with the one in the previous case when~$\widetilde{\alpha}_D(r)\geq r$ for all~$r\geq 0$, it follows that~\ref{item:flobound} or \ref{item:termbound}  holds with~$\alpha(r)=\min_{k\in\{0,1,2\}}\alpha_k(r)$ for all~$r\geq 0$, where~$\alpha_0$ is defined in~\eqref{eq:alpha0}.
\end{IEEEproof}

\begin{remark}
Condition \eqref{eqn:P3lowerBound} is to be checked for all points
$(x,u)$ in $D$. 
When the interest is in 
\ref{item:flobound} holding for feasible 
and complete solution pairs, 
\eqref{eqn:P3lowerBound} can be relaxed and only be checked at points $(x,u) \in D$
such that $g(x,u)\in\closure(\Pi(C))\cup \Pi(D)$.
\end{remark}

\begin{example}[Bouncing Ball Control (revisited)]
\label{ex:bouncingballflows}
Consider the data of the bouncing ball system in Example~\ref{ex:bouncingball} and the total energy function~$W$ defined in \eqref{eqn:BBenergy}.  Following the control goal stated in Example~\ref{ex:bouncingball-revisited-2}, let
\begin{equation}\label{eqn:A-BB}
\A = \{x \in \reals^2:x_1\geq 0, W(x) = c^*\}
\end{equation}
with $c^* = \gamma h$ for some~$h\geq 0$. This definition of the set $\A$ corresponds to the limit cycle of the autonomous bouncing ball starting from~$x(0,0) = (h,0)$ when~$\lambda=1$. Since~$\A$ is compact and~$W$ is positive definite (on the domain~$x_1\geq 0$), the function~$x\mapsto (W(x)-\gamma h)^2=:\widetilde{V}(x)$ satisfies~\eqref{eq:candidate} for some~$\varepsilon>0$ and class-${\cal K}_{\infty}$ functions~$\widetilde{\alpha}_1$ and~$\widetilde{\alpha}_2$, as~$W$ is continuous. In fact, these functions can be chosen independently of~$\varepsilon>0$, due to \textit{radial unboundedness} of~$W$. Moreover, $\langle\nabla \widetilde{V}(x),f(x,u)\rangle=0$ for all~$(x,u)\in C$, so~\eqref{eq:lodescent} holds with arbitrarily large~$\varepsilon>0$.
\end{example}

Unlike item~\ref{item:mess1} in Proposition~\ref{prop:mess},
\eqref{eq:finiteact} does allow for finite-time convergence to~$\A$ during flows. Existence of a continuous function~$\sigma$ and~$\varepsilon>0$ satisfying~\eqref{eq:finiteact} is guaranteed when~$C = C'\times U$ for a closed set~$C'\subset \reals^n$ and compact set~$U\subset \reals^m$, provided~$\A$ is compact and~$f$ is continuous.

\begin{example}[Sample-And-Hold Control (revisited)]
\label{ex:sampleflow}
Given a compact set $U\subset\reals^{m_z}$, we revisit the sample-and-hold control system in Example~\ref{ex:sample} with flow and jump sets
\IfJournal{
$C=\{(x,u):\eta\in U,\tau_s\in[0,T_s]\}$ and $D=\{(x,u):\eta\in U,\tau_s=T_s,u\in U\}$}{\begin{equation*}
	\begin{aligned}
		C&=\{(x,u):\eta\in U,\tau_s\in[0,T_s]\}\\
		D&=\{(x,u):\eta\in U,\tau_s=T_s,u\in U\}
	\end{aligned}
\end{equation*}}
Note that the only difference between these sets and those in Example~\ref{ex:sample} is that the state component $\eta$ and the input $u$ are restricted to the compact set $U$.
Using these sets, we consider the compact set~$\A=\{0\}\times U\times[0,T_s]$, requiring convergence of $z$ and $\eta$ to zero. Item~\ref{item:mess2} in Proposition~\ref{prop:mess} holds as the flow map~$f$ is affine and does not depend on~$u$. 
We also note that~$\Anorm{g(x,u)}=|z|=\Anorm{x}$ for every~$(x,u)\in D$, so item~\ref{item:P3suff}.\ref{item:P3suff-1} holds with~$\widetilde{\alpha}_D$ as the identity. Item~\ref{item:P3suff}.\ref{item:P3suff-2} holds since each solution flows for some nonzero amount of ordinary time.  Then, by Proposition~\ref{prop:mess}, either \ref{item:flobound} or \ref{item:termbound} hold.
\end{example}

\NotForJournal{
\begin{example}[Bouncing Ball Control (revisited)]
\label{ex:bouncingballflow}
Given a compact set $U\subset\reals$, consider the data
\IfJournal{$C			=\{(x,u)\in\reals^2\times\reals:x_1\geq 0,u\in U\}$,
		$f(x,u)=(x_2,-\gamma+u)$,
		$D			=\{(x,u)\in\reals^2\times\reals:x_1=0 ,x_2\leq 0\}$,
		$g(x,u)=(0,-\lambda x_2)$.}{
\begin{equation}
	\begin{aligned}
		C			&=\{(x,u)\in\reals^2\times\reals:x_1\geq 0,u\in U\}\\
		f(x,u)&=(x_2,-\gamma+u)\\
		D			&=\{(x,u)\in\reals^2\times\reals:x_1=0 ,x_2\leq 0\}\\
		g(x,u)&=(0,-\lambda x_2)
	\end{aligned}
\end{equation}}
This data represents a modification of the bouncing ball system in Examples~\ref{ex:bouncingball} and~\ref{ex:bouncingballflows}, where the input~$u$ affects flows instead of jumps. Suppose~$\lambda>0$ and, similar to the definition of $\A$ in \eqref{eqn:A-BB} for  $c^* = \gamma h$, let
\IfJournal{$\A=\{x:x_1\geq 0, W(x)=c^*\}\cup\left\{(0,-\sqrt{2\gamma h}/\lambda)\right\}$.}{
\[
	\A=\{x:x_1\geq 0, W(x)=c^*\}\cup\left\{(0,-\sqrt{2\gamma h}/\lambda)\right\}.
\]}
Item~\ref{item:mess2} in 
Proposition~\ref{prop:mess}
 holds by continuity of~$f$ and compactness of~$U$, since~$\A$ is compact. 
To show that \ref{item:flobound} holds, it suffices to note that the jump map~$g$ is continuous, and~$\Anorm{g(x,u)}$ tends to infinity in~$D$ as~$\Anorm{x}$ tends to infinity, and importantly,~$\Anorm{g(x,u)}=0$ if and only if~$x=(0,-\sqrt{2\gamma h})\in D$. Note that if the point~$z=(0,-\sqrt{2\gamma h}/\lambda)\in D$ were to be excluded from~$\A$, this condition would not hold since~$W(g(z,u))=c^*$. Hence, item~\ref{item:P3suff}.\ref{item:P3suff-1} holds
and, by Proposition~\ref{prop:mess}, either \ref{item:flobound} or \ref{item:termbound} hold.
\end{example}
}

\section{Asymptotic Stability}
\label{sec:stability}

In this section, we establish asymptotic stability of the set $\A$ for the closed-loop system resulting from controlling the hybrid plant \eqref{eqn:Hp} with our hybrid MPC algorithm.  Since such a closed-loop system cannot be written as an autonomous system, we introduce the following asymptotic stability notion (cf. \cite{65,220}).

\begin{definition}
\label{def:stab}
The hybrid MPC algorithm is said to render the set~$\A$ asymptotically stable for~$\HS$ if~$\HS$ has unique state trajectories and the following hold:
\begin{enumerate}
	\item[(E)] There exists~$\delta>0$ such that for each~$\xinitial\in\Pi(C\cup D)$ satisfying~$\Anorm{\xinitial}\leq\delta$, there exists a solution pair~$(x,u)$ generated by the hybrid MPC algorithm starting from~$\xinitial$.
	\item[(S)] For each~$\varepsilon>0$, there exists~$\delta>0$ such that for each solution pair~$(x,u)$ generated by the hybrid MPC algorithm,~$\Anorm{x(0,0)}\leq\delta$ implies~$\Anorm{x(t,j)}\leq\varepsilon$ for all~$(t,j)\in\dom(x,u)$.
	\item[(A)] Each maximal solution pair~$(x,u)$ generated by the hybrid MPC algorithm is complete and satisfies $\lim_{t+j\to\infty, (t,j) \in \dom (x,u)}\Anorm{x(t,j)}=0$.
\end{enumerate}
\end{definition}

Item~(E) in Definition~\ref{def:stab} implies that,
from a neighborhood of $\A$,
 either no solution pair exists or, if $\xinitial\in\Pi(C\cup D)$, a solution pair~$(x,u)$ can be generated by the hybrid MPC algorithm. For this property to hold, it is clear that a solution to Problem~\ref{prob:optimum} must exist from those points---see Remark~\ref{remark:Existence}.  While not emphasized in Definition~\ref{def:stab}, note that in general, from any initial condition~$\xinitial\in\X$, there might be infinitely many solution pairs generated by the hybrid MPC algorithm. This is for two reasons. First, the minimizers of the cost functional~$\J$ might be nonunique; that is, given~$\xinitial\in\X$, there may be multiple solution pairs achieving the infimum in~\eqref{eq:value}. Second, assuming that the recursive feasibility property stated in Proposition~\ref{prop:T1T2} holds, the optimal control input~$u_i$ computed at optimization time $(T_i,J_i)$---see Section~\ref{sec:Implementation}---
 can be updated at \textit{any (hybrid) time} before the domain of $u_i$ ends, that is, at any~$(T_i+t,J_i+j)$, as long as~$(t,j)\in\dom u_i$.

\begin{theorem}
\label{thm:AS-MPC}
Given a hybrid plant $\HS = (C,f,D,g)$ as in \eqref{eqn:Hp},
the data $(\T,L_{C},L_{D},V,X)$ defining Problem~\ref{prob:optimum} with
$\T$ having
$t_0>0$ and~$J\geq 1$,
and a closed set $\A \subset \reals^n$, 
suppose that Assumptions~\ref{assmp:uniqueness}, \sout{\ref{assmp:OCP}}, \ref{assmp:OCP2}, and \ref{assmp:CLF} hold. 
Furthermore, suppose that there exists a class-$\classK$ function
$\alpha^*:\realsgeq\to\realsgeq$ such that 
the value function $\J^*$ in \eqref{eq:value} satisfies\footnote{Sufficient conditions for \eqref{eqn:ValueFunctionLowerBound} to hold are given in Section~\ref{sec:posdef-Sufficient}.}
\eqref{eqn:ValueFunctionLowerBound}
for each $\xinitial\in \X$ and that 
an optimal solution pair~$(x,u)\in\widehat{\sol}_{\HS}(\xinitial)$ to Problem~\ref{prob:optimum} exists.\footnote{Sufficient conditions for such existence are given in \cite{286}; see Remark~\ref{remark:Existence}.}
Then, the hybrid MPC algorithm renders the set~$\A$ asymptotically stable for the hybrid system~$\HS$  if any of the following statements are true: 
	\begin{enumerate}[label={(AS\arabic*)},leftmargin=*]
		\item\label{item:AS}
		$L_C$ and $L_D$ satisfy \ref{item:LC} and \ref{item:LD}, respectively;
		\item\label{item:AS-RelaxJump} Each solution pair generated by the hybrid MPC algorithm has persistent jumps and $L_D$ satisfies \ref{item:LD} with $\alpha_D$ being $\classKinfty$;
		\item\label{item:AS-RelaxFlow} Each solution pair generated by the hybrid MPC algorithm has persistent flows and $L_C$ satisfies \ref{item:LC} with $\alpha_C$ being $\classKinfty$.
	\end{enumerate}
\end{theorem}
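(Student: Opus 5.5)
The plan is to use the value function $\J^*$ as a Lyapunov-like function along solution pairs generated by the hybrid MPC algorithm. Each such pair $(x,u)$ is a concatenation of truncated optimal pairs $(x_i,u_i)$ over $[T_i+J_i,T_{i+1}+J_{i+1})$.

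Existence (E) comes first. Take $\xinitial\in X$, or more generally $\xinitial\in\X$. Proposition~\ref{prop:aroundA} gives $X\subset\X$, and an optimal pair exists by hypothesis. The main obstacle is that (E) must hold near $\A$, so I need $\Pi(C\cup D)\cap(\A+\delta\ball)\subset\X$. I would try to obtain this from the terminal set containing a neighborhood of $\A$ relative to $\Pi(C\cup D)$. If that is not available, I would read (E) as applying to feasible points near $\A$.

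Recursive feasibility comes next. By Proposition~\ref{prop:T1T2}, $x_i(T_{i+1}-T_i,J_{i+1}-J_i)\in\X$, so Problem~\ref{prob:optimum} can be solved again and the iteration continues indefinitely. Since $\T$ has $t_0>0$ and $J\ge1$, each $u_i$ has a domain of length $T+J>0$. Combined with the unbounded sequence $\{T_i+J_i\}$ in Definition~\ref{def:solgenMPC}, this gives completeness of maximal solutions. Uniqueness of state trajectories follows from Assumption~\ref{assmp:uniqueness} and Proposition~\ref{prop:uniqueness}.

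The core step is a decrease inequality. Lemma~\ref{lem:descent}, applied to each optimal piece and chained across re-optimization times, gives
\[
\J^*(x(t,j))\le \J^*(x(0,0))-\int L_C-\sum L_D ,
\]
with the integral and sum taken along $(x,u)$ up to $(t,j)$. In particular $\J^*(x(t,j))$ is bounded by a nonincreasing function of hybrid time.

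Stability (S) is proved on the sublevel sets of $\J^*$. From \eqref{eqn:ValueFunctionLowerBound} we have $\alpha^*(\Anorm{x(t,j)})\le\J^*(x(t,j))\le\J^*(x(0,0))$. By Proposition~\ref{proposition:continuityValueFunction}, given $\varepsilon$ we can choose $\delta$ so that $\J^*(x(0,0))<\alpha^*(\varepsilon)$, and then $\Anorm{x(t,j)}\le\varepsilon$ for all $(t,j)$.

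Attractivity (A) is split into the three cases. Since $\J^*\ge0$, the accumulated stage cost is finite:
\[
\sum_j\int L_C+\sum_j L_D\le \J^*(x(0,0))<\infty .
\]

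Under \ref{item:AS-RelaxJump}, jumps are persistent, so $\sum_j\alpha_D(\Anorm{x(t_{j+1},j)})<\infty$, and hence $\Anorm{x}\to0$ at jump times. To pass from jump times to all hybrid times, I would use the monotone bound together with the lower bound $\alpha^*$. Since $\J^*(x(t,j))$ is bounded by a decreasing quantity, I would show its limit is zero, arguing that the remaining value after a jump is controlled by the upper bound near $\A$. The $\classKinfty$ property gives boundedness from arbitrary feasible initial conditions.

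Under \ref{item:AS-RelaxFlow}, I would use an analogous Barbalat-type argument on $\int\alpha_C(\Anorm{x})\,dt<\infty$ with persistent flows. Uniform continuity during flows would come from local boundedness of $f$ near $\A$ on the bounded set established in (S).

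Under \ref{item:AS}, each complete solution has either persistent flows or persistent jumps, so one of the two arguments applies.

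The delicate point is upgrading convergence along a subsequence to convergence along all of $\dom(x,u)$. The key there is that $\J^*(x(t,j))$ is sandwiched between $\alpha^*(\Anorm{x(t,j)})$ and a nonincreasing bound whose limit must be zero. To see this, once $x$ is close to $\A$ at some time, Proposition~\ref{proposition:continuityValueFunction} makes $\J^*$ small there, and monotonicity keeps it small afterward.
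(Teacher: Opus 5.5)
Your stability argument and the chaining of Lemma~\ref{lem:descent} are fine, but the attractivity part has a genuine gap exactly at the step you call delicate.

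Chaining Lemma~\ref{lem:descent} gives only two facts: $M_i:=\J^*(x(T_i,J_i))$ is nonincreasing in $i$, and $\J^*(x(t,j))\leq M_i$ for $(t,j)$ in the $i$-th piece. It does not show that $\J^*$ itself is monotone along the MPC solution. The reason is that the tail of an optimal pair from an intermediate time is in general not feasible for $\T$, since its terminal time is measured from the original start. So Lemma~\ref{lem:descent} cannot be restarted at an arbitrary $(t,j)$.

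Hence ``once $x$ is close to $\A$, $\J^*$ is small there and monotonicity keeps it small afterward'' does not follow. Smallness of $\J^*$ at, e.g., a pre-jump time $(t_{j+1},j)$ that is not a re-optimization time gives no bound on $M_{i+1}$. Your other nonincreasing bound, $\J^*(x(0,0))-\int L_C-\sum L_D$, tends to $\J^*(x(0,0))$ minus the total cost, which need not be zero.

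Separately, the Barbalat route for (AS3), and for (AS1) with persistent flows, uses a bound on $f$ that is not among the hypotheses: inputs are unconstrained and P5 is not assumed. Even with such a bound, zero-cost jumps in (AS3) can relocate the state between arbitrarily short flow intervals, so you get no uniform continuity on the union of flow intervals. (Minor: to construct the MPC pair in (E) you need a uniform $c>0$ with $T+J\geq c$ on $\T$, as the paper uses, not merely $T+J>0$.)

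The missing idea is to bound the tail of the \emph{current optimal piece} by $V$ rather than by $\J^*$.
\begin{itemize}
\item Let $(t,j)$ lie in the $i$-th piece with $x(t,j)\in X$. Compare $(x_i,u_i)$ with the competitor that follows it up to the corresponding time and then switches to $\kappa$.
\item That time precedes the terminal time of $(x_i,u_i)$, which lies in $\T$. The staircase geometry of $\T$ then forces the concatenation to meet $\T$ at or after the switch, where the state stays in $X$ by forward pre-invariance. So the competitor is feasible.
\item By the computation in Lemma~\ref{lem:valuecont}, the competitor's cost is at most the cost accrued up to $(t,j)$ plus $V(x(t,j))$. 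Optimality therefore bounds the tail cost of $(x_i,u_i)$ from $(t,j)$ by $V(x(t,j))$.
\item The construction in the proof of Lemma~\ref{lem:descent} then gives $M_{i+1}\leq V(x(t,j))\leq\overline\alpha(\Anorm{x(t,j)})$ by Assumption~\ref{assmp:OCP2}.
\end{itemize}
With this, your plan closes without Barbalat. Finite total cost, together with persistence of jumps in (AS2), of flows in (AS3), and of either one in (AS1), yields times with $\Anorm{x(t,j)}$ arbitrarily small. Hence $\inf_i M_i=0$, and so $\alpha^*(\Anorm{x(t,j)})\leq M_i\to 0$ on all of $\dom(x,u)$. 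This requires those near-$\A$ points to lie in $X$, the same proviso you already face in (E).

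The paper argues differently, by contradiction. If $c:=\lim\J^*(x(t,j))>0$, the state stays $\bar\epsilon$-away from $\A$, so the accrued cost grows at least like $\min\{\alpha_C(\bar\epsilon),\alpha_D(\bar\epsilon)\}(T_k+J_k)$. In the relaxed cases it grows per jump, resp.\ per unit of flow time. Either way this contradicts finiteness of $\J^*(x(0,0))$, and it treats (AS1) directly without splitting by persistence. Note, however, that its step $\J^*(x(t,j))\geq c$ for all $(t,j)$ rests on the same monotonicity, which Lemma~\ref{lem:descent} does not provide. The tail-cost bound above, with $c:=\lim_i M_i$, is what makes ``$\bar\epsilon$-away on the whole domain'' rigorous there as well.
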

\begin{IEEEproof}
The proof proceeds as follows.  First, a complete optimal solution with initial condition in a (small enough) neighborhood of the set $\A$ is constructed by induction.  Using properties of $\J$ established earlier, we show that $\J$ is a Lyapunov function certifying Lyapunov stability of $\A$.
Finally, attractivity of $\A$ is shown using the assumptions in the different
items in the statement.

Using Assumption~\ref{assmp:uniqueness}, 
Proposition~\ref{prop:uniqueness} implies that, for any given input, $\HS$ has unique state trajectories. 
From Proposition~\ref{prop:aroundA}, we have that $X\subset \X$.
Then, pick $\delta' > 0$  and note that 
existence of optimal solutions from $\X$
implies that, for every~$\xi\in\Pi(C\cup D) \cap X$ satisfying~$\Anorm{\xi}\leq\delta'$, there exists an optimal solution~$(x_0,u_0)$ starting from~$\xi$.
Now, 
the structure of the hybrid prediction horizon in Definition~\ref{def:Tstructure}
implies that there exists~$c>0$ such that~$\widehat{T}+\widehat{J}\geq c$ for any~$(\widehat{T},\widehat{J})\in\T$.  Hence, the terminal time~$(T_0,J_0)$ of~$(x_0,u_0)$ satisfies~$T_0+J_0\geq c$. In addition, due to the recursive feasibility property stated in Proposition~\ref{prop:T1T2} and existence of optimal solutions,
there exists an optimal solution pair~$(x_1,u_1)$ starting from~$x_0(T_0,J_0)$, whose terminal time~$(T_1,J_1)$ satisfies~$T_1+J_1\geq c$. Then, following Definition~\ref{def:solgenMPC},
the concatenation of $(x_0,u_0)$ and~$(x_1,u_1)$ form a solution pair. Specifically,~$(t,j)\mapsto (x'(t,j),u'(t,j))$ defined as follows is a solution pair to~$\HS$: for every $(t,j)\in\dom(x_0,u_0)$ such that~$t+j<T+J$,
\[
	(x'(t,j),u'(t,j))=(x_0(t,j),u_0(t,j))
\]
and for every $(t,j)\in\realsgeq\times\nats$ such that~$(t-T_0,j-J_0)\in\dom(x_1,u_1)$ and~$(t-T_0)+(j-J_0)<T_1+J_1$,
\[
	(x'(t,j),u'(t,j))=(x_1(t-T_0,j-J_0),u_1(t-T_0,j-J_0)).
\]
Setting~$(x'(T_0+T_1,J_0+J_1),u'(T_0+T_1,J_0+J_1))=(x_1(T_1,J_1),u_1(T_1,J_1))$, the terminal time~$(T',J')$ of~$(x',u')$ is such that~$T'+J'\geq 2c$. Similarly,~$(x',u')$ can be concatenated with an optimal pair $(x_2,u_2)$ from~$x_1(T_1,J_1)$ with terminal $(T_2,J_2)$ satisfying $T_2 + J_2 \geq c$. By induction, the resulting  solution pair starts from $\xi$ and constitutes a complete optimal solution generated by the hybrid MPC algorithm.

Next, we establish that the value function~$\J^*$ is a Lyapunov function. 
Since~$X\subset \X$ from Proposition~\ref{prop:aroundA}, by 
Assumption~\ref{assmp:OCP2}, there exists a class-$\mathcal{K}$ function, denoted~$\alpha_2$, such that~$\J^*(\tilde{x})\leq\alpha_2(\Anorm{\tilde{x}})$ for each~$\tilde{x}\in\X$ satisfying~$\Anorm{\tilde{x}}\leq\eta$, for some~$\eta>0$. 
By assumption, there exists a class-$\classK$ 
function $\alpha^*$ such that  
$\J^*(\tilde{x})\geq \alpha^*(\Anorm{\tilde{x}})$
for each $\tilde{x}\in \X$ such that 
an optimal solution pair~$(x,u)\in\widehat{\sol}_{\HS}(\tilde{x})$ exists. 
Hence, $\J^*$ is positive definite with respect to $\A$.
Moreover, by Lemma~\ref{lem:descent}, for each solution pair~$(x,u)$ generated by hybrid MPC, $\J^*(x(t,j))\leq\J^*(x(0,0))$ for each~$(t,j)\in\dom(x,u)$. 
Then, given $\varepsilon > 0$ there exists $\delta \in (0,\min\{\delta',\eta,{\alpha^*}^{-1}\circ\alpha_2(\varepsilon)\}]$
such that, for each~$\xi\in\Pi(C\cup D)$ satisfying~$\Anorm{\xi}\leq\delta$,
each (complete) optimal solution pair $(x,u)$ constructed as above
satisfies 
$$\alpha^*(|x(t,j)|_\A) \leq \J^*(x(t,j))\leq\J^*(\xi) \leq \alpha_2(|\xi|_\A)$$
for each~$(t,j)\in\dom(x,u)$, from where $|x(t,j)|_\A \leq \varepsilon$. 
Hence, $\A$ is stable in the sense of item~(S) in Definition~\ref{def:stab}.

To prove attractivity, using the complete solution pair~$(x,u)$ generated by hybrid MPC constructed at the beginning of this proof, we show that~$(t,j) \mapsto \J^*(x(t,j))$ approaches zero as $t + j$ approaches infinity.
First, let $\widetilde{\alpha}_C$ and $\widetilde{\alpha}_D$ be nonnegative 
and such that 
$L_C(x,u)\geq {\widetilde{\alpha}_C}(\Anorm{x})$ for each~$(x,u)\in C$ and
$L_D(x,u)\geq {\widetilde{\alpha}_D}(\Anorm{x})$ for each~$(x,u)\in D$, respectively.
From the inequality in Lemma~\ref{lem:descent}, we obtain,
for each~$(t,j)\in\dom(x,u)$,
\IfJournal{
\begin{multline}
	\J^*(x(t,j))\leq\J^*(x(0,0))
{-\sum_{i=0}^{j}\int_{s_i}^{s_{i+1}}\widetilde{\alpha}_C(\Anorm{x(s,i)}))\,ds} \\
									{-\sum_{i=0}^{j-1}}\widetilde{\alpha}_D(\Anorm{x(s_{i+1},i)}) \label{eqn:UpperBoundValueFunction}
\end{multline}
}
{
\begin{multline}
	\J^*(x(t,j))\leq\J^*(x(0,0)) {-\sum_{i=0}^{j}\int_{s_i}^{s_{i+1}}L_C(x(s,i),u(s,i))\,ds} \\
									{-\sum_{i=0}^{j-1}L_D(x(s_{i+1},i),u(s_{i+1},i))} \\
\leq\J^*(x(0,0))
{-\sum_{i=0}^{j}\int_{s_i}^{s_{i+1}}\widetilde{\alpha}_C(\Anorm{x(s,i)}))\,ds} \\
									{-\sum_{i=0}^{j-1}}\widetilde{\alpha}_D(\Anorm{x(s_{i+1},i)}) \label{eqn:UpperBoundValueFunction}
\end{multline}
}
where~$\{s_i\}_{i=0}^{j+1}$ is the sequence of generalized  jump times of the truncation of~$(x,u)$ up to time~$(t,j)$---see \eqref{eqn:GenJumpTimes}. 
Next, we show attractivity when items~\ref{item:AS}-\ref{item:AS-RelaxFlow} hold, one at a time.

First, suppose that item~\ref{item:AS} holds.
Then, \eqref{eqn:UpperBoundValueFunction} holds with $\widetilde{\alpha}_C = \alpha_C$ and $\widetilde{\alpha}_D = \alpha_D$ for the positive definite
functions $\alpha_C$ and $\alpha_D$ given in item~\ref{item:AS},
via 
\ref{item:LC}
and
\ref{item:LD}.
Let 
\begin{equation}\label{eqn:c-ASproof}
c:=\lim_{t+j\to\infty, (t,j) \in \dom (x,u)}\J^*(x(t,j))
\end{equation}
As~$\J^*(\xinitial)\geq\alpha^*(\Anorm{\xinitial})$ for all~$\xinitial\in \X$,
since $\alpha^*$ is a class-$\classK$
function,
$c=0$ implies~${\lim_{t+j\to\infty, (t,j)\in \dom(x,u)}\Anorm{x(t,j)}=0}$. 
Assuming the opposite, we have~$\J^*(x(t,j))\geq c>0$ for all~$(t,j)\in \dom(x,u)$. By continuity of~$\J^*$ 
obtained from Proposition~\ref{proposition:continuityValueFunction},
and the fact that~$\J^*(\A\cap\X)=0$ since $\J^*$ is positive definite relative to $\A$ by assumption, using $\alpha_2$, this means that there exists~$\ubar{\epsilon}>0$ so~$\Anorm{x(t,j)}\geq\ubar{\epsilon}$ for all~$(t,j)\in\dom(x,u)$. Let~$\{(T_k,J_k)\}_{k=0}^{\infty}$ be the sequence of optimization times of~$(x,u)$. By Lemma~\ref{lem:descent}, for each $k\in\nats$,
\begin{multline*}
	\J^*(x(T_{k+1},J_{k+1}))- \J^*(x(T_{k},J_{k}))\\
	\leq-\left((T_{k+1}-T_k)\alpha_C(\ubar{\epsilon})+(J_{k+1}-J_k)\alpha_D(\ubar{\epsilon})\right)
\end{multline*}
Summing over $k$'s in $\nats$ and taking limit on each side
\begin{multline*}
	c=\lim_{k\to\infty}\J^*(x(T_k,J_k))\\
	\leq \J^*(x(0,0))-\min\{\alpha_C(\ubar{\epsilon}),\alpha_D(\ubar{\epsilon})\}\lim_{k\to\infty}(T_k+J_k)
\end{multline*}
which contradicts~$c>0$ as~$\lim_{k\to\infty}(T_k+J_k)=\infty$ and~$\J^*(x(0,0))$ is finite. Therefore, it follows that~$c=0$, and consequently~$\lim_{t+j\to\infty, (t,j) \in \dom (x,u)}\Anorm{x(t,j)}=0$, which completes the proof.

Now, suppose item~\ref{item:AS-RelaxJump} is the one that holds.
It follows that \eqref{eqn:UpperBoundValueFunction} holds with $\widetilde{\alpha}_C = 0$ and $\widetilde{\alpha}_D = \alpha_D$ for the class-$\mathcal{K}_{\infty}$ function $\alpha_D$ given in item~\ref{item:AS-RelaxJump} via \ref{item:LD}.
With $c$ as in \eqref{eqn:c-ASproof}, we follow the steps of the proof for
the case when item~\ref{item:AS} holds, but in this case,
through recursive use of \eqref{eqn:UpperBoundValueFunction}, we obtain
\[
	\limsup_{t+j\to\infty}\J^*(x(t,j))\leq\lim_{j\to\infty}\J^*(x(0,0))
	-\sum_{j=0}^{\infty}\alpha_D(\Anorm{x(t_{j+1},j)})
\]
where~$\{t_j\}_{j=1}^{\infty}$ are the jump times of~$(x,u)$. Proceeding as above and using that $\alpha_D$ is a class-$\classKinfty$ function, assuming $c>0$ leads to a contradiction showing that~$c=0$. 

The case when item~\ref{item:AS-RelaxFlow} holds follows similarly.
In this case, the second term in \eqref{eqn:UpperBoundValueFunction} takes the form of a sum of integrals (for each period of flow) with the integrand~$\alpha_C(\Anorm{x(t,j)})$, where the union of the domains of integration is~$[0,\infty)$, and similar arguments apply, for which the fact that $\alpha_C$ is a class-$\classKinfty$ function is used.
\end{IEEEproof}

\NotForJournal{
\begin{remark}
The conditions in Theorem~\ref{thm:AS-MPC} involve
conditions on the data of the hybrid plant (Assumption~\ref{assmp:uniqueness}),
on the data defining Problem~\ref{prob:optimum}
(Assumptions~\ref{assmp:OCP},
\ref{assmp:OCP2}, and \ref{assmp:CLF}),
and on the existence of optimal pairs, which couples both data.
The required class-$\classK$ function $\alpha^*$ can be obtained from the results in 
Sections~\ref{sec:posdef} and~\ref{sec:posdef-Sufficient}.
Note that when the hybrid MPC algorithm generates solution pairs with persistent jumps, we do not insist on the flow cost to be positive definite with respect to the distance of the state~$x$ to the set~$\A$---similarly when the solution pairs have persistent flows.
Persistence of jumps or flows are usually inherited from the open-loop system~$\HS$. For example, for the bouncing ball in Examples~\ref{ex:bouncingball} and~\ref{ex:bouncingballflows}, every complete solution pair has persistent jumps (see Section~\ref{sec:intro}). Consequently, every solution pair generated by the hybrid MPC algorithm has persistent jumps, regardless of the OCP formulation.
\end{remark}
}

\section{Examples}
\label{sec:examples}

In this section, we implement the proposed hybrid MPC algorithm 
in the two examples introduced and revisited throughout the paper.
The files used in simulations are available at \url{github.com/HybridSystemsLab/HybridMPC}.

\begin{example}
\label{ex:BBfinal}
We illustrate the implementation of the hybrid MPC algorithm in Algorithm~\ref{alg:HybridMPC}
in the bouncing ball control system in Example~\ref{ex:bouncingball} with 
$\lambda \in (0,1]$ and 
constraints in $u$ defined by a compact set $U \subset \reals$.
Using the definition of the total energy $W$ in \eqref{eqn:BBenergy},
define the set to stabilize 
as the constant level of energy $c^* = \gamma h$ for some $h\geq 0$, namely
$$\A := \{x\in \Pi(C):W(x)=c^*\}$$
where $\Pi(C) = \{x  \in \reals^2 : x_1\geq 0\}$; see Examples~\ref{ex:bouncingball-revisited-2} and \ref{ex:bouncingballflows}. 
Next, we define the data $(\T,L_{C},L_{D},V,X)$ associated with hybrid MPC  (see Section~\ref{sec:overview}) so as to satisfy the assumptions in Theorem~\ref{thm:AS-MPC}.

The hybrid prediction horizon $\T$ is chosen as in \eqref{eq:genericT}, with $\delta > 0$ and $N \in \{1,2,\ldots\}$.  This set is such that 
there exists $c > 0$ such that $T+J \geq c$ for each $(T,J) \in \T$,
from where it follows that item a.i in Proposition~\ref{prop:mess} holds.

The terminal set $X$ is chosen as~$X :=\Pi(C)$.
Pick~$\theta\in(0,(2/\pi)(1-\lambda^4)/(1+\lambda^4))$.
We define the terminal cost as
\[
	V(x) := (1+\theta\arctan x_2)\widetilde{V}(x)
	\quad \forall x\in X
\]
where 
$\widetilde{V}$ is
defined in Example~\ref{ex:bouncingballflows}.
By the choice of~$\theta$,
$V$ is positive definite with respect to~$\A$
and the fact that each $(x,u) \in D$ is such that $x_1=0$.
Moreover, Assumption~\ref{assmp:OCP2} holds with $\alpha$ radially unbounded and any $\varepsilon > 0$.
 From Example~\ref{ex:bouncingballflows} and the properties of $\T$,
exploiting item~1 and item~a in Proposition~\ref{prop:mess},
we have that
\ref{item:bestbound}, \ref{item:flobound}, and \ref{item:jumbound} 
hold for each solution to $\HS$. 
\sout{We denote the  function satisfying \ref{item:bestbound} as $\tilde{\alpha}$.}

To define the hybrid closed-loop system~$\HS_{\kappa}$ in~\eqref{eq:Hk}, we proceed as follows.
Since the bouncing ball control system does not have inputs that affect the flow map,
we choose the state-feedback law~$\kappa_C$ as an arbitrary function with its range in~$U_C$.
The state-feedback law $\kappa_D$ is defined as
\[
	\kappa_D(x):=\max\left\{\lambda x_2+\sqrt{2\gamma h},0\right\} \quad \forall x\in \reals^2
\]
The resulting maps $f_{\kappa}$, $g_{\kappa}$ and the sets
$D_{\kappa}$, $C_{\kappa}$ defining the data of \eqref{eq:Hk} are given by
${C_{\kappa} := \Pi(C)}$, ${f_{\kappa}(x):=(x_2,-\gamma)}$ for each $x \in C_{\kappa}$, $D_{\kappa}:=\{x  \in \reals^2 : x_1= 0, x_2\leq 0\}$,
and
\[
	g_{\kappa}(x) :=
	\begin{cases}
	(0,-\lambda x_2) & \textrm{if }x_2\leq-\sqrt{2\gamma h}/\lambda\\
	(0,\sqrt{2\gamma h}) & \textrm{otherwise}
	\end{cases}
\]
for each $x \in D_{\kappa}$, respectively.

Next, we design the flow cost $L_C$ and jump cost $L_D$ so that the CLF condition in Assumption~\ref{assmp:CLF} holds with $V$ and $X$ as designed above. To that end, we define the flow cost as
\[
	L_C(x,u)	:=\theta\gamma\frac{(W(x)-\gamma h)^2}{1+2W(x)} \quad \forall (x,u)\in C
\]
The jump cost is chosen as follows:
\[
	L_D(x,u)=\frac{1}{2}\left(1-\frac{\theta\pi}{2}\right)\gamma h\left(x_2+\sqrt{2\gamma h}\right)^2
\]
if~$ x_2\geq -\sqrt{2\gamma h}/\lambda$, and
\begin{multline*}
	L_D(x,u)=\min\Big\{\frac{1}{2}\left(1-\frac{\theta\pi}{2}\right)\gamma h\left(x_2+\sqrt{2\gamma h}\right)^2,\\
	\left(1-\frac{\theta\pi}{2}\right)\left(\frac{x_2^2}{2}-\gamma h\right)^2-\left(1+\frac{\theta\pi}{2}\right)\left(\frac{\lambda^2x_2^2}{2}-\gamma h\right)^2\Big\}
\end{multline*}
otherwise, for all~$(x,u)\in D$.

Next, we check that the CLF conditions in \eqref{eq:CLF} hold. 
For starters, $L_C$ satisfies \ref{item:LC} due to radial unboundedness of~$W$ in~$\Pi(C)$, that is, $W(x)$ approaches infinity as~$\Anorm{x}$ approaches infinity in~$\Pi(C)$. Furthermore, since~$\langle\nabla W(x),f_{\kappa}(x)\rangle=0$, it can be shown by direct computation that~$\langle\nabla V(x),f_{\kappa}(x)\rangle\leq-L_C(x,\kappa_C(x))$ for all~$x\in C_{\kappa}$. 
Following similar steps, $L_D$ is 
nonnegative and satisfies \eqref{eq:CLF}.
\IfJournal{Details are available in \cite{HybridMPCTR}.}{More specifically, the condition on~$\theta$ ensures that the coefficient of the~$x_2^4$ term is positive and~$L_D$ is positive definite, in the sense that, for each $(x,u)\in D$,~$L_D(x,u)$ is equal to zero if~$x_2=-\sqrt{2\gamma h}$ and positive otherwise.  It can be shown that~$V(g_{\kappa}(x))-V(x)\leq-L_D(x,\kappa_D(x))$ for each~$x\in D_{\kappa}$. In particular, since~$g_{\kappa}(x)=(0,\sqrt{2\gamma h})\in\A$ for each~$x\in D_{\kappa}$ satisfying~$x_2\geq-\sqrt{2\gamma h}/\lambda$, and~$V(x)=0$ for each~$x\in\A$,
\begin{multline*}
	V(g_{\kappa}(x))-V(x)\\
	\leq-\frac{1}{4}\left(1-\frac{\theta\pi}{2}\right)\left(x_2-\sqrt{2\gamma h}\right)^2\left(x_2+\sqrt{2\gamma h}\right)^2\\
	\leq -\frac{1}{2}\left(1-\frac{\theta\pi}{2}\right)\gamma h\left(x_2+\sqrt{2\gamma h}\right)^2\leq-L_D(x,\kappa_D(x))
\end{multline*}
for each~$x\in D_{\kappa}$ satisfying~$x_2\geq-\sqrt{2\gamma h}/\lambda$, where we use the fact that~$x_1=0$ and~$x_2\leq 0$ on~$D_{\kappa}$. On the other hand, given any~$x\in D_{\kappa}$ satisfying~$x_2\leq-\sqrt{2\gamma h}/\lambda$, since~$\kappa_D(x)=0$, we have
\begin{multline*}
	V(g_{\kappa}(x))-V(x)\\
	=(1+\theta\arctan (-\lambda x_2))\left(\frac{\lambda^2x_2^2}{2}-\gamma h\right)^2\\
	-(1+\theta\arctan x_2)\left(\frac{x_2^2}{2}-\gamma h\right)^2\\
	\leq(1+\theta\pi /2)\left(\frac{\lambda^2x_2^2}{2}-\gamma h\right)^2-(1-\theta\pi /2)\left(\frac{x_2^2}{2}-\gamma h\right)^2\\
	\leq -L_D(x,\kappa_D(x))
\end{multline*}
Hence,~\eqref{eq:CLF} holds. 
}

Note that since $L_{C}$ 
satisfies Assumption~\ref{assmp:OCP},
and 
\ref{item:flobound}
holds for each solution to $\HS$,
from Theorem~\ref{thm:PDofJ} and \cite[Section 6.2]{286} we have that 
there exists a class-$\classK$ 
function
$\alpha^*:\realsgeq\to\realsgeq$ such that 
the value function $\J^*$ in \eqref{eq:value} satisfies \eqref{eqn:ValueFunctionLowerBound}
for each $\xinitial\in \X$ and that 
an optimal solution pair~$(x,u)\in\widehat{\sol}_{\HS}(\xinitial)$ exists from each such $\xinitial$. 
Furthermore, one can adapt the steps  in~\cite[Example~2.12]{65} to show that~$X$ is 
forward pre-invariant for~$\HS_{\kappa}$,
leading to $\X = X$, which is equal to $\Pi(C) \cup \Pi(D)$, 
implying that every maximal solution to the closed-loop system
using hybrid MPC is complete.

Next, we simulate the bouncing ball controlled by hybrid MPC.
The parameters used in the simulation are~$\gamma=9.81 \mbox{m/s}$,~$\lambda=0.9$,~$h=3\  \mbox{m}$,
which leads to $c^* = 1.835\ \mbox{m}^2\mbox{/s}$, and a prediction horizon of the form~\eqref{eq:genericT} with~$N=5$ and $\delta=0.5$.
Problem~\ref{prob:optimum} is solved in MATLAB using the \texttt{fmincon} function by converting it into a finite-dimensional nonlinear program, for which we exploit the fact that the total energy~$W$---and therefore the flow cost~$L_C$---is invariant during flows (namely, $W$ is conserved during flows) and that the state trajectory of the bouncing ball during flows can be written in closed form.
The closed-loop system is simulated using the Hybrid Equations Toolbox~\cite{74}. After every optimization event, if the predicted state trajectory jumps, the next optimization is triggered at the next jump time; otherwise, the next optimization occurs at the terminal time of the predicted state trajectory. 
\IfJournal{
Figure~\ref{fig:simu2} shows the total energy associated with trajectories from
initial conditions 1) $x(0,0) = (0,0)$, shown in blue,  2) $x(0,0) = (1,-1)$, shown in black, 3) $x(0,0) = (3,4)$, shown in green.
The trajectory from  $x(0,0) = (0,0)$ exhibits a jump at $(t,j) = (0,0)$ since a jump is possible from such initial condition. The hybrid MPC controller injects energy equal to $c^*$ at that jump, and the trajectory converges and stays at the desired level of energy from there on. 
The trajectory from $x(0,0) = (1,-1)$ exhibits a similar behavior, but after the first jump at $(t_1,0)$ where $t_1 \approx 0.36 \mbox{sec}$. 
Such ``dead beat'' convergence property is due to the only constraint on the control input being $u \geq 0$.
The effect of this constraint is visualized in the trajectory from 
$x(0,0) = (3,4)$, which has initial energy much larger than $c^*$
and, due to the control input having to be nonnegative, 
cannot remove energy in one jump.}
{Figure~\ref{fig:simu} depicts closed-loop state trajectories for the controlled bouncing ball
from three initial conditions: 
1) $x(0,0) = (0,0)$, shown in blue, 
2) $x(0,0) = (1,-1)$, shown in black, 
3) $x(0,0) = (3,4)$, shown in green.
Figure~\ref{fig:simu2} shows the total energy associated with these trajectories.
The trajectory from  $x(0,0) = (0,0)$ exhibits a jump at $(t,j) = (0,0)$ since a jump is possible from such initial condition. The hybrid MPC controller injects energy equal to $c^*$ at that jump, and the trajectory converges and stays at the desired level of energy from there on. 
The trajectory from $x(0,0) = (1,-1)$ exhibits a similar behavior, but after the first jump at $(t_1,0)$ where $t_1 \approx 0.36 \mbox{sec}$. 
Such ``dead beat'' convergence property is due to the only constraint on the control input being $u \geq 0$.
The effect of this constraint is visualized in the trajectory from 
$x(0,0) = (3,4)$, which has initial energy much larger than $c^*$
and, due to the control input having to be nonnegative, 
cannot remove energy in one jump.}

\NotForJournal{
\begin{figure}[tbhp]
\centering
\includegraphics[width=1\columnwidth,trim=1.6cm 0.3cm 1.6cm 0.7cm,clip]{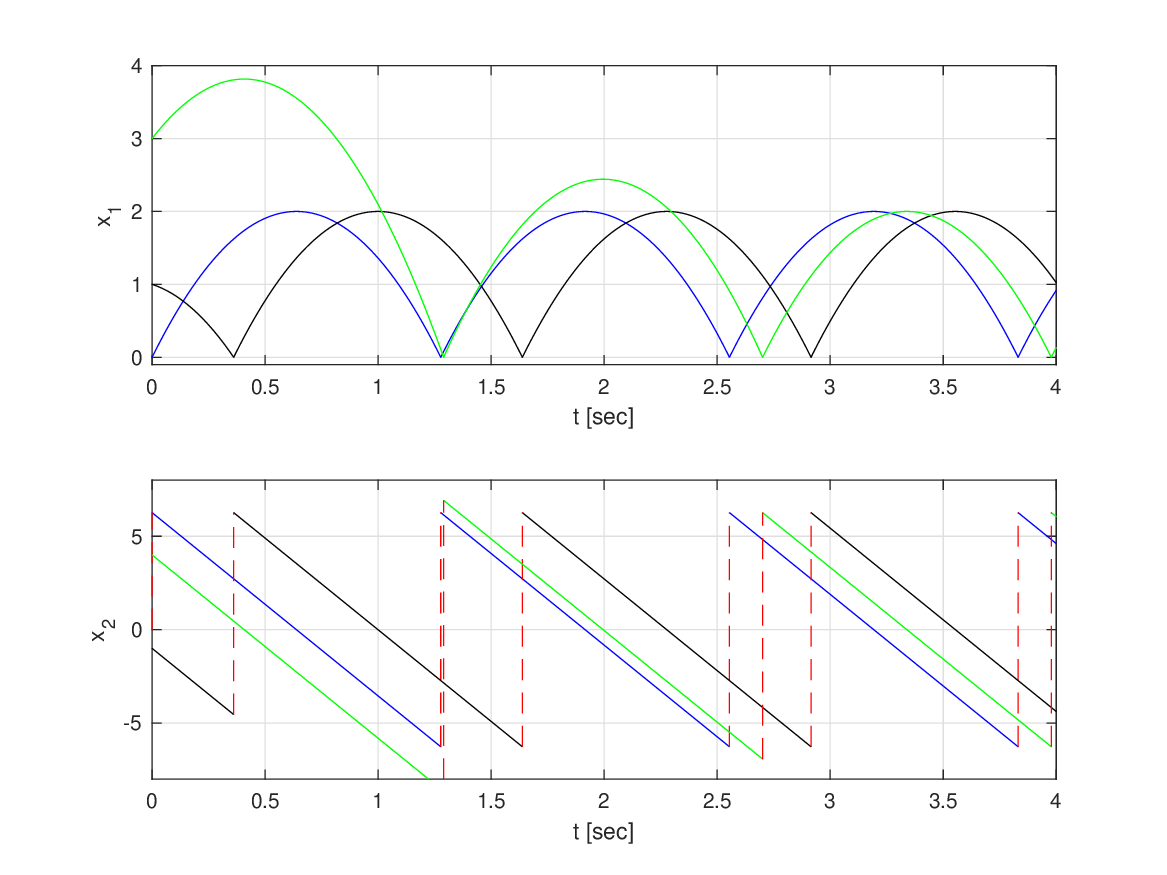}
\caption{Position and velocity trajectories of the bouncing ball controlled by hybrid MPC projected onto ordinary time~$t$.}
\label{fig:simu}
\end{figure}
}

\begin{figure}[tbhp]
\centering
\vspace{-0.15in}
\includegraphics[width=1\columnwidth,trim=1.3cm 0cm 1.6cm 0.3cm,clip]{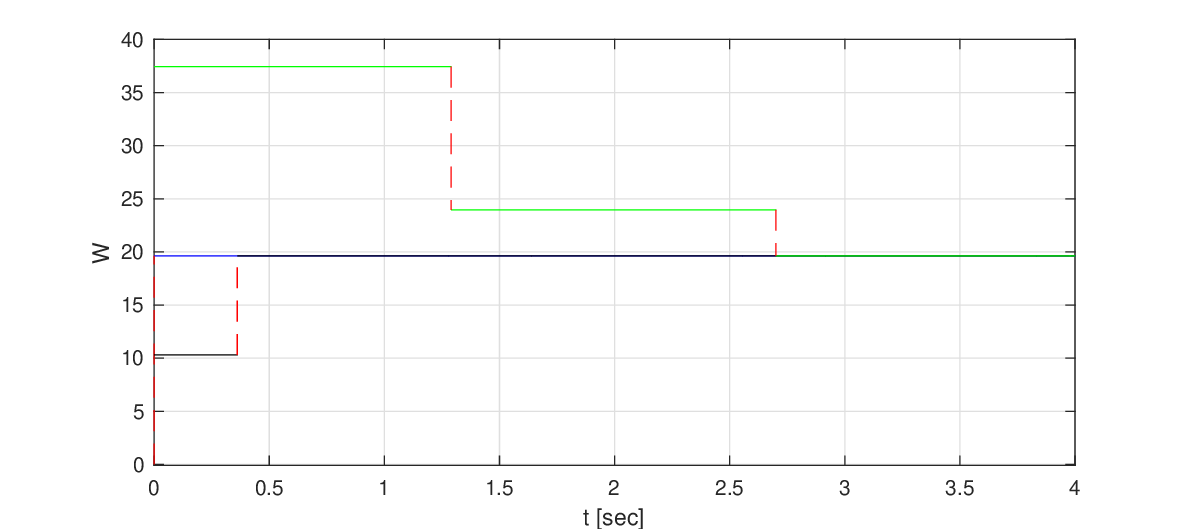}
\vspace{-0.2in}
\caption{Evolution of the total energy of the bouncing ball controlled by hybrid MPC 
projected onto ordinary time~$t$.}
\label{fig:simu2}
\vspace{-0.15in}
\end{figure}
\end{example}

\begin{example}[Sample-and-Hold Control (revisited)]
\label{ex:samplefinal}
We synthesize our hybrid MPC algorithm to control the system in  Example~\ref{ex:sample} using the sample-and-hold paradigm.  With the flow and jump sets given in Example~\ref{ex:sample}, and the compact set $\A$ and linear plant dynamics defined by $\tilde{f}$ in Example~\ref{ex:samplejump},
since $\A$ is such that both $z$ and $\eta$ are zero, the control objective for hybrid MPC 
is to drive the state $z$ and the input $\eta$ of $\dot z = A z + B \eta$ to zero asymptotically, and render the set $\A$ Lyapunov stable.  To this end, following the structure of the stage cost used in the linear quadratic regulator, 
the stage cost for flows is chosen as
\IfJournal{$L_C(x,u) := x_1^\top Q_C x_1$ for all $(x,u) \in C$,}{
$$L_C(x,u) := x_1^\top Q_C x_1 \qquad \forall (x,u) \in C$$}
where $x_1 := (z,\eta)$, $x_2 := \tau_s$, and $Q_C$ is a symmetric positive definite matrix that is to be chosen (see below).
Assuming that jumps do not accrue cost, the stage cost for jumps is chosen to be the zero function.
The terminal cost $V$ is chosen as
$V(x) = \exp(-\sigma x_2) \left(x_1^\top \exp(A_f^\top(T_s-x_2)) P  \exp(A_f(T_s-x_2))  x_1 \right)$,
where $\sigma > 0$, $A_f=\matt{A & B \\0 & 0}$, and $P$ is a symmetric positive definite matrix, chosen next.

The matrices $Q_C$ and $P$ are chosen as follows.
Given matrices $A \in \reals^{n_z} \times \reals^{n_z}$ and $B \in \reals^{n_z} \times \reals^{m_z}$, and the compact set $U \subset\reals^{m_z}$, choose a matrix $K \in \reals^{m_z} \times \reals^{n_z}$, a positive definite symmetric matrix $P \in \reals^{n_z} \times \reals^{n_z}$,  $c > 0$, and $T_s > 0$ such that
$$
H(K)^\top P H(K) - P < 0
$$
$X := \defset{x}{x_2 \in [0,T_s], V(x) \leq c}$ is such that
\IfJournal{$(x_1,x_2) = (z,\eta,\tau_s) \in X$ implies $Kz \in U$}
{
$$
(x_1,x_2) = (z,\eta,\tau_s) \in X \ \ \Rightarrow \  \ Kz \in U
$$
}
and, for each $s \in [0,T_s]$,
\begin{equation}\label{ex:sampleLcCondition}
Q_C \leq \sigma\exp(-\sigma s) ^\top \exp(A_f^\top(T_s-s)) P  \exp(A_f(T_s-s)) 
\end{equation}
where $H(K) := \exp(A_f T_s) A_g(K)$ and $A_g(K) := \matt{I & 0 \\ K & 0}$.
The choice of $V$ stems from the fact that the hybrid closed-loop system resulting from using the state-feedback controller $u = \kappa_D(x):=K z$ is such that
$\langle\nabla V(x),f_{\kappa}(x)\rangle \leq - \sigma V(x)$ on $C_{\kappa}$
and that
$V(g_{\kappa}(x))-V(x) = x_1^\top (H(K)^\top P H(K) - \exp(-\sigma x_2)P) x_1$.
Then, Assumption~\ref{assmp:CLF} holds if $L_c(x,u) \leq \sigma V(x)$ on $C$, which, since $f$ and $L_C$ do not depend on $u$, is 
guaranteed from \eqref{ex:sampleLcCondition}.
Recall that Example~\ref{ex:sampleflow}  establishes via Proposition~\ref{prop:mess} that \ref{item:flobound} holds.
Then,
from item~\ref{item2-pd} in
Theorem~\ref{thm:PDofJ}, \eqref{eqn:ValueFunctionLowerBound} holds.\NotForJournal{\footnote{Note that Example~\ref{ex:samplejump}  establishes that 
\ref{item:jumbound} holds with a class-$\classKinfty$ function $\alpha$, but since
$L_D$ is the zero function, 
Theorem~\ref{thm:PDofJ} cannot be employed to establish that \eqref{eqn:ValueFunctionLowerBound} holds.}}
Note that from the construction of the hybrid plant in Example~\ref{ex:sample}, every complete solution has persistent flows and \ref{item:AS-RelaxFlow} holds. 
Since the other assumptions in Theorem~\ref{thm:AS-MPC} hold, 
the hybrid MPC algorithm renders $\A$ asymptotically stable for $\HS$ in Example~\ref{ex:sample}.
\end{example}

\NotForJournal{
\begin{example}[Thermostat Control]
The evolution of the temperature of a room controlled by a heater that can either be {\em on} or {\em off} is given by
$$
\dot z = - z + z_o + z_\Delta q,
$$
where $z \in \reals$ is the temperature of the room, 
$z_o$ denotes the effective temperature outside of the room,
$z_\Delta$ represents the capacity of the heater,
and the logic state $q \in \{0,1\}$ represents whether the heater is {\em on} or {\em off}.  
When $q = 1$, the heater is 
{\em on} and when $q=0$, the heater is {\em off}.
Our goal is to design a control algorithm that 
steers the temperature to a desired temperature range $$[z_{\min},z_{\max}]$$ 
where $z_{\min} < z_{\max}$,
where $z_o <  z_{\min} < z_{\max} < z_o + z_\Delta$.
To facilitate the formulation of the optimization problem, we treat $q$ as 
an additional logic state and incorporate an input, denoted  $u \in\{0,1\}$, 
playing the role of the decision variable 
for the optimization problem.
The resulting system is given as in \eqref{eqn:Hp}, 
with state $x = (q,z) \in  \{0,1\} \times \reals$,
input $u \in \{0,1\}$, 
and data $(C,f,D,g)$ given by
\[
	\begin{aligned}
		C					&:=\{0,1\}\times \reals \times \{0\},\\
		f(x,u)	&:=	(0,-z + z_o + z_{\Delta} q)	\quad \forall (x,u)\in \{0,1\} \times \reals \times \{0,1\},\\
		D					&:=\{0,1\}\times \reals \times \{1\},\\
		g(x,u) 	&=	(1-q,z)											\quad \forall (x,u)\in\{0,1\} \times \reals  \times \{0,1\}.
	\end{aligned}
\]
With this data, flows of the plant are allowed when $u$ is zero.
In this regime, the temperature $z$ evolves according to its continuous-time
model and $q$ remains constant as $f$ leads to $\dot q = 0$.
At jumps, which are triggered when $u$ is equal to one, 
the update law $1 - q$ toggles the value of $q$ from $0$ to $1$ or from $1$ to $0$.
According to the goal stated above, the set to stabilize is given by the range of desired temperature, namely, 
$$\A := \defset{x}{z \in [z_{\min},z_{\max}], q \in \{0,1\}} 
$$

Next, we define the data $(\T,L_{C},L_{D},V,X)$ associated with hybrid MPC  (see Section~\ref{sec:overview}) so as to satisfy the assumptions in Theorem~\ref{thm:AS-MPC}.
The hybrid prediction horizon $\T$ is chosen as in \eqref{eq:genericT}, with $\delta > 0$ and $N \in \{1,2,\ldots\}$.  This set is such that there exists $c > 0$ such that $T+J \geq c$ for each $(T,J) \in \T$, from where it follows that item a.i in Proposition~\ref{prop:mess} holds. 
The flow cost $L_C$ is defined to penalize the cost of flow when the temperature is not in the desired
range, as follows: for each $(x,u) \in C$,
$$L_C(x,u) := \left\{
\begin{array}{cl}
a_{\scriptsize \rm hot}(z - z_{\max}) + a_{\scriptsize \rm on} q & \mbox{ if } z \geq z_{\max} \\
0 & \mbox{ if } z \in (z_{\min},z_{\max}) \\
a_{\scriptsize \rm cold}(z_{\min} - z)  & \mbox{ if } z \leq z_{\min}
\end{array}
\right.$$
where $0 < a_{\scriptsize \rm hot} \leq z_{\max}-z_o$ penalizes the temperature when larger than the desired maximum,
$0<  a_{\scriptsize \rm cold}$ penalizes the temperature when smaller than the desired minimum,
and $0 \leq a_{\scriptsize \rm on}$ penalizes the on position of the heater when the temperature is larger than the desired maximum. The jump cost $L_D$ also penalizes the temperature when outside the desired range and,
in addition, aims at reducing the number of switches of the heater.
It is chosen as follows: for each $(x,u) \in D$,
$$L_D(x,u) := \left\{
\begin{array}{ll}\displaystyle
b_{\scriptsize \rm hot}\frac{(z - z_{\max})^2}{2} + b_{\scriptsize \rm on,hot} (1-q) & \\ \hspace{1.4in} \mbox{ if } z \geq z_{\max} & \\ \displaystyle
b_{\scriptsize \rm ss}\frac{(z_{\max} - z)(z - z_{\min})}{2} + b_{\scriptsize \rm on, ss} (1-q) & \\ \hspace{1.4in} \mbox{ if } z \in (z_{\min},z_{\max}) \\ \displaystyle
b_{\scriptsize \rm cold}\frac{(z - z_{\min})^2}{2} & \\ \hspace{1.4in} \mbox{ if } z \leq z_{\min} & 
\end{array}
\right.$$
where $0 < b_{\scriptsize \rm hot} \leq 1$ penalizes the temperature when larger than the desired maximum,
$b_{\scriptsize \rm on,hot}$ penalizes turning heater on,
$b_{\scriptsize \rm ss}$ penalizes temperature variations within the desired range,
$b_{\scriptsize \rm on,ss}$ penalizes turning heater on if  within the desired range,
and $b_{\scriptsize \rm cold}$ penalizes the temperature when smaller than the desired minimum.

The terminal constraint set $X$ is defined as 
$$
X := \widetilde{C} \cup \widetilde{D}
$$
where the sets $\widetilde{C}$ and $\widetilde{D}$ are given by
$$\widetilde{C} : = \left(\{0\}\times \widetilde{C}_0\right) \cup \left(\{1\}\times \widetilde{C}_1\right)$$
$$\widetilde{D} : = \left(\{0\}\times \widetilde{D}_0\right) \cup \left(\{1\}\times \widetilde{D}_1\right)$$
with
$$\widetilde{C}_0 := \defset{z \in \reals}{z \geq z_{\min}}, \ \widetilde{C}_1 := \defset{z \in \reals}{z \leq z_{\max}}$$
$$\widetilde{D}_0 := \defset{z \in \reals}{z \leq z_{\min}},\  
\widetilde{D}_1 := \defset{z \in \reals}{z \geq z_{\max}}$$
This construction of $X$ constrains the temperature $z$ and 
the heater state $q$ to assure stabilizability of the set $\A$.
The terminal cost function $V$ is defined as 
$$V(x) := \left\{
\begin{array}{cl}\displaystyle
\frac{(z - z_{\max})^2}{2}(1+q) & \mbox{ if } z \geq z_{\max} \\
0 & \mbox{ if } z \in (z_{\min},z_{\max}) \\
\displaystyle
\frac{(z - z_{\min})^2}{2}(2-q) & \mbox{ if } z \leq z_{\min}
\end{array}
\right.$$
where the terms
$(1 + q)$ and $(2-q)$ penalize being at the ``wrong'' mode.
Picking the feedback $\kappa_C$ as 
$\kappa_C(x) = 0$ if $x \in \widetilde{C}$, and one otherwise,
and the feedback $\kappa_D$ as
$\kappa_D(x) = 1$ if $x \in \widetilde{D}$, and zero otherwise,
we obtain the following properties for the variation of $V$ along flows
and jumps of the hybrid closed-loop system $\HS_{\kappa}$ in \eqref{eq:Hk}: for every $x \in X \cap \widetilde{C} (= \widetilde{C})$ we have
that $\langle \nabla V(x), f(x,\kappa_C(x))\rangle$ is given by
$$
\left\{
\begin{array}{cl}
-(z_{\max}- z_o)(z - z_{\max})  & \mbox{ if } z \geq z_{\max} \\
0  & \mbox{ if } z \in (z_{\min},z_{\max}) \\
-(z_o + z_\Delta - z_{\min})(z_{\min} - z) & \mbox{ if } z \leq z_{\min}
\end{array}
\right.
$$
and, for every $x\in X\cap \widetilde{D} (=\widetilde{D})$, 
$V(g(x,\kappa_D(x))) - V (x)$ is equal to
$$
 \left\{
\begin{array}{cl}\displaystyle
-\frac{(z-z_{\max})^2}{2}  & \mbox{ if } z \geq z_{\max} \\
0  & \mbox{ if } z \in (z_{\min},z_{\max}) \\ \displaystyle
-\frac{(z-z_{\min})^2}{2}  & \mbox{ if } z \leq z_{\min}
\end{array}
\right.
$$
It follows that, with the choices of $L_C$ and $L_D$ above,
Assumption~\ref{assmp:CLF} holds and that~$X$ is 
forward pre-invariant for~$\HS_{\kappa}$.
From item~\ref{item:flowjumpbound} in Theorem~\ref{thm:PDofJ},
there exists a class-$\classK$ function
$\alpha^*:\realsgeq\to\realsgeq$ such that 
the value function $\J^*$ in \eqref{eq:value} satisfies \eqref{eqn:ValueFunctionLowerBound}
for each $\xinitial\in \X$.
Finally, from \cite[Section 6.1]{286}, we have that 
an optimal solution pair~$(x,u)\in\widehat{\sol}_{\HS}(\xinitial)$ exists from each such $\xinitial$.
\end{example}
}

\section{Conclusion}
\label{sec:conclusion}

This paper formalizes an MPC framework for hybrid dynamical systems.
The formulation allows to quantify the cost and predict trajectories over hybrid time domains.
Conditions guaranteeing recursive feasibility, positive definiteness of the value function and its decrease along solutions,
leading to asymptotic stability of a closed set are proposed.
The conditions involve the data of the system and the OCP, leading to checkable design conditions.
Future work will focus on the hybrid MPC design to satisfy other dynamical properties of interest, such as safety, temporal logic, and robustness.  An add-on to our Hybrid Equations Toolbox (HyEQ) for Matlab/Simulink \cite{74} to compute numerical solutions of Problem~\ref{prob:optimum} is currently being developed.
Employing the results in \cite{camilli2008control} to relax the conditions for positive definiteness of the value function as suggested by an anonymous reviewer is also part of future work.

\ifCLASSOPTIONcaptionsoff
  \newpage
\fi

\bibliographystyle{IEEEtran}
\bibliography{bibs/long,bibs/IEEEabrv,bibs/HybridMPC,bibs/RGSweb}

@article{camilli2008control,
  title={Control Lyapunov functions and Zubov's method},
  author={Camilli, Fabio and Gr{\"u}ne, Lars and Wirth, Fabian},
  journal={SIAM Journal on Control and Optimization},
  volume={47},
  number={1},
  pages={301--326},
  year={2008},
  publisher={SIAM}
}

@article{Bemporad2002,
  author  = {Alberto Bemporad and Manfred Morari and Vivek Dua and Efstratios N. Pistikopoulos},
  title   = {The explicit linear quadratic regulator for constrained systems},
  journal = {Automatica},
  volume  = {38},
  number  = {1},
  pages   = {3--20},
  year    = {2002}
}

@article{Bemporad2006,
  author  = {Alberto Bemporad and Carlo Filippi},
  title   = {Suboptimal explicit receding horizon control with guaranteed stability},
  journal = {Automatica},
  volume  = {42},
  number  = {3},
  pages   = {457--462},
  year    = {2006}
}

@article{BemporadMorari1999,
  title={Control of systems integrating logic, dynamics, and constraints},
  author={Bemporad, Alberto and Morari, Manfred},
  journal={Automatica},
  volume={35},
  number={3},
  pages={407--427},
  year={1999}
}

@article{colaneri2007robust,
  title={Robust model predictive control of discrete-time switched systems},
  author={Colaneri, Patrizio and Scattolini, Riccardo},
  journal={IFAC Proceedings Volumes},
  volume={40},
  number={14},
  pages={208--212},
  year={2007},
  publisher={Elsevier}
}

@article{muller2012model,
  title={Model predictive control of switched nonlinear systems under average dwell-time},
  author={M{\"u}ller, Matthias A and Martius, Pascal and Allg{\"o}wer, Frank},
  journal={Journal of Process Control},
  volume={22},
  number={9},
  pages={1702--1710},
  year={2012},
  publisher={Elsevier}
}

@article{mhaskar2008robust,
  title={Robust predictive control of switched systems: Satisfying uncertain schedules subject to state and control constraints},
  author={Mhaskar, Prashant and El-Farra, Nael H and Christofides, Panagiotis D},
  journal={International Journal of Adaptive Control and Signal Processing},
  volume={22},
  number={2},
  pages={161--179},
  year={2008},
  publisher={Wiley Online Library}
}

@Article{Tabuada.07,
  Title                    = {Event-triggered real-time scheduling of stabilizing control tasks},
  Author                   = {P. Tabuada},
  Journal                  = {IEEE Transactions on Automatic Control},
  Year                     = {2007},
  Number                   = {9},
  Pages                    = {1680--1685},
  Volume                   = {52},

  Publisher                = {IEEE}
}

@Article{AubinLygerosQuincampoixSastrySeube02,
  Title                    = {Impulse Differential Inclusions: a Viability Approach to Hybrid Systems},
  Author                   = {J.-P. Aubin and J. Lygeros and M. Quincampoix and S. S. Sastry and N. Seube},
  Journal                  = IEEETACz,
  Year                     = {2002},
  Number                   = {1},
  Pages                    = {2-20},
  Volume                   = {47}
}

@Book{vanderSchaftSchumacher00,
  Title                    = {An Introduction to Hybrid Dynamical Systems},
  Author                   = {A. van der Schaft and H. Schumacher},
  Publisher                = {LNCIS, Springer},
  Year                     = {2000}
}

@phdthesis{findeisen2006nonlinear,
  title={Nonlinear model predictive control: a sampled-data feedback perspective},
  author={Findeisen, Rolf},
  year={2006},
  school={Stuttgart Univ.}
}

@book{GrunePannek2017,
  title={Nonlinear model predictive control},
  author={Gr{\"u}ne, Lars and Pannek, J{\"u}rgen},
  year={2017},
  edition={2nd},
  publisher={Springer}
}

@article{Mayne.ea.00.Automatica,
  title={Constrained model predictive control: Stability and optimality},
  author={Mayne, David Q and Rawlings, James B and Rao, Christopher V and Scokaert, Pierre OM},
  journal={Automatica},
  volume={36},
  number={6},
  pages={789--814},
  year={2000},
  publisher={Elsevier}
}

@incollection{nodozi2025solving,
  title={Solving Hybrid Model Predictive Control Problems via a Mixed-Integer Approach},
  author={Nodozi, Iman and Sanfelice, Ricardo G},
  booktitle={Model Predictive Control},
  pages={83--109},
  year={2025},
  publisher={Springer}
}

@article{camacho,
title = "Model predictive control techniques for hybrid systems",
journal = "Annual Reviews in Control",
volume = "34",
number = "1",
pages = "21--31",
year = "2010",
issn = "1367-5788",
author = "Camacho, E.F. and Ramirez, D.R. and Limon, D. and Mu\~{n}oz de la Pe\~{n}a, D. and Alamo, T.",
}

@Inbook{pereira,
author="Pereira, Fernando Lobo and Fontes, Fernando A. C. C. and Aguiar, Ant\'{o}nio Pedro and de Sousa, {Jo\~{a}o} Borges",
editor="Olaru, Sorin and Grancharova, Alexandra and Lobo Pereira, Fernando",
title="An Optimization-Based Framework for Impulsive Control Systems",
bookTitle="Developments in Model-Based Optimization and Control: Distributed Control and Industrial Applications",
year="2015",
publisher="Springer International Publishing",
address="Cham",
pages="277--300",
isbn="978-3-319-26687-9",
}

@ARTICLE{pakniyat, 
author={A. Pakniyat and P. E. Caines}, 
journal={IEEE Transactions on Automatic Control}, 
title={On the Relation Between the Minimum Principle and Dynamic Programming for Classical and Hybrid Control Systems}, 
year={2017}, 
volume={62}, 
number={9}, 
pages={4347--4362}, 
ISSN={0018-9286}, 
month={Sept},}

@article{maynesurvey,
title = "Model predictive control: Recent developments and future promise ",
journal = "Automatica ",
volume = "50",
number = "12",
pages = "2967--2986",
year = "2014",
note = "",
issn = "0005-1098",
author = "David Q. Mayne"
}

@ARTICLE{sopasakis, 
author={P. Sopasakis and P. Patrinos and H. Sarimveis and A. Bemporad}, 
journal={IEEE Transactions on Automatic Control}, 
title={Model Predictive Control for Linear Impulsive Systems}, 
year={2015}, 
volume={60}, 
number={8}, 
pages={2277--2282}, 
doi={10.1109/TAC.2014.2380672}, 
ISSN={0018-9286}, 
month={Aug},}

@ARTICLE{magni, 
author={L. Magni and R. Scattolini}, 
journal={IEEE Transactions on Automatic Control}, 
title={Model predictive control of continuous-time nonlinear systems with piecewise constant control}, 
year={2004}, 
volume={49}, 
number={6}, 
pages={900--906}, 
doi={10.1109/TAC.2004.829595}, 
ISSN={0018-9286}, 
month={June},}

@book{aubin,
 author = {Jean-Pierre Aubin and H{\'é}l{\`}ene Frankowska},
 title = {Set-Valued Analysis},
 year = {2009},
 publisher = {Birkh{\"a}user},
 address = {Basel},
}

@InProceedings{Cutler.Ramaker.80.ACC,
  author    = {Cutler, Charles R and Ramaker, Brian L},
  title     = {Dynamic matrix control -- a computer control algorithm},
  booktitle = {Joint Aut. Cont. Conf.},
  year      = {1980},
  number    = {17},
  pages     = {72},
}

@Article{Morari.Lee.99.CCE,
  author    = {Morari, Manfred and Lee, Jay H},
  title     = {Model predictive control: past, present and future},
  journal   = {Comp. \& Chem. Eng.},
  year      = {1999},
  volume    = {23},
  number    = {4-5},
  pages     = {667--682},
  publisher = {Elsevier},
}

@Article{Qin.Badgwell.03.CEP,
  author    = {Qin, S Joe and Badgwell, Thomas A},
  title     = {A survey of industrial model predictive control technology},
  journal   = {Control Engineering Practice},
  year      = {2003},
  volume    = {11},
  number    = {7},
  pages     = {733--764},
  publisher = {Elsevier},
}

@Book{Rawlings.Mayne.09,
  title     = {Model predictive control: Theory and design},
  publisher = {Nob Hill Pub.},
  year      = {2009},
  author    = {Rawlings, J. B. and Mayne, D. Q.},
}

@Book{Borrelli.ea.17,
  title     = {Predictive control for linear and hybrid systems},
  publisher = {Cambridge University Press},
  year      = {2017},
  author    = {Borrelli, F. and Bemporad, A. and Morari, M.},
}

@Article{Mayne.ea.90.TAC,
  author    = {Mayne, D. Q. and Michalska, H.},
  title     = {Receding horizon control of nonlinear systems},
  journal   = {IEEE Trans. Aut. Cont.},
  year      = {1990},
  volume    = {35},
  number    = {7},
  pages     = {814--824},
  publisher = {IEEE},
}

@Article{Chen.Allgower.98.Automatica,
  author       = {Chen, H. and Allg{\"o}wer, F.},
  title        = {A quasi-infinite horizon nonlinear model predictive control scheme with guaranteed stability},
  journal      = {Automatica},
  year         = {1998},
  volume       = {34},
  number       = {10},
  pages        = {1205--1217},
  booktitle    = {Control Conference (ECC), 1997 European},
  organization = {IEEE},
}

@Article{Richalet.ea.76.IFAC,
  author  = {Richalet, JLTJ},
  title   = {Algorithmic control of industrial processes},
  journal = {Proc. of the 4th IFAC Sympo. on Identification and System Parameter Estimation},
  year    = {1976},
  pages   = {1119--1167},
}

@article{mhaskar2005predictive,
  title={Predictive control of switched nonlinear systems with scheduled mode transitions},
  author={Mhaskar, Prashant and El-Farra, Nael H and Christofides, Panagiotis D},
  journal={IEEE Trans.  Aut. Cont.},
  volume={50},
  number={11},
  pages={1670--1680},
  year={2005},
  publisher={IEEE}
}

@TechReport{HybridMPCTR,
  Title                    = {Model Predictive Control of Hybrid Dynamical
Systems},
  Author                   = {R. G. Sanfelice and B. Altin},
  url={ArXiv},
  Year                     = {2026}
}

@article{goebel2019existence,
  title={Existence of optimal controls on hybrid time domains},
  author={Goebel, Rafal},
  journal={Nonlinear Analysis: Hybrid Systems},
  volume={31},
  pages={153--165},
  year={2019},
  publisher={Elsevier}
}

@book {220,
	title = {Hybrid Feedback Control},
	year = {2021},
	publisher = {Princeton University Press},
	organization = {Princeton University Press},
	address = {New Jersey},
	author = {R. G. Sanfelice}
}

@book {65,
	title = {Hybrid Dynamical Systems: Modeling, Stability, and Robustness},
	year = {2012},
	publisher = {Princeton University Press},
	organization = {Princeton University Press},
	doi = {http://www.u.arizona.edu/~sricardo/index.php?n=Main.Books},
	author = {R. Goebel and R. G. Sanfelice and A. R. Teel}
}

@mastersthesis {2,
	title = {Novel current control for {AC} motors},
	year = {2001},
	school = {Universidad Nacional de Mar del Plata},
	url = {https://hybrid.soe.ucsc.edu/files/preprints/2.pdf},
	author = {R. G. Sanfelice}
}

@inbook {154,
	title = {Hybrid Model Predictive Control},
	booktitle = {Handbook of Model Predictive Control},
	year = {2018},
	month = {09/2018},
	pages = {199-220},
	publisher = {Birkh{\"a}user },
	organization = {Birkh{\"a}user },
	edition = {Edition 1},
	address = {Basel},
	issn = {978-3-319-77489-3},
	doi = {10.1007/978-3-319-77489-3},
	author = {R. G. Sanfelice}
}

@article {286,
	title = {Regularity of Optimal Solutions and the Optimal Cost for Hybrid Dynamical Systems via Reachability Analysis},
	journal = {Automatica},
	volume = {152},
	year = {2023},
	month = {March},
	author = {B. Altin and R. G. Sanfelice}
}

@article {209,
	title = {Hybrid Dynamical Systems with Hybrid Inputs: Definition of Solutions and Applications to Interconnections},
	journal = {International Journal of Robust and Nonlinear Control},
	volume = {30},
	year = {2019},
	month = {October},
	pages = {5892{\textendash}5916},
	author = {P. Bernard and R. G. Sanfelice}
}

@article {185,
	title = {Forward Invariance of Sets for Hybrid Dynamical Systems ({Part I})},
	journal = {IEEE Transactions on Automatic Control},
	volume = {64},
	number = {6},
	year = {2019},
	month = {June},
	pages = {2426-2441},
	author = {J. Chai and R. G. Sanfelice}
}

@article {52,
	title = {Interconnections of Hybrid Systems: Some Challenges and Recent Results},
	journal = {Journal of Nonlinear Systems and Applications},
	volume = {2},
	number = {1-2},
	year = {2011},
	pages = {111{\textendash}121},
	doi = {http://jnsaonline.watsci.org/abstract_pdf/2011v2/v2n1-pdf/13.pdf},
	author = {R. G. Sanfelice}
}

@conference {228,
	title = {Model Predictive Control for Hybrid Dynamical Systems: Sufficient Conditions for Asymptotic Stability with Persistent Flows or Jumps},
	booktitle = {Proc. Amer. Cont. Conf.},
	year = {2020},
	month = {July},
	pages = {1791-1796},
	author = {B. Altin and R. G. Sanfelice}
}

@conference {205,
	title = {A Model Predictive Control Framework for Asymptotic Stabilization of Discretized Hybrid Dynamical Systems},
	booktitle = {Proceedings of the 2019 IEEE Conference on Decision and Control},
	year = {2019},
	month = {December},
	pages = {2356 - 2361},
	author = {P. Ojaghi and B. Altin and R. G. Sanfelice}
}

@conference {193,
	title = {Asymptotically Stabilizing Model Predictive Control for Hybrid Dynamical Systems},
	booktitle = {Proceedings of the American Control Conference},
	year = {2019},
	month = {July},
	pages = {3630-3635},
	author = {B. Altin and R. G. Sanfelice}
}

@conference {179,
	title = {A Model Predictive Control Framework for Hybrid Systems},
	booktitle = {Proceedings of the 6th IFAC Conference on Nonlinear Model Predictive Control (NMPC)},
	volume = {51},
	year = {2018},
	month = {August},
	pages = {128-133},
	author = {B. Altin and P. Ojaghi and R. G. Sanfelice}
}

@conference {74,
	title = {A Toolbox for Simulation of Hybrid Systems in {M}atlab/{S}imulink: {H}ybrid {E}quations ({HyEQ}) {T}oolbox},
	booktitle = {Proceedings of Hybrid Systems: Computation and Control Conference},
	year = {2013},
	pages = {101{\textendash}106},
	doi = {http://doi.org/tz9},
	url = {https://hybrid.soe.ucsc.edu/files/preprints/74.pdf},
	author = {R. G. Sanfelice and D. A. Copp and P. Nanez}
}

@conference {4,
	title = {Results on convergence in hybrid systems via detectability and an invariance principle},
	booktitle = {Proc. 24th American Control Conference},
	series = {NULL},
	year = {2005},
	pages = {551{\textendash}556},
	doi = {http://ieeexplore.ieee.org/iel5/9861/31519/01470014.pdf?tp=\&arnumber=1470014\&isnumber=31519},
	url = {https://hybrid.soe.ucsc.edu/files/preprints/4.pdf},
	author = {R. G. Sanfelice and R. Goebel and A.R. Teel}
}

@conference {1,
	title = {Novel current control for {AC} motors with low torque ripple},
	booktitle = {Proc. IX Workshop on Information Processing and Control RPIC},
	series = {NULL},
	year = {2001},
	url = {https://hybrid.soe.ucsc.edu/files/preprints/1.pdf},
	author = {M. Benedetti and J.F. Rovira and R. G. Sanfelice}
}

@STRING{ieeetacz = ieeetacl}

\NotForJournal{
\appendices
\section{Proofs}
	\label{sec:proofs}

\subsection{Proof of Lemma~\ref{prop:Lyaplobound}}

Take $\delta \in (0,\varepsilon)$ such that ${\widetilde{\alpha}_1^{-1}}\circ\widetilde{\alpha}_2(\delta) \leq \varepsilon$.
Let $(x,u)$ be a solution pair to $\HS$ with compact hybrid time domain and terminal time $(T,0)$; hence, it is a continuous solution pair with $\dom (x,u) = [0,T] \times \{0\}$.
Suppose $|x(0,0)|_\A \leq \delta$.
Pick any $s \geq 0$ and the largest interval $[t^1,t^2] \subset [0,T]$ containing $s$ such that $|x(s,0)|_\A \leq \delta$ and $|x(s,0)|_\A \leq |x(0,0)|_\A$ for all $t \in [t^1,t^2]$.
From differentiability of $\widetilde{V}$, the bounds in
\eqref{eq:candidate} and \eqref{eq:lodescent} hold on $\cl(\Pi(C))$.
Then, since by the choice of $\delta$, $|x(s,0)|_\A \leq \varepsilon$ for all $t \in [t^1,t^2]$, using \eqref{eq:candidate} and \eqref{eq:lodescent}, we have
\begin{align*}
\widetilde{\alpha}_2(|x(t,0)|_\A &\geq 
\widetilde{V}(x(t,0)) \geq \exp(\lambda (t-t^1))\widetilde{V}(x(t^1,0))
 \\
& \geq \exp(\lambda (t-t^1)) \widetilde{\alpha}_1(|x(t^1,0)|_\A)
\end{align*}
for all $t \in [t^1,t^2]$,
which implies
\begin{align*}
|x(t,0)|_\A &\geq \widetilde{\alpha}_2^{-1}(\exp(\lambda (t-t^1)) \widetilde{\alpha}_1(|x(t^1,0)|_\A))
\end{align*}
for all $t \in [t^1,t^2]$. Since 
$|x(t^1,0)|_\A  = |x(0,0)|_\A$ by the very definition of the interval $[t^1,t^2]$,
\begin{align*}
|x(t,0)|_\A &\geq \widetilde{\alpha}_2^{-1}(\exp(\lambda (t-t^1)) \widetilde{\alpha}_1(|x(0,0)|_\A))\\
 &\geq \widetilde{\alpha}_2^{-1}(\exp(\lambda (t^2-t^1)) \widetilde{\alpha}_1(|x(0,0)|_\A))\\
& \geq 
\widetilde{\alpha}_2^{-1}(\min\{1,\exp(\lambda T)\}\widetilde{\alpha}_1(|x(0,0)|_\A))
\end{align*}
for all $t \in [t^1,t^2]$.
Note that this property holds for $t$'s on any interval such as $[t^1,t^2]$.
At other $t$'s,  $|x(t,0)|_\A > \delta$.
Then, combining the lower bounds on $|x(t,0)|_\A$ above and the case when $|x(0,0)|_\A > \delta$ gives 
\begin{align*}
|x(t,0)|_\A 
& \geq \min\{
\widetilde{\alpha}_2^{-1}(\min\{1,\exp(\lambda T)\}\widetilde{\alpha}_1(|x(0,0)|_\A)),\delta\}
\end{align*}
for all $(t,0) \in \dom (x,u)$.
Since ${\widetilde{\alpha}_1}$ and ${\widetilde{\alpha}_2}$ are class-$\classK$ functions and $\delta$ is positive, there exists 
class-$\classK$  function $\alpha$ lower bounding $r \mapsto \min\{
\widetilde{\alpha}_2^{-1}(\min\{1,\exp(\lambda T)\}\widetilde{\alpha}_1(r)),\delta\}$.

\subsection{Proof of Lemma~\ref{prop:nice}}

Without loss of generality, suppose that there exists~$c>0$ such that~$\sigma(r)\geq c$ for all~$r>0$. Let
$\varphi$ be given as in \eqref{eqn:varphi}, which is continuous.
Let~$\tau(0):=0$ and~$\tau(r):=r/(2\varphi(r))$ for all~$r\in(0,\varepsilon]$, and note that~$\tau$ is continuous, positive definite, and lower bounds the ordinary time to reach the~$(r/2)$-disc around~$\A$ from the~$r$-disc around~$\A$ via flows for~$r\leq\varepsilon$. Let~$\rho(r):=\min\{r,\varepsilon\}$ and~$\widetilde{\tau}(r):= \tau(\rho(r))$ for all~$r\geq 0$. Observing that~$\Anorm{x(t,0)}>\rho(\Anorm{x(0,0)})/2$ for any continuous solution pair~$(x,u)$ and~$t< \widetilde{\tau}(\Anorm{x(0,0)})$ satisfying~$(t,0)\in\dom(x,u)$, the proof is completed by picking~$\alpha$ to be any class-$\classK$ function lower bounding both $\rho$ and~$\widetilde{\tau}$.

\subsection{Upper Semicontinuous Maps}

\begin{lemma}
\label{lem:cont}
Let~$h:S\to\reals$ be a continuous function, where~$S\subset\reals^n$ is closed. Then, given a compact set~$\A\subset\reals^n$, there exists~$\bar{r}\geq 0$
such that
\begin{equation}
\label{eq:min}
	\bar{r}=\min\{r\in\realsgeq: (\A+\ubar{r}\ball)\cap S\neq\varnothing\}.\
\end{equation}
Moreover, the function~$\psi:[\bar{r},\infty)\to\reals$ given by
\[
	\psi(r):=\max_{x\in\Gamma(r)}h(x) \quad \forall r\in[\bar{r},\infty),
\]
where~$\Gamma:\realsgeq\rightrightarrows S$ is the set-valued mapping defined as~$\Gamma(r)=(\A+r\ball)\cap S$ for all~$r\in\realsgeq$, is upper semicontinuous.
\end{lemma}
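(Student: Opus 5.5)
The plan is a direct compactness argument in two parts: first show the minimum defining $\bar r$ is attained, then show upper semicontinuity of $\psi$ by a sequential argument based on outer semicontinuity and local boundedness of $\Gamma$. Throughout, I assume $S\neq\varnothing$; otherwise the set in \eqref{eq:min} is empty and there is nothing to prove. I also read the $\ubar{r}$ in \eqref{eq:min} as the running variable $r$.

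\emph{Existence of $\bar r$.} I will identify $\bar r$ with the distance between $\A$ and $S$, namely $\bar r=\min_{a\in\A}d_S(a)$, where $d_S(a):=\inf_{s\in S}|a-s|$.
\begin{itemize}
\item For each $a$, the infimum defining $d_S(a)$ is attained because $S$ is closed and nonempty: minimize $|a-\cdot|$ over the compact set $S\cap(a+R\ball)$ for $R$ large.
\item The function $d_S$ is $1$-Lipschitz, hence continuous. It therefore attains its minimum on the compact set $\A$ at some $a^*$, with a corresponding nearest point $s^*\in S$.
\item Consequently $(\A+r\ball)\cap S\neq\varnothing$ if and only if $r\geq |a^*-s^*|$. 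The set of admissible $r$ is thus the closed half-line $[\bar r,\infty)$ with $\bar r:=|a^*-s^*|$, so the minimum exists.
\end{itemize}

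\emph{Well-posedness of $\psi$.} For each $r\geq\bar r$, the set $\Gamma(r)=(\A+r\ball)\cap S$ is nonempty by the previous step. It is compact, being the intersection of the compact set $\A+r\ball$ (with $\A$ compact) and the closed set $S$. Since $h$ is continuous on $S$, the maximum defining $\psi(r)$ is attained, so $\psi$ is real valued on $[\bar r,\infty)$.

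\emph{Upper semicontinuity.} Fix $r\geq\bar r$ and a sequence $r_k\to r$ with $r_k\geq\bar r$. The goal is $\limsup_k\psi(r_k)\leq\psi(r)$.
\begin{itemize}
\item Pass to a subsequence along which $\psi(r_k)$ converges to the $\limsup$.
\item Choose maximizers $x_k\in\Gamma(r_k)$, so that $h(x_k)=\psi(r_k)$.
\item Since $r_k\leq r+1$ eventually, every $x_k$ lies in the compact set $\A+(r+1)\ball$. Extract a further subsequence with $x_k\to x$.
\item Closedness of $S$ gives $x\in S$. From $\Anorm{x_k}\leq r_k$ and continuity of $\Anorm{\cdot}$ we get $\Anorm{x}\leq r$. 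Because $\A$ is compact, this means $x\in\A+r\ball$, hence $x\in\Gamma(r)$.
\item Continuity of $h$ on $S$ then yields $\limsup_k\psi(r_k)=\lim_k h(x_k)=h(x)\leq\psi(r)$.
\end{itemize}
Equivalently, this is Berge's theorem applied to the locally bounded, outer semicontinuous map $\Gamma$ with compact values, in the spirit of \cite[Lemma~5.15]{65}.

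The only step needing care is the sequential closedness of $\Gamma$, i.e., that limits of points in $\Gamma(r_k)$ lie in $\Gamma(r)$. This rests on $\Anorm{\cdot}$ being continuous and on $\A+r\ball=\{x:\Anorm{x}\leq r\}$ when $\A$ is compact, which holds because the infimum defining $\Anorm{x}$ is attained. Lower semicontinuity of $\psi$ is not claimed, and it may fail when $S$ is not convex, so no more than the inequality above will be proved.
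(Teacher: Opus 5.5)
Your proof is correct and follows essentially the paper's route. You get $\bar r$ by compactness, minimizing $d_S$ over $\A$, whereas the paper minimizes $\Anorm{\cdot}$ over the compact set $(\A+\delta\ball)\cap S$. You get upper semicontinuity from $\Gamma$ having compact values, a closed graph and local boundedness, checked directly by a sequential extraction argument where the paper cites Berge's maximum theorem together with Lemmas~5.10 and~5.15 of \cite{65}. One small correction: the case $S=\varnothing$ is not vacuous, since then no $\bar r$ exists, so $S\neq\varnothing$ (and $\A\neq\varnothing$) is an implicit hypothesis of the lemma.
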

\begin{IEEEproof}
	Take any~$x\in S$ and let~$\delta:=\Anorm{x}$. Since~$\A$ is compact, there exists~$a\in\A$ such that~$|x-a|=\delta$. Hence,~$(\A+\delta\ball)\cap S$ is nonempty. Consequently, the infimum of the continuous function~$x\mapsto\Anorm{x}$ on~$(\A+\delta\ball)\cap S$ is attained, so~\eqref{eq:min} holds for some~$\bar{r}\geq 0$.

Now, let~$h':\realsgeq\times S\to\reals$ be such that~$h'(r,x)=h(x)$ for all~$(r,x)\in\realsgeq\times S$. Then,
\[
	\psi(r)=\max_{x\in\Gamma(r)}h'(r,x) \quad \forall r\in[\bar{r},\infty).
\]
Since~$\realsgeq\times S\supset \gph \Gamma$,~$\Gamma$ is compact-valued~(as~$\A$ is compact and~$S$ is closed), and~$h'$ is continuous, by Berge's maximum theorem~\cite[Theorem~1.4.16]{aubin},~$\psi$ is upper semicontinuous if~$\Gamma$ is upper semicontinuous. To show upper semicontinuity of~$\Gamma$, let~$\Gamma':\realsgeq\rightrightarrows \reals^n$ be the set-valued mapping such that~$\Gamma'(r)=\varnothing$ for all~$r<0$, and~$\Gamma'(r):=(\A+r\ball)$ for all~$r\geq 0$. Then, since~$\Gamma'$ is upper semicontinuous and~$\Gamma'(r)$ is closed for all~$r\in\reals$, by~\cite[Lemma~5.15]{65},~$\Gamma'$ is outer semicontinuous. Moreover,~\cite[Lemma~5.10]{65} implies that~$\gph \Gamma'$ is closed, so~$\gph \Gamma=\gph \Gamma'\cap (\realsgeq\times S)$ is closed. Therefore,~$\Gamma$ is upper semicontinuous, again as a result of Lemmas~5.10 and~5.15 of~\cite{65}, and consequently,~$\psi$ is upper semicontinuous.
\end{IEEEproof}

}

\begin{IEEEbiography}[{\includegraphics[width=1in,height=1.25in,clip,keepaspectratio]{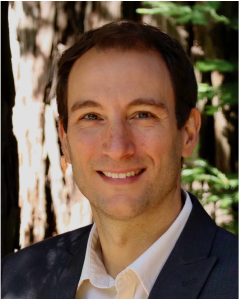}}]{Ricardo G. Sanfelice}
(SM'01, F'22) received the B.S. degree in Electronics Engineering from the Universidad de Mar del Plata, Buenos Aires, Argentina, in 2001, and the M.S. and Ph.D. degrees in Electrical and Computer Engineering from the University of California, Santa Barbara, in 2004 and 2007, respectively. In 2007 and 2008, he held postdoctoral positions at the Laboratory for Information and Decision Systems at the Massachusetts Institute of Technology and at the Centre Automatique et Systemes at the Ecole de Mines de Paris. In 2009, he joined the faculty of the Department of Aerospace and Mechanical Engineering at the University of Arizona, Tucson, where he was an Assistant Professor. In 2014, he joined the University of California, Santa Cruz, where he is currently Professor and Chair in the Department of Electrical and Computer Engineering. Prof. Sanfelice is the recipient of the 2013 SIAM Control and Systems Theory Prize, the National Science Foundation CAREER award, the Air Force Young Investigator Research Award, the 2010 IEEE Control Systems Magazine Outstanding Paper Award, and the 2020 Test-of-Time Award from the Hybrid Systems: Computation and Control Conference. He is Associate Editor for Automatica, Communicating Editor for the Journal of Nonlinear Science, and a Fellow of the IEEE. His research interests are in modeling, stability, robust control, observer design, and simulation of nonlinear and hybrid systems with applications to power systems, aerospace, and biology.
\end{IEEEbiography}
\begin{IEEEbiography}[{\includegraphics[width=1in,height=1.25in,clip,keepaspectratio]{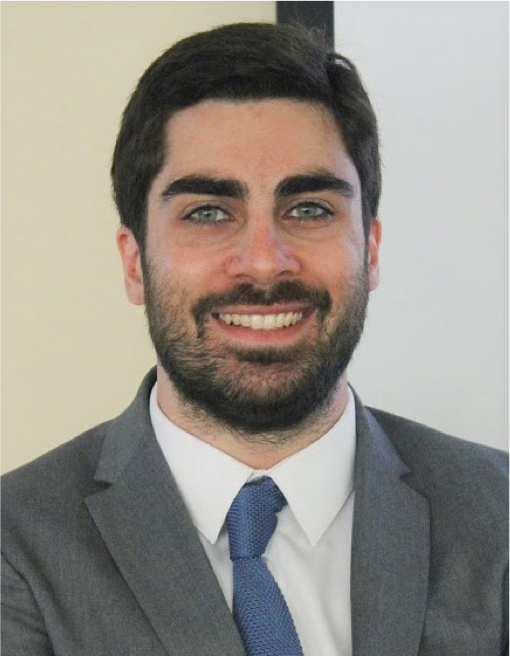}}]{Berk Alt{\i}n}
(M'17) received the B.S. degree in mechatronics from Sabanci University, Istanbul, Turkey, in 2011, and the M.S. and Ph.D. degrees in electrical engineering: systems and the M.S. degree in mathematics from the University of Michigan, Ann Arbor, MI, USA, in 2013, 2016, and 2016, respectively. From 2011 to 2016, he was a Fulbright Fellow with the University of Michigan. In 2016, he joined the Hybrid Systems Laboratory, University of California, Santa Cruz, CA, USA, as a Post-Doctoral Researcher. Since 2021, he has been working in the autonomous vehicles industry as a Technical Lead and Planning and Control Software Engineer. His research interests include hybrid systems, model predictive control, iterative learning control, repetitive processes, and multidimensional systems, with applications in cyber-physical systems, power systems, robotics, and additive manufacturing.
\end{IEEEbiography}
\vfill

\end{document}